\documentclass[reqno]{amsart}

\usepackage{enumerate}
\usepackage{hyperref}

\numberwithin{equation}{section}

\newtheorem{thm}{Theorem}[section]
\newtheorem{lem}[thm]{Lemma}
\newtheorem{prop}[thm]{Proposition}
\newtheorem{cor}[thm]{Corollary}

\theoremstyle{definition}
\newtheorem{rem}[thm]{Remark}
\newtheorem{claim}[thm]{Claim}

\newcommand\R{\mathbb {R}}
\newcommand\N{\mathbb {N}}

\newcommand\DI{\varphi }
\newcommand\Pwr{{\alpha +1}}
\newcommand\Pmu{{\alpha }}
\newcommand\Ppu{{\alpha +2}}
\newcommand\Swr{{\boldsymbol{s} }}
\newcommand\Awr{{\boldsymbol{a}}}
\newcommand\Rwr{{\boldsymbol{r}}}
\newcommand\Bwr{{\boldsymbol{b}}}
\newcommand\Qwr{{\boldsymbol{q}}}
\newcommand\Gwr{{\boldsymbol{\gamma }}}
\newcommand\Lwr{{\boldsymbol{\lambda }}}
\newcommand\Siwr{{\boldsymbol{\sigma}}}
\newcommand\Dim{{N}}
\newcommand\Cgn{{C _{ \mathrm {GN} }}}
\newcommand\Dsd{{\delta _{ \mathrm {sd} }}}
\newcommand\Csd{{C _{ \mathrm {sd} }}}
\newcommand\Fcr{{\varphi  _{ \mathrm {crit} }}}
\newcommand\Ucr{{u _{ \mathrm {crit} }}}
\newcommand\Step[1]{\noindent {\it Step~#1.}\quad}
\newcommand\Srn{{\mathcal S}(\R^\Dim )}

\newcommand\Es[1]{e^{i #1 \Delta }}
\newcommand\Esm[1]{e^{-i #1 \Delta }}

\newcommand\Inv{{\mathcal K}}
\newcommand\Nonad{{\mathcal L}}

\newcommand\Tma{T_{\mathrm{max}}}
\newcommand\im{{\mathrm{Im}}\,}

\newcommand\goto{\mathop{\longrightarrow}}
\newcommand\union{\mathop{\cup}}
\newcommand\weakcv{\mathop{\rightharpoonup}}
\newcommand\Comp{{\mathrm{c}}}

\newcommand\Sol{{\boldsymbol{\mathcal S}}}

\begin{document}

\title{Scattering for the focusing energy-subcritical NLS}

\author[Daoyuan Fang]{Daoyuan Fang$^{1}$}

\author[Jian Xie]{Jian Xie$^{1}$}

\author[Thierry Cazenave]{Thierry Cazenave$^{2}$}

\address{$^{1,2}$
Department of Mathematics, Zhejiang University, Hangzhou, Zhejiang 310027, People's Republic of China}

\curraddr[Thierry Cazenave]{{\sc Universit\'e Pierre et Marie Curie \& CNRS, Laboratoire Jacques-Louis Lions,
B.C.~187, 4 place Jussieu, 75252 Paris Cedex 05, France}}

\email[Daoyuan Fang]{\href{mailto:dyf@zju.edu.cn}{dyf@zju.edu.cn}}
\email[Jian Xie]{\href{mailto:sword711@gmail.com}{sword711@gmail.com}}
\email[Thierry Cazenave]{\href{mailto:thierry.cazenave@upmc.fr}{thierry.cazenave@upmc.fr}}

\thanks{$^1$Research supported by NSFC 10871175, 10931007, and Zhejiang NSFC Z6100217}
\thanks{$^2$Research supported in part by Zhejiang University's Pao Yu-Kong
International Fund}

\subjclass [2010]{35Q41, 35B40}

\keywords{Nonlinear Schr\"o\-din\-ger equation, scattering}

\begin{abstract}
For the 3D focusing cubic nonlinear Schr\"o\-din\-ger  equation,
Scattering of $H^1$ solutions  inside the (scale invariant) potential well was established by Holmer and Roudenko~\cite{HR2} (radial case) and
Duyckaerts, Holmer and Roudenko~\cite{DHR} (general case).  In this paper, we
extend this result to arbitrary space dimensions and focusing, mass-supercritical and energy-subcritical  power nonlinearities, by adapting the method of~\cite{DHR}.
\end{abstract}

\maketitle
\section{ Introduction }
In this paper, we study the  Cauchy problem for the
focusing, mass supercritical and energy subcritical nonlinear Schr\"o\-din\-ger
equation in $\R^\Dim $, $\Dim \ge 1$
\begin{equation} \label{SCH} \tag{NLS}
\begin{cases}
iu_t + \Delta  u + |u|^{\Pmu }u = 0,\\
u(0) = \DI ,
\end{cases}
\end{equation}
where
\begin{equation} \label{fPwr}
\frac {4} {\Dim} < \Pmu <
\begin{cases}
\displaystyle \frac {4} {\Dim -2}&  \text{if } \Dim \ge 3, \\ \infty &  \text{if }  \Dim =1,2.
\end{cases}
\end{equation}
As is well-known, the Cauchy problem~\eqref{SCH} is locally well-posed  in $H^1(\R^\Dim )$.
(See e.g.~\cite{GinibreVu,Katod}.) More precisely, given $\DI  \in H^1(\R^\Dim )$, there exists $T>0$ and a unique solution $u\in C([0,T], H^1(\R^\Dim ))$ of~\eqref{SCH}.  This solution can be extended to a maximal existence interval $[0, \Tma)$, and either $\Tma=\infty $ (global solution) or else $\Tma <\infty $ and $ \|u(t)\| _{ H^1 }\to \infty $ as $t\uparrow \Tma$ (finite time blowup).
Moreover, the mass $M(u(t))$, energy $E(u(t))$ and momentum $P(u(t))$ are independent of $t$, where
\begin{align}
M(w) &= \int  _{ \R^\Dim } |w(x)|^2 dx, \label{fMass}  \\
E(w) &= \frac {1} {2} \int  _{ \R^\Dim } |\nabla w(x)|^2 dx- \frac {1} {\Ppu } \int  _{ \R^\Dim } | w(x)|^{
\Ppu } dx, \label{fEner} \\
P(w) &=\im  \int  _{ \R^\Dim }  \overline{w} (x)\nabla w(x)\, dx, \label{fMom}
\end{align}
for all $w\in H^1(\R^\Dim )$.
Throughout this paper, we will only consider solutions in the above sense ($H^1$ solutions).

In the defocusing case, i.e. when the nonlinearity $ |u|^{p-1}u$ in~\eqref{SCH} is replaced by  $- |u|^{p-1}u$, it is well-known~\cite{GinibreVd, Nakanishi} that all solutions scatter as $t\to \infty $,
i.e. there exists a scattering state $u^+\in H^1 (\R^\Dim ) $ such that
\begin{equation} \label{fDScat}
 \|u(t) - \Es{t}  u^+\| _{ H^1 } \goto  _{ t\to \infty  } 0.
\end{equation}
In the focusing case, the situation is much richer: solutions with small initial values scatter~\cite[Theorem~17]{Straussd}, but some solutions blow up in finite time~\cite{Glassey},
 and some solutions are global and do not scatter. A typical example of solutions of the latter type are standing waves, i.e. solutions of the form $u(t,x)= e^{i\omega t} \varphi (x)$. When $\omega >0$ such solutions exist~\cite{Straussu} and correspond to $\varphi (x)= \omega ^{\frac {1} {\alpha }} \psi ( \omega ^{\frac {1} {2}} x)$ where $\psi \in H^1(\R^\Dim )$ is a nontrivial solution
 of the elliptic problem
 \begin{equation} \label{fGrSt}
-\Delta w+w=  |w|^{\Pmu }w.
\end{equation}
Standing waves of particular interest are given by the ground states, i.e. the solutions of~\eqref{fGrSt}  which minimize the energy~\eqref{fEner} among all nontrivial $H^1$ solutions of~\eqref{fGrSt}. Ground states exist and are all of the form $e^{i\theta } Q(\cdot -y)$ where $\theta \in \R$, $y\in \R^\Dim $ and $Q$ is the unique, positive and radially decreasing
solution of~\eqref{fGrSt}. Moreover, $E(Q)>0$. (See~\cite{BerestyckiL, GidasNN, Kwong}.)

No necessary and sufficient condition on the initial value $\DI $ is known for the solution $u$ of~\eqref{SCH} to be global or blow up in finite time.
 Holmer and Roudenko~\cite[Theorem~2.1]{HR1} have obtained the following  sufficient condition.
Let
\begin{equation} \label{fSig}
\Siwr = \frac {4-(\Dim-2)\Pmu } {\Dim \Pmu -4} >0,
\end{equation}
and set
\begin{multline} \label{fDfInv}
\Inv = \{ u\in H^1 (\R^\Dim );\,
E(u) M(u)^ \Siwr < E(Q) M(Q)^\Siwr \\ \text{ and } \|\nabla u\| _{ L^2 } \|u\| _{ L^2 }^ \Siwr <
 \|\nabla Q\| _{ L^2 } \|Q\| _{ L^2 }^ \Siwr  \},
\end{multline}
where $Q$ is the ground state defined above.
It follows that $\Inv $ is invariant by the flow of~\eqref{SCH} and  that
every initial value $\DI \in \Inv$ produces a global solution of~\eqref{SCH} which is bounded in $H^1 (\R^\Dim )$ as $t\to \infty $. This condition is optimal in the sense that if
$E(\DI ) M(\DI )^ \Siwr < E(Q) M(Q)^ \Siwr $ and $ \|\nabla \DI \| _{ L^2 } \|\DI \| _{ L^2 }^ \Siwr >
 \|\nabla Q\| _{ L^2 } \|Q\| _{ L^2 }^ \Siwr $, then the resulting solution of~\eqref{SCH} blows up in finite time provided $ |\cdot |\DI \in L^2(\R^\Dim )$. (This observation applies for example to $\DI  (\cdot )= \lambda ^{\frac {2} {\Pmu }} Q(\lambda \cdot )$ with $\lambda >1$.)
 Note that $\Inv $ is the scale-invariant version of the potential well constructed in~\cite{BerestyckiC},
 with respect to the natural scaling of the equation
 \begin{equation}\label{scaling}
u_\lambda = \lambda^\frac{2 }{\Pmu } u(\lambda^2 t, \lambda x).
\end{equation}
Since $\Inv $ contains a neighborhood of $0$, and solutions with small initial values scatter, it is natural to ask whether or not solutions with initial values in $\Inv $ also scatter.
A positive answer to a similar question in the energy-critical case $\Pmu = \frac {4} {\Dim -2}$ has been given  by Kenig and Merle~\cite{KenigM} for radial solutions and $\Dim= 3,4,5$. (That result was extended by Killip and Visan~\cite{KillipV} to general solutions for $\Dim \ge 5$.)
For the cubic 3D equation ($\Dim=\Pwr = 3$), a positive answer is given in~\cite{HR2} (radial case) then in~\cite{DHR} (general case). In this paper, we extend the result of~\cite{DHR} to arbitrary $\Dim \ge 1$ and $\Pmu $ satisfying~\eqref{fPwr}. Our main result is the following.

\begin{thm}\label{main}
Let $\Dim \ge 1$, assume~\eqref{fPwr} and let $\Inv $ be defined by~\eqref{fDfInv}.
Given any $\DI \in \Inv $, it follows that the corresponding solution $u$ of~\eqref{SCH} is global and scatters, i.e. there exists a scattering state $u^+\in H^1 (\R^\Dim ) $ such that~\eqref{fDScat} holds.
\end{thm}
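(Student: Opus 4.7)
The plan is to follow the Kenig--Merle concentration-compactness/rigidity roadmap as adapted by Duyckaerts, Holmer and Roudenko~\cite{DHR} to the mass-supercritical, energy-subcritical focusing regime, and to extend each step to arbitrary dimension and the full range~\eqref{fPwr}. Scattering of a solution $u$ with $\DI \in \Inv$ is equivalent to a global Strichartz bound at the scaling-critical regularity $s_c = \frac{\Dim }{2} - \frac{2}{\Pmu }$, so it suffices to show such a bound holds for every $\DI \in \Inv$. The variational characterization of $\Inv$ via the sharp Gagliardo--Nirenberg inequality (attained by $Q$) gives the key coercivity estimate: for every $\DI \in \Inv$ there exists $\delta = \delta (\DI )>0$ such that
\begin{equation*}
\|\nabla u(t)\| _{ L^2 } \|u(t)\| _{ L^2 }^ \Siwr \le (1-\delta ) \|\nabla Q\| _{ L^2 } \|Q\| _{ L^2 }^ \Siwr
\end{equation*}
for every $t$ in the maximal interval. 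Combined with the small-data Strichartz theory, this already yields scattering for $\DI \in \Inv$ with $E(\DI )M(\DI )^ \Siwr $ sufficiently small.

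I would then argue by contradiction, defining the critical threshold
\begin{equation*}
\mathcal E _c = \sup \Bigl\{ \eta \ge 0 : \text{every } \DI \in \Inv \text{ with } E(\DI ) M(\DI )^ \Siwr < \eta \text{ scatters} \Bigr\},
\end{equation*}
and assuming $\mathcal E _c < E(Q) M(Q)^ \Siwr $. A linear profile decomposition in $H^1(\R^\Dim )$ in the spirit of Keraani, coupled with a long-time perturbation lemma for~\eqref{SCH}, produces a minimal non-scattering solution $u_c \in C(\R, H^1(\R^\Dim ))$ with $E(u_c) M(u_c)^ \Siwr = \mathcal E _c$ whose orbit is precompact in $H^1$ modulo spatial translations: there is a continuous path $x(t) \in \R^\Dim $ such that $\{ u_c(t,\cdot - x(t)) : t \in \R \}$ is relatively compact in $H^1(\R^\Dim )$. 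In particular, conservation of mass and precompactness prevent any escape of mass to spatial infinity around the center $x(t)$.

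The rigidity step then eliminates such a critical element. Using Galilean invariance I normalize $P(u_c) = 0$ as in~\cite{DHR}; precompactness and vanishing momentum then force $ |x(t)|/t \to 0$ as $t\to \infty $, so a localized virial argument closes. Concretely, applying a Morawetz identity to $\int \chi _R (x) |u_c(t,x)|^2\, dx$ with a smooth cutoff $\chi _R$ supported in $ |x|\le R$ and invoking the coercivity
\begin{equation*}
8 \|\nabla u_c(t)\| _{ L^2 }^2 - \frac{4\Dim \Pmu }{\Ppu } \|u_c(t)\| _{ L^ \Ppu }^ \Ppu \le -\delta _0 <0
\end{equation*}
valid throughout $\Inv$, one forces the second derivative of the virial quantity to stay bounded above by $-\delta _0 /2$ for $R$ large, contradicting boundedness of the first derivative. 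This yields $u_c \equiv 0$, whence $\mathcal E _c \ge E(Q) M(Q)^ \Siwr $.

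The main obstacle will be the profile decomposition and the attendant nonlinear analysis in the full range~\eqref{fPwr}. The arguments of~\cite{DHR} exploit the algebraic cubic nonlinearity in $\Dim =3$; for general $\Pmu $, the nonlinear profile construction, the stability theory, and the Strichartz estimates must be carried out at the critical regularity $s_c \in (0,1)$, which requires fractional chain rules, Strichartz pairs adapted to $\dot H^{s_c}$, and careful book-keeping of the exponents governed by $\Siwr $ in~\eqref{fSig}. Once these analytic ingredients are in place, the variational and virial rigidity steps go through essentially as in the 3D cubic case, completing the proof.
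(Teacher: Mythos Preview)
Your roadmap matches the paper's exactly: small-data scattering, a critical threshold defined via $E(u)M(u)^\Siwr$, profile decomposition plus long-time perturbation to extract a minimal non-scattering solution with precompact orbit modulo translations, Galilean normalization to $P=0$, sublinear growth of $x(t)$, and a localized virial contradiction. However, your rigidity step contains a sign error that makes the argument as written fail. Inside $\Inv$ the coercivity goes the \emph{other} way:
\begin{equation*}
8\|\nabla u\|_{L^2}^2 - \frac{4\Dim \Pmu}{\Ppu }\|u\|_{L^{\Ppu }}^{\Ppu } \;\ge\; 8\Bigl[1-\Bigl(\tfrac{E(u)M(u)^\Siwr}{E(Q)M(Q)^\Siwr}\Bigr)^{\frac{\Dim\Pmu-4}{4}}\Bigr]\|\nabla u\|_{L^2}^2 \;>\;0,
\end{equation*}
not $\le -\delta_0$; the negative sign you wrote holds in the complementary (blow-up) region $\|\nabla u\|_{L^2}\|u\|_{L^2}^\Siwr > \|\nabla Q\|_{L^2}\|Q\|_{L^2}^\Siwr$. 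Consequently the localized virial satisfies $Z_R''(t)\ge \eta E(\DI)>0$, and the contradiction is obtained by integrating this lower bound over $[t_0,\tau]$ against the linear-in-$R$ bound $|Z_R'|\le CR$, with $R=R_\tau$ chosen of the form $\rho + c\tau$ (this is where the sublinear growth of $x(t)$ is used). Your stated mechanism, ``second derivative bounded above by $-\delta_0/2$'', is the blow-up virial and does not apply here.

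A secondary remark: the technical obstacle you anticipate---fractional chain rules at regularity $\Swr$---is not actually needed, and the paper avoids it. The perturbation theory (Proposition~\ref{LTPT}) and the nonlinear profile analysis are run entirely in the space $L^{\Awr}_t L^{\Rwr}_x$ using the non-admissible Strichartz estimate~\eqref{KATO1} and the elementary pointwise bound $\bigl||a|^{\Pmu}a-|b|^{\Pmu}b\bigr|\le C(|a|^{\Pmu}+|b|^{\Pmu})|a-b|$; no derivatives are placed on the nonlinearity. The critical Sobolev norm $\dot H^{\Swr}$ enters only through the single linear Strichartz bound $\|e^{i\cdot\Delta}\DI\|_{L^{\Awr}(\R,L^{\Rwr})}\le C\|\DI\|_{\dot H^{\Swr}}$ used for the small-data step.
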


The conclusion of Theorem~\ref{main} is that for all initial value in $\Inv $, the corresponding solution of~\eqref{SCH} is global and scatters as $t\to \infty $. We note that for every $\DI \in \Inv $, the solution is also global for negative times and scatters as $t\to -\infty $. This follows from the fact that $\Inv $ is invariant by complex conjugation and that if $u (t)$ is a solution of~\eqref{SCH} for $t\ge 0$, then $ \overline{u} (-t) $ is a solution for $t\le 0$ (with the initial value $ \overline{\DI } $).

The rest of the paper is organized as follows. In Section~\ref{sProof}, we give the sketch of the proof of Theorem~\ref{main}, assuming all the technical results. In Section~\ref{sEnergy} we recall some energy inequalities related to the set $\Inv$ and in Section~\ref{sLocal} we recall some results on the Cauchy problem~\eqref{SCH}.
Section~\ref{sProfile} is devoted to a profile decomposition theorem and
Section~\ref{sCritSol} to the construction of a particular ``critical" solution.
Finally, we prove in Section~\ref{sRigidity} a rigidity theorem.
The appendix (Section~\ref{sGron}) contains a Gronwall-type lemma, which we use in Section~\ref{sLocal}.

\medskip

 {\bf Notation:}\quad Throughout this paper, we use the following notation. We denote by $p'$ the conjugate of the exponent $p\in [1,\infty ]$ defined by $\frac {1} {p} +\frac {1} {p'}=1$.
We will use the (complex valued) Lebesgue space $L^p (\R^\Dim ) $ (with norm $ \|\cdot \| _{ L^p }$) and Sobolev spaces $H^s (\R^\Dim ) $, $\dot H^s (\R^\Dim ) $ (with respective norms $ \|\cdot \| _{ H^s }$ and $ \|\cdot \| _{ \dot H^s }$). Given any interval $I\subset \R$, the norm of the mixed space $ L^a(I,L^b(\R^\Dim )) $ is denoted by  $ \|\cdot \| _{ L^a(I,L^b) }$.
We denote by  $(\Es{t}) _{ t\in \R }$ the Schr\"o\-din\-ger group, which is isometric on $H^s$ and $\dot H^s$ for every $s\ge 0$.
We will use freely the well-known properties of $(\Es{t}) _{ t\in \R }$. (See e.g. Chapter~2 of~\cite{Css} for an account of these properties.)
We define  the following set of indices, depending only on $\Dim$ and $\Pmu $,
\begin{gather*}
\Awr =\frac {2 \Pmu (\Ppu )} {4 - (\Dim  - 2) \Pmu  }, \quad \Bwr  = \frac {2 \Pmu (\Ppu )} {\Dim \Pmu ^2 + (\Dim  - 2)\Pmu  - 4},
\quad \Gwr =\frac {2(\Dim +2)} {\Dim},\\
\Qwr = \frac {4 (\Ppu )} {\Dim \Pmu }, \quad
\Rwr  =\Ppu ,\quad
\Swr  = \frac{\Dim }{2} - \frac{2}{\Pmu } \in  \Bigl( 0, \min \Bigl\{1, \frac {N} {2} \Bigr\} \Bigr),
\end{gather*}
and we note for further use that $(\Pwr )\Rwr '= \Rwr,$ and $ (\Pwr ) \Bwr '= \Awr$.

\section{Sketch of the proof of Theorem~\ref{main}} \label{sProof}
In this short section, we give the proof of Theorem~\ref{main}, assuming all preliminary results.
Our proof is adapted from the proof of~\cite{DHR} concerning the cubic 3D equation, but follows more closely the proof of~\cite{KenigM} of the energy-critical case.
Note, however, that the argument in the present situation is considerably simpler than the argument in~\cite{KenigM} for at least two reasons.
The equation~\eqref{SCH} is energy subcritical;
and  we consider solutions not only with finite energy, but also with finite mass.
This restriction is essential so that the set $\Inv$ makes sense.
The main ingredients are a profile decomposition (Theorem~\ref{ProfileE}) and a Liouville-type  (Theorem~\ref{Rigidity}) result, and the construction of a ``critical" solution (Proposition~\ref{critical}). These results, as well as all other technical results, are stated and proved in the following sections.

We define
\begin{equation} \label{fProofd}
\Nonad= \{ \DI \in \Inv ;\, u\in L^\Awr ((0,\infty ), L^\Rwr (\R^\Dim )) \},
\end{equation}
where $u$ is  the solution of~\eqref{SCH} with initial value $\DI $.
It is well-known (see Proposition~\ref{eBasic}) that if $\DI \in \Nonad$, then  $u$ scatters.
Therefore, we need only show that $\Inv = \Nonad$.

Given $0<\omega < 1$, we set
\begin{equation} \label{fProoft}
\Inv  _{ \omega  }= \{ u\in \Inv;\,  E(u) M(u)^ \Siwr \le  \omega  E(Q) M(Q)^\Siwr\}.
\end{equation}
In particular, $\Inv =  \union _{ 0<\omega <1 } \Inv _{ \omega  }$.
Since by~\eqref{fCdeltad}, $\|\nabla \DI \| _{ L^2 }  \|\DI \| _{ L^2 }^\Siwr \le \omega ^{\frac {1} {2}} \|\nabla Q\| _{ L^2 }  \|Q\| _{ L^2 }^\Siwr $ for all $\DI \in \Inv  _{ \omega  }$, we deduce from the elementary interpolation inequality $ \| \DI \| _{ \dot H^\Swr }^{1+ \Siwr} \le  \|\nabla \DI \| _{ L^2 }  \|\DI \| _{ L^2 }^\Siwr$ that $ \| \DI \| _{ \dot H^\Swr }^{1+ \Siwr} \le \omega ^{\frac {1} {2}} \|\nabla Q\| _{ L^2 }  \|Q\| _{ L^2 }^\Siwr $ for all $\DI \in \Inv  _{ \omega  }$.
Applying Proposition ~\ref{small}, we conclude that $\Inv  _{ \omega  }\subset \Nonad$ for all sufficiently small $\omega >0$.
Therefore, we may define the number
\begin{equation} \label{fProofq}
\omega _0= \sup \{ \omega \in (0,1);\, \Inv  _{ \omega  }\subset \Nonad \} \in (0,1],
\end{equation}
and we need only show that $\omega _0=1$.

We argue by contradiction and assume $0<\omega _0<1$.
The first crucial step of the proof (Proposition~\ref{critical}), which is based on the profile decomposition theorem,  shows that there exists an initial value $\Fcr \in \Inv _{ \omega _0 }$ which is not in $\Nonad$.
Moreover, the corresponding solution $\Ucr $ of~\eqref{SCH} has the following compactness property: there exists a function $x\in C([0,\infty ), \R^\Dim )$ such that $\union  _{ t\ge 0 } \{ \Ucr (t, \cdot -x(t)) \}$ is relatively compact in $H^1 (\R^\Dim ) $.
It follows (see Lemma~\ref{eRCpu}) that
\begin{equation*}
\sup  _{ t\ge 0 } \int  _{ \{  |x+x(t)|>R \} }\{  |\nabla \Ucr (t,x)|^2 +  | \Ucr (t,x)|^{\Ppu } + | \Ucr (t,x)|^2 \}
\goto  _{ R\to \infty  }0.
\end{equation*}
The second crucial step (Theorem~\ref{Rigidity}) shows that necessarily $\Fcr =0$.
Since $\Fcr \not \in  \Nonad$, we obtain a contradiction. Thus $\omega _0= 1$ and the proof is complete.

\section{Energy inequalities} \label{sEnergy}
In this section, we collect certain energy inequalities that are related to the set $\Inv$.
We recall that the best constant in the Gagliardo-Nirenberg inequality
\begin{equation}\label{GN}
\| f \|_{L^{\Ppu }}^{\Ppu } \le \Cgn \| f \|_{L^2}^{\frac{ 4- (\Dim  - 2) \Pmu  }{2}}\| \nabla  f \|_{L^2}^\frac{\Dim \Pmu }{2},
\end{equation}
is given by
\begin{equation}\label{CGN}
\Cgn =\frac{\| Q \|_{L^{\Ppu }}^{\Ppu }}{\| Q \|_{L^2}^{\frac{ 4- (\Dim  - 2) \Pmu  }{2}}\| \nabla  Q \|_{L^2}^\frac{\Dim \Pmu }{2}},
\end{equation}
where $Q$ is the ground state of~\eqref{fGrSt}. (See Corollary~2.1
in~\cite{Wei}.) We also recall that by Pohozaev's identity,
\begin{equation}\label{Q}
\begin{split}
\| Q \|_{L^2}^2 & = \frac {4- (\Dim -2) \Pmu } {\Dim \Pmu } \|\nabla Q \|_{L^2}^2
=\frac {4- (\Dim -2) \Pmu } {2(\Ppu )} \| Q \|_{L^{\Ppu }}^{\Ppu }
\\ & = \frac {8- 2(\Dim -2) \Pmu } {\Dim \Pmu -4}  E(Q),
\end{split}
\end{equation}
(see e.g. Corollary~8.1.3 in~\cite{Css}) and we note that by~\eqref{CGN} and~\eqref{Q}
\begin{equation} \label{fCGNb}
\Cgn =
 \frac {2(\Ppu )}  { \Dim \Pmu }
  [\| \nabla  Q \|_{L^2} \| Q \|_{L^2}^\Siwr]^{- \frac {\Dim \Pmu  -4} {2}}.
\end{equation}

\begin{lem}\label{Cdelta}
If $u\in H^1 (\R^\Dim ) $ and $ \|\nabla u\| _{ L^2 }  \|u\| _{ L^2 }^\Siwr \le  \|\nabla Q\| _{ L^2 }  \|Q\| _{ L^2 }^\Siwr$, then
\begin{gather}
E(u)\ge  \frac {\Dim \Pmu  -4} {2 \Dim \Pmu } \|\nabla u\| _{ L^2 }^2,  \label{fCdeltau} \\
\|\nabla u\| _{ L^2 }  \|u\| _{ L^2 }^\Siwr \le   \Bigl( \frac {E(u)M(u)^\Siwr } {E(Q)M(Q)^\Siwr } \Bigr)^{ \frac {1} {2} }  \|\nabla Q\| _{ L^2 }  \|Q\| _{ L^2 }^\Siwr, \label{fCdeltad} \\
8\| \nabla  u \|_{L^2}^2 - \frac{4 \Dim \Pmu }{\Ppu }\| u \|_{L^{\Ppu }}^{\Ppu }
\ge 8  \Bigl[ 1 -   \Bigl( \frac {E(u)M(u)^\Siwr } {E(Q)M(Q)^\Siwr } \Bigr)^{ \frac {\Dim \Pmu  -4} {4} }  \Bigr] \|\nabla u\| _{ L^2 }^2. \label{fCdeltat}
\end{gather}
In particular, the above estimates hold for all $u\in \Inv$,
where $\Inv $ is defined by~\eqref{fDfInv}.
\end{lem}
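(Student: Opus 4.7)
The plan is to derive all three bounds from a single, explicit form of the sharp Gagliardo--Nirenberg inequality. First, substituting~\eqref{fCGNb} for $\Cgn $ into~\eqref{GN} and peeling off a factor of $\|\nabla u\|_{L^2}^2$ from $\|\nabla u\|_{L^2}^{\Dim \Pmu /2}$ yields the master estimate
\[
\frac{1}{\Ppu }\|u\|_{L^\Ppu }^\Ppu  \le \frac{2}{\Dim \Pmu }\left(\frac{\|\nabla u\|_{L^2}\|u\|_{L^2}^\Siwr }{\|\nabla Q\|_{L^2}\|Q\|_{L^2}^\Siwr }\right)^{\!\!(\Dim \Pmu -4)/2}\|\nabla u\|_{L^2}^2.
\]
The only algebraic point to check is that $\Siwr \cdot \tfrac{\Dim \Pmu -4}{2}=\tfrac{4-(\Dim -2)\Pmu }{2}$, which matches the exponent of $\|u\|_{L^2}$ appearing in~\eqref{GN}.

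With this in hand, inequality~\eqref{fCdeltau} is immediate: the hypothesis makes the parenthesized ratio at most $1$, and hence $E(u)\ge (\tfrac{1}{2}-\tfrac{2}{\Dim \Pmu })\|\nabla u\|_{L^2}^2$. To deduce~\eqref{fCdeltad}, I would first extract from~\eqref{Q} the clean identity $E(Q)=\tfrac{\Dim \Pmu -4}{2\Dim \Pmu }\|\nabla Q\|_{L^2}^2$, which allows~\eqref{fCdeltau} to be rewritten as $E(u)\ge \tfrac{E(Q)}{\|\nabla Q\|_{L^2}^2}\|\nabla u\|_{L^2}^2$. Multiplying this by $M(u)^\Siwr =\|u\|_{L^2}^{2\Siwr }$ and dividing by $M(Q)^\Siwr =\|Q\|_{L^2}^{2\Siwr }$ produces
\[
\frac{\|\nabla u\|_{L^2}^2\|u\|_{L^2}^{2\Siwr }}{\|\nabla Q\|_{L^2}^2\|Q\|_{L^2}^{2\Siwr }}\le \frac{E(u)M(u)^\Siwr }{E(Q)M(Q)^\Siwr },
\]
and taking square roots gives~\eqref{fCdeltad}.

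Finally,~\eqref{fCdeltat} will come from feeding~\eqref{fCdeltad} back into the master estimate. Raising both sides of the previous display to the power $(\Dim \Pmu -4)/4$ bounds the factor $\bigl(\tfrac{\|\nabla u\|_{L^2}\|u\|_{L^2}^\Siwr }{\|\nabla Q\|_{L^2}\|Q\|_{L^2}^\Siwr }\bigr)^{(\Dim \Pmu -4)/2}$ in the master estimate by $\bigl(\tfrac{E(u)M(u)^\Siwr }{E(Q)M(Q)^\Siwr }\bigr)^{(\Dim \Pmu -4)/4}$, exactly the exponent appearing in~\eqref{fCdeltat}. Rewriting $8\|\nabla u\|_{L^2}^2-\tfrac{4\Dim \Pmu }{\Ppu }\|u\|_{L^\Ppu }^\Ppu = 8\|\nabla u\|_{L^2}^2 - 8\cdot\tfrac{\Dim \Pmu }{2\Ppu }\|u\|_{L^\Ppu }^\Ppu $ and applying the master estimate to the second term then yields~\eqref{fCdeltat}. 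No individual step is genuinely hard; the only real obstacle is keeping the exponents straight, which is precisely why it pays to package everything through the scaling-invariant quantity $\|\nabla u\|_{L^2}\|u\|_{L^2}^\Siwr $.
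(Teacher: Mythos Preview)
Your proof is correct and follows essentially the same route as the paper: both arguments rest on the sharp Gagliardo--Nirenberg inequality rewritten in terms of the scale-invariant quantity $\|\nabla u\|_{L^2}\|u\|_{L^2}^{\Siwr}$, together with the Pohozaev identities~\eqref{Q} for $Q$. The only organizational difference is that the paper introduces the auxiliary function $f(x)=\tfrac{1}{2}x^2-\tfrac{\Cgn}{\Ppu}x^{\Dim\Pmu/2}$ and records the inequality $f(x)\ge \tfrac{\Dim\Pmu-4}{2\Dim\Pmu}x^2$ on $[0,x_1]$ (with $x_1=\|\nabla Q\|_{L^2}\|Q\|_{L^2}^{\Siwr}$), whereas you go directly through your ``master estimate''; these are the same computation in different clothing.
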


\begin{proof}
It follows from Gagliardo-Nirenberg's inequality~\eqref{GN} that
\begin{equation} \label{fCpbd}
\begin{split}
E(u)M(u)^\Siwr & = \frac {1} {2} [ \|\nabla u\| _{ L^2 }  \|u\| _{ L^2 }^\Siwr]^2
- \frac {1} {\Ppu }  \|u\| _{ L^{\Ppu  }} ^{\Ppu }   \|u\| _{ L^2 }^{2\Siwr}\\
& \ge f( \|\nabla u\| _{ L^2 }  \|u\| _{ L^2 }^\Siwr ),
\end{split}
\end{equation}
where
\begin{equation}  \label{fCpbt}
f(x)= \frac {1} {2}x^2- \frac {\Cgn} {\Ppu } x^{\frac {\Dim \Pmu } {2}},
\end{equation}
 for $x\ge 0$.
 Since equality holds in~\eqref{GN} for $f=Q$, we also have
\begin{equation}  \label{fCpbtb}
E(Q)M(Q)^\Siwr = f( \|\nabla Q\| _{ L^2 }  \|Q\| _{ L^2 }^\Siwr ).
\end{equation}
Note that $f$ is increasing on $(0,x_1)$ and decreasing on $(x_1,\infty )$, where
\begin{equation}  \label{fCpbq}
x_1= \Bigl(\frac {\Cgn} {\Ppu } \frac {\Dim \Pmu } {2} \Bigr)^{ - \frac {2} { \Dim \Pmu
-4}}=  \| \nabla  Q \|_{L^2} \| Q \|_{L^2}^\Siwr .
\end{equation}
(We used~\eqref{fCGNb} in the last identity.)
An elementary calculation shows that
\begin{equation}  \label{fCpbs}
f(x) \ge \frac {\Dim \Pmu  -4} {2 \Dim \Pmu } x^2,
\end{equation}
for $0\le x\le x_1$, with equality for $x=x_1$, i.e.
\begin{equation}  \label{fCpbp}
 f( \|\nabla Q\| _{ L^2 }  \|Q\| _{ L^2 }^\Siwr )=  \frac {\Dim \Pmu  -4} {2 \Dim \Pmu } ( \|\nabla Q\| _{ L^2 }  \|Q\| _{ L^2 }^\Siwr)^2.
\end{equation}
Therefore, we see that~\eqref{fCdeltau} follows from~\eqref{fCpbq}, \eqref{fCpbd} and~\eqref{fCpbs} and that~\eqref{fCdeltad} follows from~\eqref{fCpbd}, \eqref{fCpbs}, \eqref{fCpbtb} and~\eqref{fCpbp}.
Set now
\begin{equation*}
R(u)= 8\| \nabla  u \|_{L^2}^2 - \frac{4 \Dim \Pmu }{\Ppu }\| u \|_{L^{\Ppu }}^{\Ppu }.
\end{equation*}
If $ x= \|\nabla u\| _{ L^2 }  \|u\| _{ L^2 }^\Siwr$,
it follows from~\eqref{GN} and~\eqref{fCpbq}  that
\begin{equation*}
\begin{split}
\frac {1} {8} M(u)^\Siwr R(u)& \ge
x^2 - \frac{ \Dim \Pmu }{2 (\Ppu )} \Cgn x^{ \frac {\Dim \Pmu } {2}}
\\ & =x^2 - x_1 ^{- \frac {\Dim \Pmu -4} {2}} x ^{ \frac {\Dim \Pmu} {2}} = x^2 [1 - (x/x_1) ^{ \frac {\Dim \Pmu -4} {2}}  ].
\end{split}
\end{equation*}
Applying~\eqref{fCdeltad}, we deduce that
\begin{equation*}
\frac {1} {8} M(u)^\Siwr R(u) \ge  x^2  \Bigl[ 1 -   \Bigl( \frac {E(u)M(u)^\Siwr } {E(Q)M(Q)^\Siwr } \Bigr)^{ \frac {\Dim \Pmu -4} {4} }  \Bigr],
\end{equation*}
from which~\eqref{fCdeltat} follows.
\end{proof}

\begin{rem} \label{eRemd}
Let $0<\omega <1$ and let $\Inv  _{ \omega  }$ be defined by~\eqref{fProoft}.
It follows  from~\eqref{fCdeltad} that $\Inv  _{ \omega  }$ is a closed subset of $H^1 (\R^\Dim ) $.
\end{rem}

\section{The Cauchy problem} \label{sLocal}
We collect in this section some results concerning the Cauchy problem~\eqref{SCH}.
We will use in particular the Strichartz estimates
\begin{gather}
\|\Es{\cdot } \DI  \| _{ L^\Qwr (\R, L^\Rwr) } + \|\Es{\cdot } \DI  \| _{ L^\Gwr (\R, L^\Gwr) }
 \le C  \|\DI  \| _{ L^2 }, \label{fStru} \\
\| \Es{\cdot }  \DI  \|_{ L^\Awr (\R, L^\Rwr) }  \le C \| \DI  \|_{\dot{H}^\Swr }. \label{fStrt}
\end{gather}
\eqref{fStru} is a standard Strichartz estimate.  (See~\cite{Strichartz, Yajima}.)
\eqref{fStrt} follows by applying $(-\Delta )^{\frac {\Swr} {2}}$ to the standard Strichartz estimate
and using Sobolev's inequality.
We need the following Strichartz-type estimate for non admissible pairs
(see~\cite[Lemma~2.1]{RapDec})
\begin{equation}\label{KATO1}
\Bigl\| \int^t_0 \Es{(\cdot -t')} f(t') dt'  \Bigr\|_{L^{\Awr}((0,\infty ), L^{\Rwr })} \le C \| f \|_{L^{\Bwr '}((0,\infty ), L^{\Rwr '})}.
\end{equation}
Next, using Gagliardo-Nirenberg's inequality
$ \|u\| _{ L^\Rwr }\le C \|\nabla u\| _{ L^2 }^\eta
 \| u\| _{ L^\Gwr }^{1-\eta}$ (for an appropriate value of $\eta\in (0,1)$) and
Sobolev's embedding $H^1 (\R^\Dim ) \subset L^\Gwr (\R^ \Dim ) $, it is not difficult to
show that
\begin{equation} \label{fHolu}
 \|u \| _{ L^\Awr ((0,\infty ), L^\Rwr ) } \le C  \|u\| _{ L^\infty ((0,\infty ), H^1) }^{\frac {\Awr -\Gwr } {\Awr }}  \|u\| _{ L^\Gwr ((0,\infty ), L^\Gwr ) }^{\frac {\Gwr } {\Awr }},
\end{equation}
for all $u\in L^\Gwr ((0,\infty ), L^\Gwr (\R^\Dim )) \cap L^\infty ((0,\infty ), H^1(\R^\Dim ))$.

It follows from~\cite[Theorem~2.1]{HR1} that for every initial value $\DI \in \Inv$ (defined by~\eqref{fDfInv}), the corresponding solution $u$ of~\eqref{SCH}  is global, for both positive and negative time. Thus we may define the flow $(\Sol(t)) _{ t\in \R }$ by
\begin{equation} \label{fFlowu}
\Sol(t) \DI = u(t),
\end{equation}
for $\DI \in \Inv$ and $t\in \R$, where $u$ is the solution of~\eqref{SCH}.
The map $(t, \DI ) \mapsto \Sol (t )\DI $ is continuous $\R \times \Inv \to H^1 (\R^\Dim ) $, where
$\Inv$ is equipped with the $H^1$ topology.  Note that
\begin{equation} \label{fFlowd}
\Sol(- t) \DI = \overline{\Sol (t)  \overline{\DI }  } ,
\end{equation}
for all $t\in \R$ and $\DI \in \Inv$.
It is proved in~\cite[Theorem~2.1]{HR1} that
$\Sol (t) \Inv \subset \Inv$,
for all $t\in \R$, from which it follows by conservation of energy that
\begin{equation} \label{fInvKo}
\Sol (t) \Inv  _{ \omega  }\subset \Inv _{ \omega  },
\end{equation}
for all $t\in \R$ and $\omega \in (0,1)$, where $\Inv  _{ \omega  }$ is defined by~\eqref{fProoft}.

\begin{rem} \label{eRemu}
Let $0<\omega <1$ and $F \subset  \Inv  _{ \omega  }$ be a relatively compact subset of $H^1 (\R^\Dim ) $. It follows that for every  $R, T>0$,
\begin{equation*}
E= \{ \Sol (t) \DI  (\cdot -y);\, \DI  \in F, 0\le t\le T ,  |y|\le R\},
\end{equation*}
is a relatively compact subset of $H^1 (\R^\Dim ) $.
Indeed, suppose $ (\DI  _n) _{n\ge 1 }\subset F$, $(t_n) _{ n\ge 1 }\subset [0,T]$ and $(y_n) _{ n\ge 1 }\subset \{  |y|\le R\}$. By possibly extracting
(and applying Remark~\ref{eRemd}), there exist $\DI  \in \Inv  _{ \omega  }$, $t\in [0,T]$ and $y\in \R^\Dim $ such that $\DI  _n\to \DI  $ in $H^1 (\R^\Dim ) $, $t_n \to t$ and $y_n \to y$ as $n\to \infty $. It  follows that
$\Sol (t_n) \DI  _n(\cdot -y_n) \to \Sol (t) \DI  (\cdot -y)$ in $H^1 (\R^\Dim ) $, which proves the claim.
\end{rem}

The next result is a sufficient condition for scattering.

\begin{prop} \label{eBasic}
Let $\DI  \in H^1 (\R^\Dim ) $ and $u$ be the corresponding solution of~\eqref{SCH}.
If $u$ is global and $u\in L^{\Awr }((0,\infty ), L^{\Rwr }(\R^\Dim ))$, then
$u$ scatters, i.e. there exists a scattering state $u^+\in H^1 (\R^ \Dim ) $ such that~\eqref{fDScat} holds.
\end{prop}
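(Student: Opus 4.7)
\emph{Plan.} The strategy is to define the candidate scattering state via Duhamel's formula and show the $H^1$ remainder vanishes, using Strichartz estimates. I would set
$$u^+ = \DI + i\int_0^\infty e^{-is\Delta}(|u|^\Pmu u)(s)\, ds,$$
so that $u(t) - \Es{t}u^+ = -i\int_t^\infty e^{i(t-s)\Delta}(|u|^\Pmu u)(s)\, ds$, and the problem reduces to showing the $H^1$-norm of this tail tends to $0$ as $t\to\infty$.

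The first step is to upgrade the hypothesis $u \in L^\Awr((0,\infty), L^\Rwr)$ to global Strichartz control at the $H^1$ level. I would partition $(0,\infty) = \bigcup_{j=1}^J I_j$ into finitely many intervals on which $\|u\|_{L^\Awr(I_j,L^\Rwr)} \le \varepsilon$ for some small $\varepsilon$ to be chosen (possible since $u\in L^\Awr((0,\infty),L^\Rwr)$). On each $I_j$, writing Duhamel from the left endpoint, applying the admissible Strichartz estimate \eqref{fStru} at the $H^1$ level (for both $u$ and $\nabla u$), and invoking the H\"older bound
$$\|\nabla(|u|^\Pmu u)\|_{L^{\Qwr'}(I_j,L^{\Rwr'})} + \||u|^\Pmu u\|_{L^{\Qwr'}(I_j,L^{\Rwr'})} \le C\|u\|_{L^\Awr(I_j,L^\Rwr)}^\Pmu \|u\|_{L^\Qwr(I_j,W^{1,\Rwr})},$$
(which relies on the exponent identity $(\Pmu+1)\Rwr'=\Rwr$) yields a closed estimate in which the nonlinear contribution is absorbed for $\varepsilon$ small. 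Iterating across the finitely many intervals, together with mass conservation for the $L^2$ part, yields $u\in L^\infty((0,\infty),H^1)\cap L^\Qwr((0,\infty),W^{1,\Rwr})$.

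With this global control, the second step is routine: the dual Strichartz estimate applied to the remainder gives
$$\|u(t)-\Es{t}u^+\|_{H^1} \le C\|u\|_{L^\Awr((t,\infty),L^\Rwr)}^\Pmu \|u\|_{L^\Qwr((t,\infty),W^{1,\Rwr})},$$
which tends to $0$ as $t\to\infty$ by absolute continuity of the Lebesgue integral, since both Strichartz norms on $(0,\infty)$ are finite from the first step. In particular $u^+$ is well-defined in $H^1$ and $u$ scatters to it.

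The main technical obstacle is arranging the H\"older decomposition of $\nabla(|u|^\Pmu u)\sim |u|^\Pmu|\nabla u|$ so that one factor sits in the hypothesized $L^\Awr L^\Rwr$ norm while the other sits in an admissible-pair Strichartz norm that can be absorbed. The exponent identities $(\Pmu+1)\Rwr'=\Rwr$ and $(\Pmu+1)\Bwr'=\Awr$ noted at the end of the introduction are designed precisely to make this decomposition close in both the space and time variables, so once the Cauchy problem exponents are correctly matched the rest is standard.
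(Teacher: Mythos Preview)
Your argument is correct and is essentially the same approach that the paper intends: the paper's proof simply invokes Proposition~2.3 of~\cite{RapDec} to obtain $u\in L^\eta((0,\infty),L^\mu)$ for every admissible pair $(\eta,\mu)$, and then refers to standard calculations (Section~7.8 of~\cite{Css}) for scattering. Your two steps---the small-interval partition plus Strichartz bootstrap to upgrade $L^\Awr L^\Rwr$ control to global $L^\Qwr W^{1,\Rwr}$ control, and then the Duhamel tail estimate---are exactly the content of those cited results, spelled out in detail with the correct exponent arithmetic (in particular the identity $\frac{1}{\Qwr'}=\frac{\alpha}{\Awr}+\frac{1}{\Qwr}$ that makes the H\"older decomposition close).
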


\begin{proof}
It follows from Proposition~2.3 in~\cite{RapDec} that $u\in L^\eta ((0,\infty ), L^\mu  (\R^\Dim ) )$
for every admissible pair $(\eta,\mu )$. Scattering then follows by standard calculations.
(See e.g. Section~7.8 in~\cite{Css}.)
\end{proof}

We next recall a small data global existence property.

\begin{prop} \label{small}
There exist $0< \Dsd \le 1$ and $C$ such that if $ \DI  \in H^1(\R^\Dim ) $ and
$\| \DI \|_{\dot{H}^\Swr } \le \Dsd $, then the corresponding solution $u$ of~\eqref{SCH}
is global for both positive and negative time and
\begin{equation} \label{fsmallu}
 \|u \| _{ L^\Gwr ( \R^{\Dim +1})  } +   \|u \| _{ L^\Awr (\R, L^\Rwr)  } +   \|u\| _{ L^\infty (\R, H^1) } \le
 \Csd   \| \DI  \| _{ H^1 }.
\end{equation}
\end{prop}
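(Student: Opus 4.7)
The plan is to run a contraction-mapping argument in the critical-scaling norm $L^\Awr((0,\infty), L^\Rwr)$, using~\eqref{fStrt} to control the free evolution and the non-admissible Strichartz estimate~\eqref{KATO1} to handle the Duhamel term. The companion bounds in $L^\infty(\R, H^1)$ and $L^\Gwr(\R, L^\Gwr)$ are then recovered from the admissible Strichartz estimate~\eqref{fStru} combined with a suitable H\"older splitting of the nonlinearity.

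More precisely, I would consider $\Phi(u)(t) = \Es{t}\DI + i \int_0^t \Es{(t-s)} |u(s)|^\Pmu u(s)\, ds$ on the closed ball $B_M = \{u \in L^\Awr((0,\infty), L^\Rwr(\R^\Dim)) : \|u\|_{L^\Awr((0,\infty), L^\Rwr)} \le M\}$ equipped with the $L^\Awr L^\Rwr$ metric (in which it is complete). By~\eqref{fStrt}, $\|\Es{\cdot}\DI\|_{L^\Awr((0,\infty), L^\Rwr)} \le C\|\DI\|_{\dot{H}^\Swr} \le C\Dsd$. By~\eqref{KATO1}, the Duhamel piece is dominated by $C\|\,|u|^\Pmu u\|_{L^{\Bwr'}((0,\infty), L^{\Rwr'})}$, and the identities $(\Pwr)\Rwr' = \Rwr$ and $(\Pwr)\Bwr' = \Awr$ recorded at the end of the introduction reduce this to $C \|u\|_{L^\Awr L^\Rwr}^{\Pwr}$. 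A parallel computation, using $|\,|u|^\Pmu u - |v|^\Pmu v| \lesssim (|u|^\Pmu + |v|^\Pmu)|u-v|$, produces the contraction estimate. Choosing $M = 2C\Dsd$ and $\Dsd$ small enough, $\Phi$ is a strict contraction, and its fixed point $u$ is the desired forward-global solution with $\|u\|_{L^\Awr((0,\infty), L^\Rwr)} \le 2C\Dsd$.

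To upgrade to $L^\infty(\R, H^1) \cap L^\Gwr(\R, L^\Gwr)$ I would apply~\eqref{fStru} with the admissible pair $(\Gwr,\Gwr)$ both to $u$ and to $\nabla u$. Conservation of mass already gives $\|u\|_{L^\infty L^2} = \|\DI\|_{L^2}$, so it suffices to control the gradient and the space-time $L^\Gwr$ norm. Using $|\nabla(|u|^\Pmu u)| \le C|u|^\Pmu\,|\nabla u|$ together with a H\"older split of the form $\|\,|u|^\Pmu\,|\nabla u|\,\|_{L^{\sigma'}L^{\rho'}} \le \|u\|_{L^\Awr L^\Rwr}^\Pmu\,\|\nabla u\|_{L^{\tilde\sigma}L^{\tilde\rho}}$, where $(\tilde\sigma,\tilde\rho)$ is chosen so that $1/\sigma' = \Pmu/\Awr + 1/\tilde\sigma$ and $1/\rho' = \Pmu/\Rwr + 1/\tilde\rho$, one checks by direct substitution that $(\tilde\sigma,\tilde\rho)$ satisfies the admissibility relation $2/\tilde\sigma + \Dim/\tilde\rho = \Dim/2$. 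Since $\|u\|_{L^\Awr L^\Rwr}^\Pmu \le (2C\Dsd)^\Pmu$ can be made arbitrarily small by shrinking $\Dsd$, the resulting inequality closes by absorption and yields $\|u\|_{L^\infty H^1} + \|u\|_{L^\Gwr L^\Gwr} \le \Csd\|\DI\|_{H^1}$. The analogous solution on $(-\infty, 0)$ follows either by repeating the argument backward in time or via~\eqref{fFlowd} applied to the forward solution starting from $\overline{\DI}$.

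The principal technical obstacle is the book-keeping in the H\"older split of the previous paragraph: one must verify that matching the factor $|u|^\Pmu$ against $L^\Awr L^\Rwr$ and the remaining $\nabla u$ against a dual-admissible pair produces exponents $(\tilde\sigma,\tilde\rho)$ that are themselves admissible. This reduces to checking $1 - 2/\Gwr = \Pmu/\Awr$ at the level of space exponents (which is exactly the Sobolev-scaling content of~\eqref{fStrt}) and the corresponding time identity, and is a short but careful computation using the explicit formulas for $\Awr$, $\Rwr$, $\Gwr$ and $\Swr$ in terms of $\Dim$ and $\Pmu$.
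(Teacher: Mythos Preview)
Your proposal is correct and follows essentially the same strategy as the paper. The paper's own proof is a two-sentence citation of Propositions~2.3 and~2.4 of~\cite{RapDec} (which contain precisely the contraction argument in $L^\Awr L^\Rwr$ and the Strichartz upgrade you outline), followed by an application of~\eqref{fStrt} to convert smallness of $\|\Es{\cdot}\DI\|_{L^\Awr(\R,L^\Rwr)}$ into smallness of $\|\DI\|_{\dot H^\Swr}$; your write-up is thus a self-contained unpacking of that citation.
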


\begin{proof}
It follows from Propositions~2.4 and~2.3 in~\cite{RapDec} that if $\DI\in H^1 (\R^\Dim ) $
and $\| \Es{\cdot }  \DI  \|_{ L^\Awr (\R, L^\Rwr) }$ is sufficiently small, then the conclusions of the proposition hold. The result then follows from Strichartz estimate~\eqref{fStrt}.
\end{proof}

\begin{cor} \label{eCritu}
Let $\DI  _1,\DI  _2\in H^1 (\R^\Dim ) $ satisfy $ \|\DI  _1\| _{ H^1 },  \|\DI  _2\| _{ H^1 }\le \Dsd $ where $\Dsd $ is given by Proposition~$\ref{small}$ and let $u_1, u_2$ be the corresponding solutions of~\eqref{SCH}.
If $(t_n^1) _{ n\ge 1 }, (t_n^2) _{ n\ge 1 }\subset \R$ and $(x_n^1) _{ n\ge 1 }, (x_n^2) _{ n\ge 1 }\subset \R^\Dim $ satisfy $ |t_n^1-t_n^2| + |x_n^1-x_n^2| \to \infty $ as $n\to \infty $, then
\begin{equation} \label{eCritu:u}
\sup  _{ t\in \R }  | (u_1(t-t_n^1, \cdot -x_n^1), u_2(t-t_n^2, \cdot -x_n^2))  _{ H^1 }| \goto _{ n\to \infty  }0,
\end{equation}
where $(\cdot ,\cdot ) _{ H^1 }$ is the scalar product in $H^1 (\R^\Dim ) $.
\end{cor}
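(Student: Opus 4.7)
My approach would be to reduce the statement, via a change of variables and the unitarity of spatial translation on $H^1$, to an asymptotic orthogonality for a single pair of sequences, and to handle the uniformity in $t$ by a contradiction-and-extraction argument. With the substitution $s = t - t_n^1$ and the translation invariance of $(\cdot,\cdot)_{H^1}$, the supremum in \eqref{eCritu:u} equals
\[
\sup _{ s\in \R } |(u_1(s), u_2(s + \tau_n, \cdot - \xi_n))_{H^1}|,
\]
where $\tau_n = t_n^1 - t_n^2$ and $\xi_n = x_n^2 - x_n^1$ still satisfy $|\tau_n| + |\xi_n| \to \infty$. I would then assume for contradiction that this supremum does not tend to zero, so that on a subsequence there exist $\varepsilon_0 > 0$ and $s_n \in \R$ with $|(u_1(s_n), u_2(s_n + \tau_n, \cdot - \xi_n))_{H^1}| \ge \varepsilon_0$.

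Since $\|\DI _j\|_{H^1} \le \Dsd$, Propositions~\ref{small} and~\ref{eBasic} supply scattering states $u_j^{\pm} \in H^1(\R^\Dim)$ with $\|u_j(s) - \Es{s} u_j^{\pm}\|_{H^1} \to 0$ as $s \to \pm\infty$. Passing to a further subsequence, I would arrange $s_n \to s^*$ and $s_n + \tau_n \to t^*$ in $[-\infty, +\infty]$, and then replace $u_1(s_n)$ by $u_1(s^*)$ when $s^* \in \R$ (strong continuity of the flow) or by $\Es{s_n} u_1^{\pm}$ when $s^* = \pm\infty$ (scattering), with $o_{H^1}(1)$ error, and analogously for $u_2(s_n + \tau_n)$. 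Using that $\Es{s}$ is isometric on $H^1$ and commutes with spatial translation, the inner product reduces, up to $o(1)$, to
\[
(f, \Es{\sigma_n} g(\cdot - \xi_n))_{H^1},
\]
with $f, g \in H^1(\R^\Dim)$ fixed and $\sigma_n \in \{0, \pm\tau_n, \pm s_n, \pm(s_n + \tau_n)\}$ depending on the case. A brief inspection of the configurations of $(s^*,t^*)$ shows $|\sigma_n| + |\xi_n| \to \infty$ in each case: when $\sigma_n$ would be bounded (i.e.\ $\sigma_n=0$ or $\sigma_n=\pm\tau_n$ bounded) the hypothesis forces $|\xi_n| \to \infty$, and otherwise $|\sigma_n| \to \infty$ directly.

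The final ingredient would be the linear orthogonality lemma that for fixed $f, g \in H^1(\R^\Dim)$,
\[
(f, \Es{\sigma_n} g(\cdot - \xi_n))_{H^1} \goto _{ n\to \infty  } 0 \quad \text{whenever } |\sigma_n| + |\xi_n| \to \infty .
\]
By density I would take $f, g \in \Srn$ and rewrite the pairing as $\langle (I-\Delta) f, \Es{\sigma_n} g(\cdot - \xi_n) \rangle_{L^2}$. If $|\sigma_n| \to \infty$, the $L^1 \to L^\infty$ dispersive estimate gives $\|\Es{\sigma_n} g(\cdot - \xi_n)\|_{L^\infty} \le C |\sigma_n|^{-\Dim/2}\|g\|_{L^1} \to 0$, and pairing against $(I-\Delta) f \in L^1$ closes the estimate; otherwise $\sigma_n$ is bounded and $|\xi_n| \to \infty$, so after extracting $\sigma_n \to \sigma$ one has $\Es{\sigma_n} g(\cdot - \xi_n) = (\Es{\sigma_n} g)(\cdot - \xi_n) \weakcv 0$ in $H^1$, and the pairing with fixed $f$ again vanishes. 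This contradicts $|(u_1(s_n), u_2(s_n + \tau_n, \cdot - \xi_n))_{H^1}| \ge \varepsilon_0$ and finishes the proof.

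I expect the main obstacle to be the uniformity of the supremum in $t$, which blocks any naive pointwise weak-convergence argument: a priori the supremum is attained at times $s_n$ that move with $n$. Overcoming this is precisely the point of the contradiction-plus-extraction strategy: once a worst-case sequence $s_n$ is fixed, strong continuity of the flow on compact time intervals, together with scattering of small $H^1$-data at both temporal infinities, funnels every limiting configuration into the single linear orthogonality lemma, in which the divergence of $|\tau_n| + |\xi_n|$ manifests as either temporal dispersion or spatial translation out to infinity.
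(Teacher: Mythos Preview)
Your proposal is correct and follows essentially the same approach as the paper: a contradiction-and-extraction argument that, after normalizing the time and space shifts, replaces the nonlinear profiles by either a fixed $H^1$ element (when the corresponding time is bounded) or by the linear evolution of a scattering state (when it diverges), and then invokes the linear orthogonality $(f,\Es{\sigma_n}g(\cdot-\xi_n))_{H^1}\to 0$ when $|\sigma_n|+|\xi_n|\to\infty$. The paper organizes the cases slightly differently (splitting on whether $|t_n^1|+|t_n^2|$ is bounded and citing Lemma~\ref{eRCpu} and the argument of Lemma~\ref{eAuxt}), but the substance is the same.
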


Before proving Corollary~\ref{eCritu}, we state the following lemma, which we will also use later on. Its elementary proof is left to the reader.

\begin{lem} \label{eRCpu}
Let $\nu \ge 1$, $1\le q<\infty $ and let $E$ be a relatively compact subset of $L^p (\R^\nu ) $.
It follows that
\begin{equation} \label{eRCpu:u}
\sup  _{ u\in E } \int  _{ \{ |x|>R \} }  |u|^q \goto _{ R\to \infty  }0.
\end{equation}
Consequently, if $(y_n^1) _{ n\ge 1 }, (y_n^2) _{ n\ge 1 }\subset \R^\nu $
 and $|y_n^1 - y_n^2| \to \infty$ as $n\to \infty$, then
\begin{equation} \label{eRCpu:d}
\sup  _{ u,v \in E } \int  _{ \R^\nu  }  |u(\cdot -y_n^1)|^{\frac {q} {2}}  |v(\cdot -y_n^2)|^{\frac {q} {2}}
\goto _{ n\to \infty  }0.
\end{equation}
\end{lem}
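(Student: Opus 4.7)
The plan is to establish \eqref{eRCpu:u} as a standard uniform tightness statement for relatively compact subsets of $L^q(\R^\nu )$ (the exponent in the hypothesis should read $L^q$, matching the integrand), and then to deduce \eqref{eRCpu:d} by a translation, a splitting of $\R^\nu $, and Cauchy-Schwarz.

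For \eqref{eRCpu:u}, I would fix $\varepsilon >0$ and use total boundedness of $E$ to select $u_1,\ldots ,u_N \in E$ such that every $u\in E$ satisfies $\|u-u_j\| _{ L^q } <\varepsilon $ for some $j$. Since each $|u_j|^q$ lies in $L^1(\R^\nu )$, dominated convergence furnishes $R_j$ with $\|u_j\| _{ L^q(\{ |x|>R_j\} ) } <\varepsilon $; setting $R= \max _j R_j$ and applying Minkowski's inequality on $\{ |x|>R\}$ yields $\|u\| _{ L^q(\{ |x|>R\} ) } < 2\varepsilon $ uniformly in $u\in E$, whence \eqref{eRCpu:u}.

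For \eqref{eRCpu:d} I would translate $x\mapsto x+y_n^1$ and set $z_n= y_n^2 - y_n^1$, which reduces the claim to showing that
\[
\sup  _{ u,v\in E } \int  _{ \R^\nu  } |u(x)|^{ q/2 } |v(x-z_n)|^{ q/2 }\, dx \goto  _{ n\to \infty  } 0,
\]
with $|z_n|\to \infty $. Splitting $\R^\nu = A_n \cup A_n^{\Comp }$ with $A_n= \{ |x|\le |z_n|/2\} $, on $A_n$ one has $|x-z_n|\ge |z_n|/2$, and Cauchy-Schwarz yields
\[
\int  _{ A_n } |u|^{ q/2 } |v(\cdot -z_n)|^{ q/2 } \le \|u\| _{ L^q }^{ q/2 } \Bigl( \int  _{ \{ |y|\ge |z_n|/2\} } |v(y)|^q\, dy \Bigr) ^{ 1/2 },
\]
and symmetrically on $A_n^{\Comp }$. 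Relative compactness of $E$ gives a uniform $L^q$-bound on $u$ and $v$, while \eqref{eRCpu:u} applied at $R= |z_n|/2$ supplies a tail bound uniform in $E$ that tends to $0$; hence both contributions vanish uniformly in $u,v\in E$.

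No essential obstacle is expected: the only care needed is to invoke tightness in the uniform-over-$E$ form furnished by \eqref{eRCpu:u} rather than pointwise in $u$, and to note that relative compactness in $L^q$ automatically entails boundedness in $L^q$.
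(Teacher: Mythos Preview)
Your argument is correct (including the observation that $L^p$ in the hypothesis is a typo for $L^q$). The paper itself omits the proof, stating only ``Its elementary proof is left to the reader,'' so there is nothing to compare your approach against; your $\varepsilon$-net argument for tightness and the split at $|x|=|z_n|/2$ followed by Cauchy--Schwarz constitute exactly the kind of routine verification the authors had in mind.
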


\begin{proof}[Proof of Corollary~$\ref{eCritu}$]
We note that by Propositions~\ref{small} and~\ref{eBasic} and formula~\eqref{fFlowd}, $u^1$ and $u^2$ scatter at $\pm \infty $.
Suppose by contradiction there exist $\varepsilon >0$ and a sequence $(t_n) _{ n\ge 1 }\subset \R$ such that $ | (u_1(t_n-t_n^1, \cdot -x_n^1), u_2(t_n-t_n^2, \cdot -x_n^2))  _{ H^1 }| \ge \varepsilon $.  Without loss of generality, we may replace $t_n^j$ by $t_n-t_n^j$ for $j=1,2$, so we obtain
\begin{equation} \label{fCrituu}
 | (u_1(t_n^1, \cdot -x_n^1 +x_n^2), u_2(t_n^2, \cdot ))  _{ H^1 }| \ge \varepsilon .
\end{equation}
By possibly extracting, we may assume that one of the following holds: $ |t_n^1|+  |t_n^2|$ is bounded, or  $ |t_n^1|\to \infty $ and $ |t_n^2|\to \infty $, or $ |t_n^1|\to \infty $ and $ |t_n^2|$ is bounded.
In the first case, $u_1(t_n^1)$ and $u_2(t_n^1)$ belong to a relatively compact subset of
$H^1(\R^\Dim )$ and we obtain a contradiction by applying Lemma~\ref{eRCpu}, since $ |x_n^1-x_n^2| \to \infty$.
Suppose now $ t_n^1\to \infty $ and  $ t_n^2\to \pm \infty $.
Since $u_1$ and $u_2$ scatter we may approximate  $u_j(t_n^j)$ by $ \Es{t_n^j} \psi ^j$, $j=1,2$, where $\psi ^j$ is the scattering state of $u^j$, and we deduce from~\eqref{fCrituu} that for $n$ large,
\begin{equation*}
 | ( \Es{(t_n^1-t_n^2)} \psi ^1  (\cdot -x_n^1 +x_n^2),  \psi ^2)  _{ H^1 }|
\ge \frac {\varepsilon } {2}.
\end{equation*}
Since $ |t_n^1-t_n^2| + |x_n^1-x_n^2| \to \infty $, we see that  $\Es{(t_n^1-t_n^2)} \psi ^1  (\cdot -x_n^1 +x_n^2) \rightharpoonup 0$, and we obtain again a contradiction.
Finally,  $ |t_n^2|$ is bounded and, say, $ t_n^1\to \infty $ a similar argument yields a contradiction.
\end{proof}

We now construct the wave operator at $-\infty $ on a certain subset of $H^1 (\R^\Dim ) $.

\begin{prop} \label{WaveO}
Let $0<\omega <1$ and $\Inv  _{ \omega  }$ be defined by~\eqref{fProoft}.
If $\psi  \in H^1(\R^\Dim )$ satisfies
\begin{equation} \label{WaveO:u}
\frac{1}{2 }  \| \nabla  \psi \|_{L^2}^2 M(\psi
)^\Siwr  \le  \omega  E(Q) M(Q)^\Siwr,
\end{equation}
then there exists $\DI  \in \Inv _{ \omega  }$ such that
\begin{equation*}
\| \Sol (-t)\DI   - \Esm{t}  \psi  \|_{ H^1 } \goto _{ t\to  \infty  }0.
\end{equation*}
\end{prop}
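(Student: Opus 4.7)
The plan is to produce $\DI$ by a fixed-point construction on a left-infinite time interval, check that the resulting $u$ lies in $\Inv_\omega$ at some time $t_0\le-T$, and then use the flow invariance~\eqref{fInvKo} (together with the global existence of solutions starting in $\Inv$ recalled in Section~\ref{sLocal}) to extend the solution to $t=0$ and set $\DI=\Sol(-t_0)u(t_0)$. Concretely, for $T>0$ large, I would look for $u$ on $(-\infty,-T]$ as a fixed point of the Duhamel map
\begin{equation*}
\Phi(u)(t)=\Es{t}\psi+i\int_{-\infty}^{t}\Es{(t-s)}|u(s)|^{\Pmu}u(s)\,ds
\end{equation*}
in a small closed ball of a mixed Strichartz space $X_T$ in which both $\|u\|_{L^{\Awr}((-\infty,-T],L^{\Rwr})}$ and the full $H^1$-regularity of $u$ are controlled. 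The scale-invariant part of the nonlinear estimate is handled by~\eqref{KATO1} (transferred to the left half-line by time reversal) together with the H\"older identities $(\Pwr)\Bwr'=\Awr$ and $(\Pwr)\Rwr'=\Rwr$, and the $\dot H^1$ part by applying $\nabla$ and using the pointwise bound $|\nabla(|u|^{\Pmu}u)|\le(\Pwr)|u|^{\Pmu}|\nabla u|$; the $L^\infty H^1$ control comes from the admissible Strichartz estimate~\eqref{fStru}. By~\eqref{fStrt}, $\|\Es{\cdot}\psi\|_{L^{\Awr}((-\infty,-T],L^{\Rwr})}\to 0$ as $T\to\infty$ (and similarly for $\nabla\psi$), so $\Phi$ is a contraction on a small ball of $X_T$ for $T$ large, yielding a unique such $u$ with $\|u(t)-\Es{t}\psi\|_{H^1}\to 0$ as $t\to-\infty$.

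The next step is to show that $u(t_0)\in\Inv_\omega$ for some $t_0\le-T$. Standard dispersive decay, proved by density of Schwartz functions in $H^1$ together with the Sobolev embedding $H^1\hookrightarrow L^{\Ppu}$ (valid in the energy-subcritical range~\eqref{fPwr}), gives $\|\Es{t}\psi\|_{L^{\Ppu}}\to 0$ as $|t|\to\infty$; combined with $u(t)-\Es{t}\psi\to 0$ in $H^1$, this yields
\begin{equation*}
E(u(t))\goto_{t\to-\infty}\tfrac12\|\nabla\psi\|_{L^2}^2,\qquad M(u(t))\to M(\psi),\qquad \|\nabla u(t)\|_{L^2}\|u(t)\|_{L^2}^{\Siwr}\to\|\nabla\psi\|_{L^2}\|\psi\|_{L^2}^{\Siwr}.
\end{equation*}
Conservation of $E$ and $M$ together with~\eqref{WaveO:u} then forces $E(u(t))M(u(t))^{\Siwr}\equiv\tfrac12\|\nabla\psi\|_{L^2}^2 M(\psi)^{\Siwr}\le\omega E(Q)M(Q)^{\Siwr}$ on the whole existence interval, giving the first inequality defining $\Inv_\omega$. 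For the second, I use the identity $E(Q)M(Q)^{\Siwr}=\tfrac{\Dim\Pmu-4}{2\Dim\Pmu}(\|\nabla Q\|_{L^2}\|Q\|_{L^2}^{\Siwr})^2$ (which follows from~\eqref{Q}) to rewrite~\eqref{WaveO:u} as
\begin{equation*}
(\|\nabla\psi\|_{L^2}\|\psi\|_{L^2}^{\Siwr})^2\le 2\omega E(Q)M(Q)^{\Siwr}=\omega\,\tfrac{\Dim\Pmu-4}{\Dim\Pmu}(\|\nabla Q\|_{L^2}\|Q\|_{L^2}^{\Siwr})^2<(\|\nabla Q\|_{L^2}\|Q\|_{L^2}^{\Siwr})^2,
\end{equation*}
so $\|\nabla u(t_0)\|_{L^2}\|u(t_0)\|_{L^2}^{\Siwr}<\|\nabla Q\|_{L^2}\|Q\|_{L^2}^{\Siwr}$ for $t_0$ sufficiently negative, placing $u(t_0)$ in $\Inv_\omega$.

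Finally, setting $\DI=\Sol(-t_0)u(t_0)$, the global existence result in Section~\ref{sLocal} gives that $\Sol(\cdot)\DI$ is defined on all of $\R$ and remains in $\Inv_\omega$ by~\eqref{fInvKo}; by uniqueness of the Cauchy problem, $\Sol(t)\DI=u(t)$ on $(-\infty,-T]$, hence $\|\Sol(-t)\DI-\Esm{t}\psi\|_{H^1}=\|u(-t)-\Esm{t}\psi\|_{H^1}\to 0$ as $t\to\infty$, which is the desired conclusion. The principal technical obstacle is the contraction argument in the first step: in order to apply the non-admissible Strichartz estimate~\eqref{KATO1} for the scaling-critical part of the nonlinear estimate while simultaneously propagating the full $H^1$-regularity, the space $X_T$ must be built as the intersection of a scale-invariant Strichartz norm and an $H^1$-admissible one, with the derivative applied to the Duhamel formula to close the $\dot H^1$ part. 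All remaining steps---dispersive decay of $\Es{t}\psi$ in $L^{\Ppu}$, the energy identities from Section~\ref{sEnergy}, and the flow invariance~\eqref{fInvKo}---are then routine.
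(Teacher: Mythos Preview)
Your proof is correct and follows the same overall strategy as the paper: construct a solution $u$ on a left half-line with $u(t)-\Es{t}\psi\to 0$ in $H^1$ as $t\to-\infty$, use the decay $\|\Es{t}\psi\|_{L^{\Ppu}}\to 0$ and conservation laws to place $u(t_0)\in\Inv_\omega$ for $t_0$ negative enough, and then propagate by~\eqref{fInvKo}. The only substantive difference is that the paper does not carry out your first step but simply invokes the existence of the wave operator at $-\infty$ from Strauss~\cite[Theorem~17]{Straussd}, whereas you re-derive it via a Strichartz fixed-point argument.

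One small imprecision in your sketch: the parenthetical ``and similarly for $\nabla\psi$'' after citing~\eqref{fStrt} is misleading, since~\eqref{fStrt} applied to $\nabla\psi$ would require $\psi\in\dot H^{1+\Swr}$, which you do not have. In the contraction, smallness should come only from the scale-invariant norm $\|\Es{\cdot}\psi\|_{L^{\Awr}((-\infty,-T],L^{\Rwr})}$ via~\eqref{fStrt}; the $H^1$-admissible norms of the linear evolution (e.g.\ $L^{\Qwr}W^{1,\Rwr}$ via~\eqref{fStru}) are merely bounded by $C\|\psi\|_{H^1}$, and the nonlinear estimate closes because it carries a factor $\|u\|_{L^{\Awr}L^{\Rwr}}^{\Pmu}$. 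With that adjustment your fixed-point argument is the standard one and the rest of your proof coincides with the paper's.
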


\begin{proof}
Given any $\psi \in H^1 (\R^\Dim ) $, is well-known~\cite[Theorem~17]{Straussd}
that there exist $T\in \R$ and a sulution $u\in C((-\infty ,T], H^1 (\R^\Dim ) )$ of~\eqref{SCH} such that
\begin{equation} \label{fWaveOu}
 \|u(-t) -  \Esm{t}  \psi  \|_{ H^1 } \goto _{ t\to  \infty  }0.
\end{equation}
It follows from~\eqref{fWaveOu}, \eqref{WaveO:u},  and~\eqref{Q} that
\begin{equation*}
\begin{split}
\lim_{t\to\infty}\| \nabla  u(-t) \|_{L^2}^2 \| u(-t) \|_{L^2}^{2\Siwr}
& = \| \nabla  \psi \|_{L^2}^2 \| \psi \|_{L^2}^{2 \Siwr} \le
2\omega  E(Q) M(Q)^{\Siwr}\\
& = \omega \frac {\Dim \Pmu -4} {\Dim \Pmu} \| \nabla  Q \|_{L^2}^{2 }\| Q \|_{L^2}^{2\Siwr}.
\end{split}
\end{equation*}
Therefore, if $t$ is sufficiently large, then
\begin{equation} \label{fWaveOt}
\| \nabla  u(-t) \|_{L^2} \| u(-t) \|_{L^2}^{\Siwr} < \| \nabla  Q \|_{L^2} \| Q \|_{L^2}^{\Siwr}.
\end{equation}
It also follows from~\eqref{fWaveOu} that
$\| u(-t) - \Esm{t}  \psi \|_{L^{\Rwr }}\to 0$. Since $ \|\Esm{t} \psi \| _{ L^\Rwr }\to 0$
(see e.g. Corollary~2.3.7 in~\cite{Css}), we see that
$ \|u(-t) \| _{ L^\Rwr }\to 0$, so that
\begin{equation*}
E(u(-t)) \goto  _{ t\to \infty  } \frac{1}{2} \| \nabla \psi  \|_{L^2}^2.
\end{equation*}
Nota also that $M(u(-t)) \to \| \psi \|_{L^2}^2$, so that
\begin{equation} \label{fWaveOq}
E(u(-t)) M(u(-t))^\Siwr \goto _{ t\to \infty  } \frac{1}{2} \| \nabla \psi  \|_{L^2}^2 \| \psi \|_{L^2}^{2\Siwr}
 \le  \omega  E(Q) M(Q)^\Siwr,
\end{equation}
where we used~\eqref{WaveO:u} in the last inequality.
\eqref{fWaveOt}, \eqref{fWaveOq} and conservation of mass and energy
imply that $u(-t) \in \Inv _{ \omega  }$  for all $t\le T$.
In particular, the solution $u$ is global and, since $\Inv _{ \omega  }$ is invariant by the flow $\Sol (t)$ (see~\eqref{fInvKo}), $u(t)\in \Inv  _{ \omega  }$ for all $t\in \R$.
The result follows by setting $\DI  =u(0)$.
\end{proof}

Finally, we prove a perturbation result. It is analogous to
Theorem~2.14 in~\cite{KenigM} (for the energy-critical equation) and
Proposition~2.3 in~\cite{HR2} (for the 3D cubic equation).
The proofs of the above results would apply with obvious modifications, but we use a slightly more direct argument, based on a Gronwall-type inequality (Lemma~\ref{gronwall}).

\begin{prop}\label{LTPT}
Given any  $A\ge 0$, there exist  $\varepsilon (A)>0$ and $C(A)>0$ with the
following property. If  $u\in C([0,\infty ), H^1 (\R^\Dim ))$ is a solution of~\eqref{SCH},
if $ \widetilde{u} \in C([0,\infty ), H^1 (\R^\Dim ))$ and
 $e\in L^1 _{\mathrm {loc}}([0,\infty ), H ^{ -1 } (\R^\Dim ))$ satisfy
\begin{equation*}
i\widetilde{u}_t + \Delta \widetilde{u} + | \widetilde{u}|^{p - 1} \widetilde{u}
= e,
\end{equation*}
for a.a.  $t>0$, and if
\begin{equation} \label{fHypz}
\begin{split}
& \| \widetilde{u} \|_{L^{\Awr}([0,\infty), L^{\Rwr })} \le A,\quad  \| e \|_{L^{\Bwr '}([0,\infty), L^{\Rwr '})} \le
\varepsilon \le \varepsilon (A),\\
& \| e^{i \cdot \Delta } \left(u(0) - \widetilde{u}(0)\right) \|_{L^{\Awr}([0,\infty), L^{\Rwr })}
\le \varepsilon \le \varepsilon (A),
\end{split}
\end{equation}
 then $u\in L^\Awr ((0,\infty ), L^\Rwr (\R^\Dim ) )$ and
  $ \| u - \widetilde{u} \|_{L^{\Awr}([0,\infty), L^{\Rwr })} \le C\varepsilon $.
\end{prop}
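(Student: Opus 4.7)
The strategy is the standard long-time perturbation argument: partition $[0,\infty)$ into finitely many intervals on which $\widetilde u$ has small $L^\Awr L^\Rwr$ norm, control $w := u-\widetilde u$ on each piece by the non-admissible Strichartz estimate~\eqref{KATO1} together with an absorption/bootstrap argument, and then iterate, accumulating error in a controlled way across the pieces.

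Since $\|\widetilde u\|_{L^\Awr([0,\infty),L^\Rwr)}\le A$, fix a small $\eta>0$ (depending only on $\alpha$, to be chosen) and partition $[0,\infty)=\union_{k=0}^{J-1} I_k$ into $J\le (A/\eta)^\Awr +1$ consecutive intervals $I_k=[T_k,T_{k+1})$ with $\|\widetilde u\|_{L^\Awr(I_k,L^\Rwr)}\le \eta$. The difference $w=u-\widetilde u$ satisfies
\begin{equation*}
i w_t + \Delta w + (|u|^\Pmu u - |\widetilde u|^\Pmu \widetilde u) = -e,
\end{equation*}
and Duhamel starting from $T_k$, combined with~\eqref{KATO1}, the pointwise estimate $\bigl||u|^\Pmu u - |\widetilde u|^\Pmu \widetilde u\bigr|\le C(|u|^\Pmu + |\widetilde u|^\Pmu)|w|$, and H\"older's inequality (using the identities $(\Pwr)\Rwr'=\Rwr$ and $(\Pwr)\Bwr'=\Awr$ recorded just after the index definitions), yield
\begin{equation*}
F_k \le \delta_k + C\bigl(\|u\|_{L^\Awr(I_k,L^\Rwr)}^\Pmu + \eta^\Pmu\bigr) F_k + C\varepsilon,
\end{equation*}
where $F_k=\|w\|_{L^\Awr(I_k,L^\Rwr)}$ and $\delta_k=\|\Es{(\cdot-T_k)} w(T_k)\|_{L^\Awr(I_k,L^\Rwr)}$, with $\delta_0\le \varepsilon$ by hypothesis. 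Writing $\|u\|_{L^\Awr(I_k,L^\Rwr)}\le F_k+\eta$, choosing $\eta$ so that $C\eta^\Pmu\le 1/4$, and running a continuity argument in the right endpoint of $I_k$ (the map $T\mapsto \|w\|_{L^\Awr([T_k,T],L^\Rwr)}$ is continuous and vanishes at $T_k$), we absorb the $CF_k^{\Pwr}$ term and obtain $F_k\le 2\delta_k + C\varepsilon$ provided $\delta_k+\varepsilon$ is small enough.

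To iterate, note that $\Es{(t-T_{k+1})}w(T_{k+1})$ differs from $\Es{(t-T_k)}w(T_k)$ by a Duhamel term supported on $I_k$, and the same Strichartz/H\"older estimate gives
\begin{equation*}
\delta_{k+1} \le \delta_k + C(F_k^\Pmu + \eta^\Pmu)F_k + C\varepsilon \le (1+C_1)\delta_k + C_2\varepsilon.
\end{equation*}
The Gronwall-type Lemma~\ref{gronwall} of the appendix (or direct iteration of this linear inequality $J$ times) then produces $\delta_k\le C_J\varepsilon$ for every $k\le J$, where $C_J$ depends only on $J$ and hence only on $A$. Choosing $\varepsilon(A)$ small enough that $C_J\varepsilon + \varepsilon$ lies under the smallness threshold of the local bootstrap uniformly in $k$, we obtain $F_k\le C(A)\varepsilon$ for all $k$, and summing $\Awr$-th powers yields
\begin{equation*}
\|w\|_{L^\Awr([0,\infty),L^\Rwr)}^\Awr = \sum_{k=0}^{J-1} F_k^\Awr \le J\, C(A)^\Awr \varepsilon^\Awr,
\end{equation*}
which gives the required bound; that $u\in L^\Awr$ on each $I_k$ follows from $u=w+\widetilde u$.

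The main difficulty is not any single estimate but the bookkeeping: one must synchronize three quantitative thresholds --- the smallness of $\eta$ needed for the local absorption, the smallness of $\delta_k+\varepsilon$ required for the continuity bootstrap on each $I_k$, and the geometric-in-$J$ amplification produced by iterating across the $J=J(A)$ intervals --- and then fix $\varepsilon(A)$ last so that every local bootstrap closes. Packaging this final iteration cleanly is precisely the role of Lemma~\ref{gronwall}.
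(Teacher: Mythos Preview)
Your argument is correct and is precisely the ``standard'' long-time perturbation scheme (as in \cite{KenigM} and \cite{HR2}): partition $[0,\infty)$ into $J=J(A)$ subintervals on which $\widetilde u$ is small in $L^\Awr L^\Rwr$, run a local bootstrap on each piece via the non-admissible Strichartz estimate~\eqref{KATO1}, and propagate the linear-data bound $\delta_k$ across the partition by the elementary recursion $\delta_{k+1}\le (1+C_1)\delta_k+C_2\varepsilon$. One small remark: that recursion is solved by direct iteration (a geometric sum), not by Lemma~\ref{gronwall}, which is a continuous statement; your parenthetical ``or direct iteration'' is in fact the argument.

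The paper deliberately avoids this decomposition. It writes the Duhamel estimate once on the full interval $(0,t)$,
\[
\|\varphi\|_{L^\Awr(0,t)}\le (M+1)\varepsilon + M\|\varphi\|_{L^\Awr(0,t)}^{\Pwr}+ M\|f\varphi\|_{L^{\Bwr'}(0,t)},
\]
with $\varphi(t)=\|w(t)\|_{L^\Rwr}$ and $f(t)=\|\widetilde u(t)\|_{L^\Rwr}^{\Pmu}\in L^{\Awr/\Pmu}(0,\infty)$, and then invokes Lemma~\ref{gronwall} to absorb the \emph{linear} term $\|f\varphi\|_{L^{\Bwr'}}$ in one stroke; a single continuity argument in $T$ then kills the superlinear term $\|\varphi\|^{\Pwr}$. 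The partitioning you carry out explicitly is hidden inside the proof of Lemma~\ref{gronwall}. What this buys is exactly the bookkeeping you flag at the end: the synchronization of the three thresholds (local absorption, bootstrap smallness, geometric amplification in $J$) is replaced by the single explicit choice $\varepsilon(A)\le 2^{-1/(\Pmu)}[(2M+1)\Phi(A^{\Pmu})]^{-(\Pwr)/\Pmu}$, with $\Phi$ the function from Lemma~\ref{gronwall}. Your approach, on the other hand, is more widely known and makes the dependence on $A$ through the number of intervals $J\sim (A/\eta)^\Awr$ completely transparent.
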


\begin{proof}
We let  $w= u-  \widetilde{u} $, so that
\begin{equation} \label{feqw}
iw_t + \Delta  w +  | \widetilde{u} + w |^{\Pmu } (\widetilde{u} + w ) -
| \widetilde{u}|^{\Pmu } \widetilde{u} + e = 0.
\end{equation}
Since
\begin{equation} \label{fLocl}
 \Bigl| | \widetilde{u} + w |^{\Pmu } (\widetilde{u} + w ) - | \widetilde{u}|^{\Pmu } \widetilde{u} \Bigr|
 \le C ( | \widetilde{u} |^{\Pmu } + |w|^{\Pmu } )|w| =C(  | \widetilde{u} |^{\Pmu } |w| +  |w|^{\Pwr }),
\end{equation}
we deduce from the equation~\eqref{feqw}, Strichartz-type estimate
estimate~\eqref{KATO1} and the assumption~\eqref{fHypz}  that there
exists $M>0$ such that
\begin{equation} \label{fPremu}
 \|w\| _{L^{\Awr }((0,t), L ^{ \Rwr  })} \le \varepsilon +
 M  \| \,  | \widetilde{u} |^{\Pmu } |w| +  |w|^{\Pwr } \| _{L^{\Bwr '}((0,t), L ^{ \Rwr  '})}+
 M\varepsilon ,
\end{equation}
for every  $t>0$.
Since $ \| \,  | \widetilde{u} |^{\Pmu } |w|\, \| _{L ^{ \Rwr  '}}\le
 \| \widetilde{u} (t)\| _{ L^{\Rwr } }^{\Pmu }  \|w(t)\| _{ L^{\Rwr } }$,
 we deduce from~\eqref{fPremu} that
\begin{equation} \label{fDepu}
 \|\varphi \| _{ L^{\Awr }(0,t) } \le (M+1)\varepsilon + M  \|\varphi \| _{ L^{\Awr }(0,t) }  ^{ \Pwr }  + M  \| f \varphi  \| _{ L^{\Bwr '} (0,t)},
\end{equation}
where we have set
 \begin{equation*}
 \varphi (t)=    \|w(t)\| _{ L^{\Rwr } },\quad f(t)=   \| \widetilde{u} (t)\| _{ L^{\Rwr } }^{\Pmu } .
 \end{equation*}
 Let
 \begin{equation} \label{fDept}
 \varepsilon (A) \le
2^{-\frac {1} {p-1}} [ (2M+1)\Phi (A^{\Pmu })] ^{- \frac {\Pwr } {\Pmu }} ,
 \end{equation}
 where  $\Phi $ is  given by Lemma~\ref{gronwall}.
Observe that
 \begin{equation}  \label{fDepd}
  \| f \| _{ L^{\frac {(\Pwr ) \Bwr '} {\Pmu  }} (0,\infty )} =  \| f \| _{ L^{ \frac {\Awr  } {\Pmu  }} (0,\infty )} =   \|  \widetilde{u} \| _{ L^{\Awr } ((0,\infty ), L^{\Rwr })}^{\Pmu }
  \le A^{\Pmu }.
 \end{equation}
Given any  $0<T\le \infty $ such that $   \|\varphi \| _{ L^{\Awr
}(0,T) }^{\Pwr} \le \varepsilon \le \varepsilon (A)$, we deduce from~\eqref{fDepu},
\eqref{fDepd} and   Lemma~\ref{gronwall} that  $ \|\varphi \| _{
L^{\Awr }(0,T) } \le (2M+1)\varepsilon  \Phi (A^{\Pmu }) $. Applying~\eqref{fDept}, we see
that $   \|\varphi \| _{ L^{\Awr }(0,T) }^{\Pwr} < \varepsilon ^{\Pwr } \varepsilon (A)^{-\Pmu }
/2\le \varepsilon /2$. It easily follows that we may let $T\to \infty $, so that
$ \|\varphi \| _{
L^{\Awr }(0,\infty ) } \le (2M+1) \Phi (A^{\Pmu }) \varepsilon $.
This is the desired estimate with $C(A)= (2M+1) \Phi (A^{\Pmu }) $.
\end{proof}

\section{Profile decomposition} \label{sProfile}

The following profile decomposition property is an essential ingredient in the proof of
Theorem~\ref{main}.
A quite similar property is established in~\cite{Keraani}, and applied in~\cite{KenigM} to the study of the energy critical NLS.
(An analogous result  for the wave equation is proved in~\cite{BahouriG}.)
A result similar to Theorem~\ref{ProfileE} is proved in~\cite{DHR}, adapted to the 3D cubic NLS.

\begin{thm} \label{ProfileE}
Let $(\phi_n)_{n \ge 1}$ be a bounded sequence of $H^1(\R^\Dim )$.
There is a subsequence, which we
still denote by $(\phi_n)_{n \ge 1}$, and sequences
$(\psi ^j) _{ j\ge 1 } \subset H^1 (\R^\Dim ) $,
$(W_n^j)_{n,j\ge 1} \subset  H^1(\R^\Dim )$,
$(t_n^j) _{ n,j\ge 1 }\subset [0,\infty )$, $( \overline{t}^j) _{ j\ge 1 } \subset [0, \infty ) \cup \{ \infty  \}$
 and $(x_n^j) _{ n,j\ge 1 }  \subset \R^\Dim $
such that for every $\ell \ge 1$
\begin{equation}\label{expan}
\phi_n = \sum_{j=1}^{\ell} \Esm{t_n^j} \psi^j(\cdot - x_n^j) + W_n^ \ell,
\end{equation}
 and
\begin{gather}
t_n^\ell  \goto  _{ n\to \infty  }  \overline{t}^\ell ,   \label{fLimtj}  \\
\| \phi_n \|_{H^\lambda }^2 - \sum_{j = 1}^\ell \| \psi^j \|_{H^\lambda }^2 - \| W_n^\ell
\|_{H^\lambda }^2 \goto _{ n \to \infty  }0, \quad \forall 0\le \lambda \le 1, \label{fAsplitu} \\
E(\phi_n) - \sum_{j=1}^\ell E (\Esm{t_n^j} \psi^j(\cdot - x_n^j)) - E(W_n^\ell ) \goto _{ n\to \infty  }0.
 \label{fEsplit}
\end{gather}
Furthermore, there exists $J\in \N \cup \{\infty \}$ such that $\psi ^j\not = 0$ for all $j<J$ and $\psi ^j=0$ for all $j\ge J$, and
\begin{equation} \label{div}
\lim_{n \to \infty}|t_n^i - t_n^j| + |x_n^i - x_n^j| = \infty ,
\end{equation}
for all $\ell\ge 1$ and $1\le i\not = j<J$.
In addition,
\begin{equation}\label{fAsmall}
\limsup  _{ n\to \infty  } \| \Es{\cdot }  W_n^{\ell } \|_{L^\Awr ((0,\infty ), L^\Rwr )} \goto _{ \ell \to \infty  }0.
\end{equation}
\end{thm}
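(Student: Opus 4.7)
The plan is to build the decomposition iteratively, peeling off one ``bubble of concentration'' at a time via an inverse Strichartz-type extraction lemma: at each step, if the free evolution of the current remainder still has non-negligible $L^\Awr L^\Rwr$ norm, the lemma produces parameters $(t_n^\ell,x_n^\ell)$ and a nonzero profile $\psi^\ell$; otherwise the process terminates and we set $\psi^j=0$ for $j\ge\ell$.

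The central tool, and in my view the main technical obstacle, is the following inverse Strichartz inequality. If $(f_n)$ is bounded in $H^1(\R^\Dim)$ and $\limsup_n\|\Es{\cdot}f_n\|_{L^\Awr(\R,L^\Rwr)}\ge\eta>0$, then after extraction there exist $(t_n)\subset\R$, $(x_n)\subset\R^\Dim$ and $\psi\in H^1(\R^\Dim)$ with $\|\psi\|_{\dot H^\Swr}\ge c\eta^\theta$ (for constants $c,\theta>0$ depending only on $\sup_n\|f_n\|_{H^1}$, $\Dim$, $\Pmu$) such that $\Es{t_n}f_n(\cdot+x_n)\weakcv\psi$ in $H^1(\R^\Dim)$. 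Because we work at $H^1$ regularity while the Strichartz estimate~\eqref{fStrt} is $\dot H^\Swr$-critical with $\Swr$ strictly subcritical, no spatial scale parameter is needed (only translations in space and time). I would prove this lemma following Keraani~\cite{Keraani}: a refined, frequency-localized Strichartz estimate implies that the $L^\Awr L^\Rwr$ mass concentrates on a ball at some frequency scale, so $\Es{t_n}f_n(\cdot+x_n)$ admits a weak limit of definite size in $\dot H^\Swr$.

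Given the lemma, set $W_n^0=\phi_n$, and at step $\ell\ge 1$ apply it to $W_n^{\ell-1}$ to obtain $(t_n^\ell,x_n^\ell,\psi^\ell)$, then define $W_n^\ell=W_n^{\ell-1}-\Esm{t_n^\ell}\psi^\ell(\cdot-x_n^\ell)$. By construction, $\Es{t_n^\ell}W_n^\ell(\cdot+x_n^\ell)\weakcv 0$ in $H^1$, hence also in $H^\lambda$ for every $\lambda\in[0,1]$. A standard Hilbert-space identity (using that the cross-term $(\psi^\ell,\Es{t_n^\ell}W_n^\ell(\cdot+x_n^\ell))_{H^\lambda}$ vanishes in the limit) then yields the Pythagoras relation~\eqref{fAsplitu} at each $\lambda$. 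Extracting a further subsequence makes $t_n^\ell\to\overline{t}^\ell\in[0,\infty]$, giving~\eqref{fLimtj}, and a diagonal extraction realizes these properties simultaneously for all $\ell$. The asymptotic orthogonality~\eqref{div} is proved by induction on the higher index: if $t_n^j-t_n^i\to\tau$ and $x_n^j-x_n^i\to y$ for some $i<j$, then rewriting $\Es{t_n^j}W_n^{j-1}(\cdot+x_n^j)$ in terms of $\Esm{\tau}$-translates of the earlier profiles reveals that its weak limit $\psi^j$ can be expressed through the $i$-th weak limit, contradicting either $\Es{t_n^i}W_n^i(\cdot+x_n^i)\weakcv 0$ or the nontriviality of $\psi^j$.

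To close, set $A_\ell:=\limsup_n\|\Es{\cdot}W_n^{\ell-1}\|_{L^\Awr(\R,L^\Rwr)}$; the quantitative bound $\|\psi^\ell\|_{\dot H^\Swr}^2\ge c A_\ell^{2\theta}$ combined with~\eqref{fAsplitu} at $\lambda=\Swr$ yields $\sum_\ell A_\ell^{2\theta}\le C\sup_n\|\phi_n\|_{\dot H^\Swr}^2$, so $A_\ell\to 0$, which is precisely~\eqref{fAsmall}. The kinetic part of the energy decoupling~\eqref{fEsplit} follows from~\eqref{fAsplitu} at $\lambda=1$; for the $L^\Ppu$ part, all cross-terms in the expansion of $\|\sum_{j\le\ell}\Esm{t_n^j}\psi^j(\cdot-x_n^j)+W_n^\ell\|_{L^\Ppu}^\Ppu$ must vanish in the limit, which is a consequence of~\eqref{div}: when $|t_n^i-t_n^j|\to\infty$ the $L^\Rwr$ dispersive decay of $\Esm{(t_n^i-t_n^j)}$ kills the cross-term, while when $|t_n^i-t_n^j|$ stays bounded but $|x_n^i-x_n^j|\to\infty$ the spatial-separation estimate~\eqref{eRCpu:d} of Lemma~\ref{eRCpu} applies to the relatively compact family obtained after a time-translation. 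Cross-terms involving $W_n^\ell$ are controlled similarly using $\Es{t_n^j}W_n^\ell(\cdot+x_n^j)\weakcv 0$.
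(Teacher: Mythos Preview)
Your strategy is correct and follows the standard Keraani-type profile decomposition, but the paper proceeds along a noticeably different and more elementary route at the extraction step. Rather than proving an inverse Strichartz inequality directly for the scaling-critical norm $L^\Awr((0,\infty),L^\Rwr)$ via refined (frequency-localized) Strichartz estimates as you propose, the paper's key lemma (Lemma~\ref{eAuxd}) extracts against the supremum norm $\|\Es{\cdot}v_n\|_{L^\infty((0,\infty),L^\Rwr)}$. This makes the extraction almost trivial: after a single Littlewood--Paley cutoff $\chi_r$ at an explicit scale, an $L^2$--$L^\infty$ interpolation reduces matters to a pointwise lower bound $|\Es{t_n}(\chi_r\star v_n)|(x_n)\gtrsim A^{\Dim/2\Lwr}$, which immediately furnishes the parameters $(t_n,x_n)$ and a weak limit $\psi$ with $\|\psi\|_{H^1}$ bounded below. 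The paper then recovers the $L^\Awr L^\Rwr$ smallness~\eqref{fAsmall} only at the very end, by interpolating $\|\cdot\|_{L^\Awr_t}\le\|\cdot\|_{L^\Qwr_t}^\theta\|\cdot\|_{L^\infty_t}^{1-\theta}$ together with the admissible Strichartz estimate~\eqref{fStru}. Your direct approach is standard and correct (and is closer to~\cite{Keraani}), but it requires a genuine refined Strichartz estimate; the paper's $L^\infty_t$ detour avoids this entirely.

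A second, smaller difference: you propose to establish the $L^{\Ppu}$ decoupling needed for~\eqref{fEsplit} by expanding the full sum at level $\ell$ and killing all $\binom{\ell+1}{2}$ cross-terms via~\eqref{div} and the weak convergence of $W_n^\ell$. The paper instead proves $L^{\Rwr}$ decoupling \emph{incrementally}, as part of the extraction lemma itself (property~\eqref{fAuxd:s}): at each step only the single cross-term between $\Esm{t_n}\psi$ and $W_n$ must vanish, which follows from $\Es{t_n}W_n(\cdot+x_n)\weakcv 0$ and a dichotomy on whether $t_n$ converges or diverges. Summing these one-step identities gives~\eqref{fEsplit} directly, bypassing the combinatorics of your expansion. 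Both arguments work; the paper's is tidier.
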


Theorem~\ref{ProfileE} is proved by iterative application of the following lemma.

\begin{lem}  \label{eAuxd}
Let $a>0$ and let $(v_n) _{ n\ge 1 } \subset H^1 (\R^\Dim ) $ satisfy
\begin{equation} \label{fAuxd:u}
  \limsup _{ n\ge 1 } \|v_n\| _{ H^1 } \le a <\infty .
\end{equation}
If
\begin{equation} \label{fAuxd:d}
\| \Es{\cdot } v_n \|_{L^\infty((0,\infty ), L^\Rwr )} \goto _{ n\to \infty  } A,
\end{equation}
then there exist a subsequence, which we still denote by $(v_n) _{ n\ge 1 }$, and sequences $(t_n) _{ n\ge 1 }\subset [0,\infty )$, $(x_n) _{ n\ge 1 }\subset \R^\Dim $, $\psi \in H^1 (\R^\Dim ) $ and
$(W_n) _{ n\ge 1 }\subset H^1 (\R^\Dim ) $
such that
\begin{equation} \label{fAuxd:t}
v_n= \Esm{t_n} \psi  ( \cdot  -x_n) + W_n,
\end{equation}
with
\begin{equation} \label{fAuxd:qbd}
\Es{t_n} v _n (\cdot +x_n) \weakcv  _{ n\to \infty  }\psi ,
\end{equation}
or, equivalently,
\begin{equation} \label{fAuxd:qb}
\Es{t_n} W_n (\cdot +x_n) \weakcv  _{ n\to \infty  }0,
\end{equation}
in $H^1 (\R^\Dim ) $,
\begin{equation}  \label{fAuxd:q}
 \|v_n\| _{ \dot H^\lambda  }^2 -   \|\psi \| _{ \dot H^\lambda  }^2 -  \|W_n\| _{ \dot H^\lambda  }^2
  \goto _{ n\to \infty  }0,
\end{equation}
for all $0\le \lambda \le 1$ and
\begin{equation}  \label{fAuxd:s}
 \|v_n\| _{L^{\Rwr } }^{\Ppu } -   \|\Esm{t_n} \psi (\cdot - x_n) \|_{L^{\Rwr } }^{\Ppu } -  \|W_n\| _{L^{\Rwr } }^{\Ppu }  \goto _{ n\to \infty  }0.
\end{equation}
Moreover,
\begin{equation}  \label{fAuxd:c}
 \|\psi \| _{H^1 } \ge \nu A^{ \frac {\Dim -2 \Lwr ^2} {2\Lwr (1-\Lwr )} }
 a^{-  \frac {\Dim -2 \Lwr } {2\Lwr (1-\Lwr )}} ,
\end{equation}
where
\begin{equation} \label{fDLwr}
\Lwr = \frac {\Dim \Pmu } {2(\Ppu )} \in  \Bigl( 0, \min \Bigl\{ 1, \frac {\Dim} {2} \Bigr\} \Bigr),
\end{equation}
and the constant $\nu >0$ is independent of  $a$, $A$ and $(v_n) _{ n\ge 1 }$.
Finally, if $A=0$, then for every sequences satisfying~\eqref{fAuxd:t} and~\eqref{fAuxd:qb}, we must have $\psi =0$.
\end{lem}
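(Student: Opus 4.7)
The plan is to extract a single profile $\psi$ as the weak $H^1$-limit of the sequence $(v_n)$ after suitable time and space translations, and then verify the asymptotic orthogonality of the relevant norms by a Brezis--Lieb-type argument. Since $\|\Es{\cdot}v_n\|_{L^\infty((0,\infty),L^\Rwr)}\to A$, first pick $t_n\ge 0$ with $\|\Es{t_n}v_n\|_{L^\Rwr}\to A$ and, after passing to a subsequence, arrange $t_n\to\overline{t}\in[0,\infty]$. The translated sequence $U_n:=\Es{t_n}v_n$ remains bounded by $a$ in $H^1$. To locate the spatial centers, cover $\R^\Dim$ by unit cubes $(Q_j)_j$ and use the Sobolev embedding $H^1(Q_j)\hookrightarrow L^\Rwr(Q_j)$ (valid since $\Rwr<2^*$) together with
\begin{equation*}
\|U_n\|_{L^\Rwr}^\Rwr\le\Bigl(\sup_j\|U_n\|_{L^\Rwr(Q_j)}\Bigr)^{\Rwr-2}\sum_j\|U_n\|_{L^\Rwr(Q_j)}^2\le C\Bigl(\sup_j\|U_n\|_{L^\Rwr(Q_j)}\Bigr)^{\Rwr-2}\|U_n\|_{H^1}^2
\end{equation*}
to select $x_n$ for which $\|U_n\|_{L^\Rwr(Q(x_n,1))}$ is bounded below in terms of $A$ and $a$. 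After a further extraction, $U_n(\cdot+x_n)\weakcv\psi$ in $H^1$, and Rellich compactness on the unit cube delivers $\psi\ne 0$ together with a quantitative lower bound on $\|\psi\|_{H^1}$. Setting $W_n:=v_n-\Esm{t_n}\psi(\cdot-x_n)$ yields~\eqref{fAuxd:t},~\eqref{fAuxd:qbd} and~\eqref{fAuxd:qb} directly.

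The $\dot H^\lambda$-orthogonality~\eqref{fAuxd:q} reduces, via the isometry of $\Es{t_n}$ on $\dot H^\lambda$ and of spatial translations, to
\begin{equation*}
\|U_n(\cdot+x_n)\|_{\dot H^\lambda}^2-\|\psi\|_{\dot H^\lambda}^2-\|U_n(\cdot+x_n)-\psi\|_{\dot H^\lambda}^2\goto_{n\to\infty}0,
\end{equation*}
which is immediate from $U_n(\cdot+x_n)\weakcv\psi$ in $\dot H^\lambda$, the cross term vanishing. For~\eqref{fAuxd:s}, first translate by $x_n$, then split on $\overline t$: if $\overline t<\infty$, then $\Esm{t_n}\psi\to\Esm{\overline t}\psi$ strongly in $L^\Rwr$ and $v_n(\cdot+x_n)\weakcv\Esm{\overline t}\psi$ in $H^1$, so Rellich on an exhausting sequence of balls (with a diagonal extraction for a.e.\ convergence) lets Brezis--Lieb apply; if $\overline t=\infty$, the $L^p$-dispersive decay $\|\Esm{t_n}\psi\|_{L^\Rwr}\to 0$ makes the profile's contribution vanish and the splitting is trivial. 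Finally, if $A=0$, lower semicontinuity gives $\|\psi\|_{L^\Rwr(B(0,R))}\le\liminf_n\|U_n\|_{L^\Rwr}\le A=0$ for every $R$, so $\psi=0$.

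The main obstacle is the quantitative lower bound~\eqref{fAuxd:c}: the cube-covering argument above only produces $\|\psi\|_{H^1}\ge cA^{\Rwr/(\Rwr-2)}a^{-2/(\Rwr-2)}$, which is strictly weaker than the stated $A^{(\Dim-2\Lwr^2)/(2\Lwr(1-\Lwr))}a^{-(\Dim-2\Lwr)/(2\Lwr(1-\Lwr))}$. Matching the sharp exponents requires replacing the cube decomposition by a Keraani-type refined Gagliardo--Nirenberg inequality of the form
\begin{equation*}
\|f\|_{L^\Rwr}^\Rwr\le C\|f\|_{\dot H^1}^{2\Lwr}\|f\|_Y^{\Rwr-2\Lwr},
\end{equation*}
with $Y$ a homogeneous Besov norm $\dot B^{-s}_{\infty,\infty}$ whose exponent $s$ is forced by scaling; the near-extremizer is then captured as a weak $H^1$-limit of a single Littlewood--Paley block of $U_n$, translated by a well-chosen $x_n$. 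Implementing this refinement, and ensuring that the chosen Littlewood--Paley frequency and the translation $x_n$ remain consistent with the extraction of the weak limit $\psi$, is the technical heart of the argument; once the refined inequality is available, the remaining steps proceed as above with the stated exponents.
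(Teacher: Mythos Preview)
Your overall strategy---choose $t_n$ to nearly saturate the $L^\infty_t L^\Rwr$ norm, locate the concentration in space, extract a weak $H^1$ limit $\psi$, and then verify the $\dot H^\lambda$ and $L^\Rwr$ splittings via weak convergence and a Brezis--Lieb/dispersive-decay dichotomy on $\overline t$---matches the paper's proof in outline, and your arguments for \eqref{fAuxd:q}, \eqref{fAuxd:s}, and the final statement are essentially the paper's.

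Where you diverge is the concentration step leading to \eqref{fAuxd:c}. You are right that the cube-covering argument only yields $\|\psi\|_{H^1}\gtrsim A^{\Rwr/(\Rwr-2)}a^{-2/(\Rwr-2)}$; this is weaker than the stated bound, though it would still suffice for the profile decomposition, since any power lower bound of $\|\psi^j\|_{H^1}$ in terms of $A_j$ forces $A_\ell\to 0$ via $\sum_j\|\psi^j\|_{H^1}^2<\infty$. The fix you propose---a refined Gagliardo--Nirenberg inequality with a Besov norm $\dot B^{-s}_{\infty,\infty}$---points in the right direction but is heavier than what the paper does. The paper uses a single Fourier cut-off: with $\widehat{\chi_r}(\xi)=\zeta(\xi/r)$, choose $r\sim(a/A)^{1/(1-\Lwr)}$ so that $\|\Es{t}v_n-\Es{t}(\chi_r\star v_n)\|_{L^\Rwr}\le A/2$ (via $\|f-\chi_r\star f\|_{\dot H^\Lwr}\le r^{-(1-\Lwr)}\|\nabla f\|_{L^2}$ and Sobolev), then interpolate $L^\Rwr$ between $L^2$ and $L^\infty$ to obtain a pointwise lower bound $|\Es{t_n}(\chi_r\star v_n)|(x_n)\gtrsim a^{-(\Dim-2\Lwr)/(2\Lwr)}A^{\Dim/(2\Lwr)}$. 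Since evaluation of $\chi_r\star(\cdot)$ at a point is a bounded linear functional on $\dot H^\Lwr$ (by Plancherel, $|\chi_r\star u|(0)\le\kappa r^{(\Dim-2\Lwr)/2}\|u\|_{\dot H^\Lwr}$), this passes to the weak limit and, after substituting the chosen $r$, gives exactly the exponents in \eqref{fAuxd:c}. No Littlewood--Paley decomposition or Besov norm is needed; one well-chosen frequency scale does the work.
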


\begin{proof}
We first introduce a high frequency cut-off.
Fix a real-valued, radially symmetric function $\zeta \in C^\infty  _\Comp (\R^\Dim )$ such that
$0\le \zeta \le 1$, $\zeta (\xi ) =1$ for $ |\xi |\le 1$ and $ \zeta (\xi ) =0$ for $ |\xi |\ge 2$.
Given $r>0$, define $\chi _r\in \Srn$ by $ \widehat{\chi  _r} (\xi )= \zeta (\xi /r)$.
Since $  |\widehat{\chi _r}| \le 1 $, it  is immediate that
\begin{equation}  \label{fAuxu:d}
\| \chi _r \star u\| _{ \dot H^ \lambda  }\le  \|  u \| _{ \dot H^ \lambda  },
\end{equation}
for all $u\in H^1 (\R^\Dim ) $ and $0\le \lambda \le 1$.
Moreover,
\begin{equation*}
\begin{split}
 \|u- \chi  _r \star u\| _{ \dot H^\lambda  }^2 &= \int  _{ \R^\Dim } |\xi |^{2\lambda  }(1-  \widehat{\chi _r} )^2  | \widehat{u} |^2 \\ &  \le
   \int  _{ \{  |\xi |>r \} } |\xi |^{-2(1-\lambda  )}  |\xi |^2  | \widehat{u} |^2
   \le r^{-2(1-\lambda  )} \|\nabla u\| _{ L^2 }^2,
\end{split}
\end{equation*}
so that
 \begin{equation}  \label{fAuxu}
 \|u - \chi _r \star u\| _{ \dot H^\lambda  } \le r^{-(1-\lambda  ) } \|\nabla u\| _{ L^2 },
 \end{equation}
 for all $u\in H^1 (\R^\Dim ) $ and $0\le \lambda \le 1$.
In addition, by Plancherel's formula,
\begin{equation*}
 \chi _r \star u  (0) = \int  _{ \R^\Dim  }\chi _r (x) u(-x)= (-1)^\Dim \int  _{ \R^\Dim  } \widehat {\chi _r} (\xi )  \widehat{ u}(\xi ) .
\end{equation*}
If $\Lwr $ is defined by~\eqref{fDLwr}, then $\Lwr <N/2$, so that
\begin{equation*}
 | \chi _r \star u | (0)\le  \int  _{ \{  |\xi |<2r \} } | \widehat{ u}| \le
 \|  u \| _{ \dot H^ \Lwr   } \Bigl(  \int  _{ \{  |\xi |<2r \} }  |\xi |^{-2\Lwr  } \Bigr)^{\frac {1} {2}}.
\end{equation*}
Thus we see that
\begin{equation} \label{fAuxu:t}
 | \chi _r \star u | (0)\le  \kappa r^{\frac {\Dim -2\Lwr  } {2}} \|  u \| _{ \dot H^ \Lwr   },
\end{equation}
for some constant $\kappa $ independent of $r>0$ and $u\in H^1 (\R^\Dim ) $.
Note also that by Sobolev's embedding
\begin{equation} \label{fSobeu}
 \|u\| _{ L^\Rwr } \le \beta  \|u\| _{ \dot H^\Lwr },
\end{equation}
for some constant $\beta >0$.

If  $A=0$, we let $\psi =0$, $W_n = v _n$, $t_n=0$ and $x_n=0$  for all $n\ge 1$. Properties~\eqref{fAuxd:t}, \eqref{fAuxd:q}, \eqref{fAuxd:s} and~\eqref{fAuxd:c} are immediate.
Furthermore, since $A=0$, it follows in particular that $v _n\to 0$ in $L^\Rwr (\R^\Dim )$
so that~\eqref{fAuxd:qb} holds.

We now suppose $A>0$. Since $\Es{t}$ is an isometry of $\dot H^\Lwr (\R^\Dim )$,
it follows from~ \eqref{fSobeu}, \eqref{fAuxu} and~\eqref{fAuxd:u}  that for $n$ large
\begin{equation*}
 \|\Es{t} v_n - \Es{t} ( \chi _r \star v_n)\| _{ L^\Rwr }\le 2 \beta
 r^{- (1-\Lwr ) }   a \le \frac {A} {2},
\end{equation*}
by choosing
\begin{equation} \label{fDefr}
r=  \Bigl( \frac {4\beta a} {A} \Bigr)^{ \frac  {1} {1-\Lwr }
 } .
\end{equation}
Applying~\eqref{fAuxd:d}, it follows that
\begin{equation} \label{fEstin}
\| \Es{\cdot } (\chi _r \star v_n) \|_{L^\infty((0,\infty ), L^\Rwr)} \ge \frac {A} {4},
\end{equation}
for all sufficiently large $n$.
Note also that (still for $n$ large)
\begin{multline*}
 \|  \Es {\cdot } ( \chi _r \star v_n)\|_{L^\infty((0, \infty ), L^\Rwr)} \\
 \le  \| \Es {\cdot } ( \chi _r \star v_n) \|_{L^\infty((0, \infty ), L^2 )}
 ^{\frac { \Dim -2\Lwr } {\Dim }}
  \| \Es {\cdot } ( \chi _r \star v_n) \|_{L^\infty((0, \infty ), L^\infty )} ^{\frac {2\Lwr } {\Dim }}
 \\  \le (2a) ^{\frac { \Dim -2\Lwr } {\Dim }}
 \| \Es {\cdot } ( \chi _r \star v_n) \|_{L^\infty((0, \infty ), L^\infty )} ^{\frac {2\Lwr } {\Dim }} ,
\end{multline*}
where we used~\eqref{fAuxu:d} in the last inequality.
Thus we deduce from~\eqref{fEstin} that
\begin{equation*}
 \| \Es {\cdot } ( \chi _r \star v_n) \|_{L^\infty( (0,\infty ) , L^\infty )} \ge (2a)
 ^{- \frac {\Dim -2\Lwr } {2\Lwr }}
  \Bigl( \frac {A} {4} \Bigr)^{ \frac {\Dim } {2\Lwr }},
\end{equation*}
for all large $n$. It follows that there exist $(t_n) _{ n\ge 1 }\subset  [0,\infty ) $ and $(x_n) _{ n\ge 1 } \subset \R^\Dim $ such that
\begin{equation} \label{fEstind}
|\Es {t_n } ( \chi _r \star v_n)| (x_n) \ge (4a)
 ^{- \frac {\Dim -2\Lwr } {2\Lwr }}
  \Bigl( \frac {A} {4} \Bigr)^{ \frac {\Dim } {2\Lwr }},
\end{equation}
for all large $n$. Let
\begin{equation*}
w_n(\cdot )= \Es {t_n } v_n (\cdot + x_n).
\end{equation*}
Since $ \|w_n\| _{ H^1 }=  \|v_n\| _{ H^1 }$, it follows from~\eqref{fAuxd:u} that
there exists $\psi \in H^1 (\R^\Dim ) $ such that, after possibly extracting a subsequence,
\begin{equation}  \label{fEstinq}
w_n \weakcv _{ n\to \infty  } \psi ,
\end{equation}
in $H^1 (\R^\Dim ) $.
Since $\Es{t}$ commutes with the convolution with $\chi  _r$, we see that $\Es {t_n } ( \chi _r \star v_n) (x_n)=  ( \chi _r \star w_n) (0)$.
Applying~\eqref{fEstind}, \eqref{fEstinq}  and~\eqref{fAuxu:t}, we obtain
\begin{equation*}
(4a) ^{- \frac {\Dim -2\Lwr } {2\Lwr }}
  \Bigl( \frac {A} {4} \Bigr)^{ \frac {\Dim } {2\Lwr }} \le
   | \chi _r \star \psi |(0) \le \kappa (\Lwr ) r^{\frac {\Dim -2\Lwr } {2}} \|\psi \| _{ \dot H^\Lwr } .
\end{equation*}
Using~\eqref{fDefr}, we deduce that~\eqref{fAuxd:c} holds.
Setting $W_n= v_n - \Esm{t_n} \psi (\cdot -x_n)$, we obtain~\eqref{fAuxd:t}, and
\eqref{fAuxd:qb}  follows from~\eqref{fEstinq}.
We note that by~\eqref{fAuxd:t}, \eqref{fAuxd:qbd} and~\eqref{fAuxd:qb} are equivalent.
Furthermore, given any $0\le \lambda \le 1$, it follows from~\eqref{fAuxd:t} that
\begin{equation*}
\begin{split}
 \|v_n\| _{ \dot H^\lambda  }^2&=   \| \Esm{t_n} \psi (\cdot -x_n) \| _{ \dot H^\lambda  }^2+  \|W_n\| _{ \dot H^\lambda  }^2
 + 2 ( \Esm{t_n} \psi (\cdot -x_n), W_n)  _{ \dot H^\lambda  }\\
 &=   \|\psi  \| _{ \dot H^\lambda  }^2+  \|W_n\| _{ \dot H^\lambda  }^2
 + 2 ( \Esm{t_n} \psi (\cdot -x_n), v_n - \Esm{t_n} \psi (\cdot -x_n))  _{ \dot H^\lambda  }.
\end{split}
\end{equation*}
Applying~\eqref{fEstinq}, we deduce that~\eqref{fAuxd:q} holds.
We next prove~\eqref{fAuxd:s}
and we set
\begin{equation*}
f_n =   \Bigl| \|v_n\| _{L^{\Rwr } }^{\Ppu } -   \|\Esm{t_n} \psi (\cdot - x_n) \|_{L^{\Rwr } }^{\Ppu } -  \|W_n\| _{L^{\Rwr } }^{\Ppu } \Bigr|.
\end{equation*}
We recall that  for every $P>1$ and $\ell \ge 2$ there exists a constant $C_{P,\ell} $ such that
 \begin{equation} \label{fGerd}
  \left| \, \Bigl| \sum_{  j=1 }^\ell z_j  \Bigr|^{P} -\sum_{ j=1 }^\ell  |z_j|^{P} \right| \le  C_{P,\ell } \sum_{ j\not = k }  |z_j| \, |z_k|^{P-1 },
 \end{equation}
  for all $(z_j) _{ 1\le j\le \ell }\subset {\mathbb {C}} ^\ell$.
 (This is inequality~(1.10) in~\cite{Gerard}.)
It follows from~\eqref{fGerd} that there exists a constant $C$ such that
 for all $z_1, z_2\in {\mathbb {C}} $,
 \begin{equation} \label{fGerq}
\Bigl|  |z_1+ z_2|^{\Ppu } -|z_1|^{\Ppu } -|z_2|^{\Ppu }  \Bigr|  \le  C  |z_1| \, |z_2|
(|z_1|^{\Pmu }+  |z_2|^{\Pmu }).
 \end{equation}
We deduce from~\eqref{fAuxd:t} and~\eqref{fGerq} that
 \begin{equation*}
 f_n \le C \int  _{ \R^\Dim  }  |\Es{t_n} \psi | \,  |W_n (\cdot +x_n)| h_n,
 \end{equation*}
where
\begin{equation*}
h_n =  |\Es{t_n} \psi | ^{\Pmu } +  |W_n (\cdot +x_n)| ^{\Pmu }.
\end{equation*}
Note that
\begin{equation*}
 \|h_n \| _{ L^{\frac {\Ppu } {\Pmu }} }\le   C( \| \Es{t_n} \psi  \| _{ L^{\Rwr } }^{\Pmu }+
 \| W_n\| _{ L^{\Rwr } }^{\Pmu }) \le C (  \|\psi \| _{ H^1 } +  \|W_n\| _{ H^1 } )^{\Pmu }
 \le C.
\end{equation*}
Assume by contradiction there exist $\varepsilon >0$ and a sequence $n_k \to \infty $ such that $f _{ n_k }\ge \varepsilon $.
By possibly extracting, we may assume that either $ |t _{ n_n }|\to \infty $ or else $t _{ n_k  }\to  \overline{t} \in \R $. In the first case,
\begin{equation*}
f _{ n_k } \le  C \| \Es{t_{n_k}} \psi  \| _{ L^{\Rwr } } \| W _{n_k} \| _{ L^{\Rwr } }  \|h_{n_k} \| _{ L^{\frac {\Ppu } {\Pmu }} } \le C \| \Es{t_{n_k}} \psi  \| _{ L^{\Rwr } } \goto _{ k\to \infty  }0,
\end{equation*}
since $\psi \in H^1 (\R^N ) $ and $ |t _{ n_k }|\to \infty $. (See e.g. Corollary~2.3.7 in~\cite{Css}.)
This is absurd.
In the second case, it follows from~\eqref{fAuxd:qb} that $W_{n_k} (\cdot  + x_{n_k}) \weakcv 0$ in  $H^1 (\R^\Dim ) $ as $k\to \infty $. By Sobolev's embedding, we deduce that
$W_{n_k} (\cdot  + x_{n_k}) \to 0$ in $L^{\Rwr } (B _R) $ strongly for every $R>0$, where $B_R$ is the ball of $ \R^N $ with center $0$ and radius $R$.
Note also that $\Es{t _{ n_k} } \psi $ belongs to a compact subset of $H^1 (\R^\Dim ) $, hence of
$L^{\Rwr } (\R^\Dim ) $.  Thus for every $\delta >0$, there exists $R$ such that $ \| \Es{t _{ n_k} } \psi  \| _{ L^{\Rwr } (\{  |x|>R\})} <\delta $. Using the boundedness of $\Es{t _{ n_k} } \psi $ and $W _{ n_k }$ in $L^{\Rwr } (\R^\Dim )$ and that of $h _{ n_k }$ in $L^{\frac {\Ppu } {\Pmu }}  (\R^\Dim )$, we estimate
\begin{equation*}
f _{ n_k } \le C  \| \Es{t _{ n_k} } \psi \| _{ L^{\Rwr  }(\{  |x|>R\})}  + C \| W _{ n_k }
 (\cdot +x _{ n_k })\| _{ L^{\Rwr  }(\{  |x|<R\})} .
\end{equation*}
We first choose $R$ large enough so that the first term on the right hand side is smaller than $\varepsilon /4$, then $k_0$ large enough so that the second term is also less than $\varepsilon /4$ for $k\ge k_0$, and we deduce that $f _{ n_k }\le \varepsilon /2$ for $k\ge k_0$. This is absurd and proves~\eqref{fAuxd:s}.
Finally, if $A=0$, then in particular $  \|\Es {t_n } v_n \| _{ L^\Rwr } \to 0$.
Thus $\psi =0$ by~\eqref{fAuxd:qbd}. This shows the last statement of the lemma and completes the proof.
\end{proof}

Before proceeding to the proof of Theorem~\ref{ProfileE}, we prove the following property.
\begin{lem}  \label{eAuxt}
Let $(t_n) _{ n\ge 1 }\subset \R$ and $(x_n) _{ n\ge 1 } \subset \R^\Dim $
satisfy
\begin{equation} \label{fAuxt:d}
 |t_n| + |x_n|\goto  _{ n\to \infty  }\infty .
\end{equation}
It follows that
\begin{equation} \label{fAuxt:u}
\Es{t_n} \psi (\cdot +x_n) \weakcv  _{ n\to \infty  }0,
\end{equation}
in $H^1 (\R^\Dim ) $ for all $\psi  \in H^1 (\R^\Dim ) $.
Moreover, if $(z_n) _{ n\ge 1 }\subset H^1 (\R^\Dim ) $ and $\psi \in H^1 (\R^\Dim ) $ satisfy
\begin{equation}  \label{fAuxt:t}
z_n \weakcv  _{ n\to \infty  }0, \quad \Es{t_n} z_n (\cdot +x_n) \weakcv  _{ n\to \infty  } \psi ,
\end{equation}
in $H^1 (\R^\Dim ) $ and if $\psi \not = 0$, then~\eqref{fAuxt:d} holds.
\end{lem}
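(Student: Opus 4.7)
The plan for \eqref{fAuxt:u} is to test against a general $\phi\in H^1(\R^\Dim)$ and reduce, by unitarity of $\Es{t}$ on $L^2$ together with the commutation $\nabla\Es{t}=\Es{t}\nabla$, to showing that
\[
(\Es{t_n}\psi(\cdot+x_n),\phi)_{H^1}=(\Es{t_n}\psi,\phi(\cdot-x_n))_{L^2}+(\Es{t_n}\nabla\psi,\nabla\phi(\cdot-x_n))_{L^2}\goto_{n\to\infty}0,
\]
so it is enough to prove $I_n(f,g):=(\Es{t_n}f,g(\cdot-x_n))_{L^2}\to 0$ for arbitrary $f,g\in L^2(\R^\Dim)$, applied once with $(f,g)=(\psi,\phi)$ and once with $(\nabla\psi,\nabla\phi)$.

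To prove this scalar convergence, I would pass to a subsequence and distinguish two cases. If $|t_n|\to\infty$, approximate $f$ and $g$ in $L^2$ by Schwartz functions $f_\varepsilon,g_\varepsilon$; Cauchy--Schwarz controls the difference $|I_n(f,g)-I_n(f_\varepsilon,g_\varepsilon)|$ uniformly in $n$ by $C\varepsilon$, while for the smooth approximants the classical dispersive estimate $\|\Es{t_n}f_\varepsilon\|_{L^\infty}\le C|t_n|^{-\Dim/2}\|f_\varepsilon\|_{L^1}$ combined with $\|g_\varepsilon\|_{L^1}<\infty$ forces $I_n(f_\varepsilon,g_\varepsilon)\to 0$. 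If instead $(t_n)$ remains bounded, extract $t_n\to t\in\R$; then \eqref{fAuxt:d} forces $|x_n|\to\infty$, the strong continuity of $(\Es{t})$ on $L^2$ gives $\Es{t_n}f\to \Es{t}f$ strongly, and $g(\cdot-x_n)\weakcv 0$ in $L^2$ by translation to infinity of a fixed $L^2$ function, so weak--strong duality yields $I_n(f,g)\to 0$. Since every subsequence has a further subsequence in one of these two cases, \eqref{fAuxt:u} follows.

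For the second statement, the plan is a contradiction argument based on unitarity. Since $\Es{t_n}$ is an isometry on $H^1$ and commutes with translations, for every $\phi\in H^1(\R^\Dim)$,
\[
(\Es{t_n}z_n(\cdot+x_n),\phi)_{H^1}=(z_n,\Esm{t_n}\phi(\cdot-x_n))_{H^1}.
\]
Assume by contradiction that $|t_n|+|x_n|$ is bounded along a subsequence, and extract so that $t_n\to t\in\R$ and $x_n\to x\in\R^\Dim$. Strong continuity of $(\Esm{t})$ on $H^1$ and continuity of translation in $H^1$ give $\Esm{t_n}\phi(\cdot-x_n)\to\Esm{t}\phi(\cdot-x)$ strongly in $H^1$, so by weak--strong pairing with $z_n\weakcv 0$ the right-hand side tends to $0$. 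Hence $\Es{t_n}z_n(\cdot+x_n)\weakcv 0$ in $H^1$, forcing $\psi=0$, which contradicts the hypothesis $\psi\not=0$. Therefore \eqref{fAuxt:d} must hold.

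The only delicate point is the first case in \eqref{fAuxt:u}, where the dispersive $L^1\to L^\infty$ decay is needed together with a density argument; the rest is routine weak--strong duality using the unitarity of $(\Es{t})$ on $H^1$.
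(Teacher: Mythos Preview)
The proposal is correct and follows essentially the same approach as the paper: in both, one passes to subsequences, uses the $L^1\to L^\infty$ dispersive estimate when $|t_n|\to\infty$, and uses weak convergence of translations when $|x_n|\to\infty$ with $t_n$ bounded; the second statement is handled identically in both by extracting convergent $t_n\to t$, $x_n\to x$ and invoking weak--strong duality. The only cosmetic differences are that the paper reduces by density to Schwartz test functions at the outset and phrases the bounded-$t_n$ case via relative compactness (Lemma~\ref{eRCpu}), whereas you reduce to scalar $L^2$ pairings and use weak--strong duality directly, which is arguably a bit cleaner.
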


\begin{proof}
We prove the first statement, so we assume~\eqref{fAuxt:d}.
By a standard density argument, we need only show that for every $\psi,\zeta   \in \Srn$, $( \Es{t _{ n }} \psi (\cdot +x _{ n }) , \zeta  ) _{ H^1 }\to 0$ as $n\to \infty $.
Assume by contradiction that there exist $\psi , \zeta  \in \Srn$, $\varepsilon >0$ and sequences
$(t _{ n_k }) _{ k\ge 1 }$ and $(x _{ n_k }) _{ k\ge 1 }$ satisfying~\eqref{fAuxt:d}
 such that $|( \Es{t _{ n_k }} \psi (\cdot +x _{ n_k }) , \zeta  ) _{ H^1 } |\ge \varepsilon $. By considering a subsequence, we may assume that either $t _{ n_k }\to \infty $ or else $t _{ n_k }$ is bounded. In the first case,
\begin{equation*}
\begin{split}
|( \Es{t _{ n_k }} \psi (\cdot +x _{ n_k }) , \zeta  ) _{ H^1 }|  &= |( \psi , \Esm{t _{ n_k }} \zeta  (\cdot -x _{ n_k }) ) _{ H^1 }| \\ &
 \le  |t _{ n_k }| ^{ -\frac {\Dim } {2} } \|\psi \| _{ W^{1,1} }  \|\zeta  \| _{ W^{1,1} } \goto _{ k\to \infty  }0,
\end{split}
\end{equation*}
which is absurd. In the second case,   $ |x _{ n_k }|\to \infty $, so that $\zeta  (\cdot -x _{ n_k } )\weakcv 0$ in $H^1 (\R^\Dim ) $. Since $ \Es{t _{ n_k }} \psi$ belongs to a compact subset of $H^1 (\R^\Dim ) $, it follows (see Lemma~\ref{eRCpu}) that
\begin{equation*}
|( \Es{t _{ n_k }} \psi (\cdot +x _{ n_k }) , \zeta  ) _{ H^1 }| =
|( \Es{t _{ n_k }} \psi  , \zeta  (\cdot - x _{ n_k })) _{ H^1 }| \goto _{ k\to \infty  }0,
\end{equation*}
which is also absurd.
We now prove the second statement, so we assume~\eqref{fAuxt:t}.
Suppose by contradiction that $\psi \not = 0$ and there exist $n_k\to \infty $ such that $ |t _{ n_k }| +  |x _{ n_k }|$ is bounded. By considering a subsequence, we may assume $t _{ n_k } \to  \overline{t} $ and
$x _{ n_k } \to  \overline{x} $. Since $z_n \weakcv 0$, it follows easily that $\Es{t_n} z_n (\cdot +x_n) \weakcv  0$, which is absurd.
\end{proof}

\begin{proof}[Proof of Theorem~$\ref{ProfileE}$]
We set
\begin{equation*}
a = \limsup _{ n\to \infty  } \|\phi _n\| _{ H^1 },
\end{equation*}
we let
\begin{equation*}
W^0_n= \phi _n
\end{equation*}
and we construct by induction on $\ell $ the various sequences so that
for every $1\le j\le \ell$,
\begin{equation} \label{fRect}
W_n^{j -1} = \Esm{ t_n^j } \psi ^j  (\cdot - x_n^j ) + W_n ^j ,
\end{equation}
for all $n\ge 1$,
\begin{equation} \label{fLimtjd}
t_n^j \goto  _{ n\to \infty  } \overline{t}^j,
\end{equation}
\begin{equation} \label{fRecq}
\Es{t_n^j } W_n ^{j-1}  (\cdot +x_n^j ) \weakcv  _{ n\to \infty  } \psi ^j , \quad
\Es{t_n^j } W_n ^j  (\cdot +x_n^j ) \weakcv  _{ n\to \infty  }0,
\end{equation}
in $H^1 (\R^\Dim ) $,
\begin{equation} \label{fRecc}
 \|W_n^{j -1}\| _{ \dot H^\lambda  }^2 -   \|\psi ^j  \| _{ \dot H^\lambda  }^2 -  \|W_n^j \| _{ \dot H^\lambda  }^2
  \goto _{ n\to \infty  }0,
\end{equation}
for all $0\le \lambda \le 1$,
\begin{equation} \label{fReccbu}
 \|W_n^{j -1}\| _{ L^{\Rwr } }^{\Ppu }  -   \|\Esm{t_n^j} \psi^j(\cdot - x_n^j)  \| _{ L^{\Rwr } }^{\Ppu }  -  \|W_n^j \| _{ L^{\Rwr } }^{\Ppu }   \goto _{ n\to \infty  }0,
 \end{equation}
 and
\begin{gather}
\|\Es {\cdot } W _n^{j -1}\| _{ L^\infty ((0,\infty ), L^\Rwr ) } \goto _{ n\to \infty  }A_j,
\label{fIndlim} \\
 \|\psi ^j \| _{ H^1 } \ge \nu a^{- \frac {\Dim -2\Lwr } {2\Lwr (1-\Lwr )}}
 A_j ^{ \frac {\Dim -2\Lwr ^2} {2\Lwr (1-\Lwr )}} ,\label{fRecs}
\end{gather}
where $\nu$ is the constant in Lemma~\ref{eAuxd}.

For $\ell =1$, we set
\begin{equation*}
A_1= \limsup  _{ n\to \infty  } \|\Es {\cdot } \phi _n\| _{ L^\infty ((0,\infty ), L^\Rwr ) }.
\end{equation*}
We extract a subsequence so that
\begin{equation*}
\|\Es {\cdot } \phi _n\| _{ L^\infty ((0,\infty ), L^\Rwr ) } \goto _{ n\to \infty  }A_1,
\end{equation*}
and we apply Lemma~\ref{eAuxd} with $v_n= \phi _n= W_n^0$.
Properties~\eqref{fRect}, \eqref{fRecq}--\eqref{fReccbu} and~\eqref{fRecs} are immediate consequences of Lemma~\ref{eAuxd}.
Moreover, by possibly extracting, we may assume that~\eqref{fLimtjd} holds for some $0\le  \overline{t}^j \le \infty  $.

Given $\ell \ge 2$, suppose $t^j_n,  \overline{t}^j , x_n^j, \psi ^j, W^j_n$ have been constructed for all $n\ge 1$ and $j\le \ell -1$, and set
\begin{equation*}
A_\ell = \limsup  _{ n\to \infty  } \|\Es {\cdot } W _n^{\ell -1}\| _{ L^\infty ((0,\infty ), L^\Rwr ) }.
\end{equation*}
We extract a subsequence so that~\eqref{fIndlim} holds for $j=\ell$
and we apply Lemma~\ref{eAuxd} with $v_n= W _n^{\ell -1}$.
We obtain (after possible extraction)
$(t_n^\ell ) _{ n\ge 1 }\subset (0,\infty )$, $0\le  \overline{t}^\ell \le \infty $, $(x_n^\ell ) _{ n\ge 1 }\subset \R^\Dim $, $\psi ^\ell \in H^1 (\R^\Dim ) $ and  $(W_n^\ell ) _{ n\ge 1 }\subset H^1 (\R^\Dim ) $
such that~\eqref{fRect}--\eqref{fReccbu} hold for $j=\ell $ and
\begin{equation}  \label{fLimspr}
 \|\psi ^\ell \| _{ H^1 } \ge \nu A_\ell ^{ \frac {\Dim -2\Lwr ^2} {2\Lwr (1-\Lwr )}}
(\limsup _{ n\to \infty  } \|W_n ^\ell \| _{ H^1 })^{- \frac {\Dim -2\Lwr } {2\Lwr (1-\Lwr )}} .
\end{equation}
Summing up~\eqref{fRecc} in $j $ from $0$ to $\ell $, we obtain~\eqref{fAsplitu} at rank $\ell $, from which it follows that
\begin{equation*}
\limsup _{ n\to \infty  } \|W_n ^\ell \| _{ H^1 }\le \limsup _{ n\to \infty  } \|\phi _n\| _{ H^1 }\le a;
\end{equation*}
and so~\eqref{fLimspr} yields~\eqref{fRecs}.

We note that at every iteration, we extract subsequences from the previously constructed sequences $t_n^j, x_n^j$ and $W_n^j$.
However, this does not affect the identity~\eqref{fRect}, nor the
 limits~\eqref{fLimtjd}, \eqref{fRecq}, \eqref{fRecc} and~\eqref{fIndlim} (and therefore the estimate~\eqref{fRecs}).
Moreover we may, at the $\ell$-th iteration, extract so that
$t_n^j, x_n^j$ and $W_n^j$ are unchanged for $j \le \ell-1$ and $n\le \ell$.
In this way, the above iteration constructs the sequences $\psi ^j$, $t_n^j$, $ x_n^j$ and $W_n^j$
for all $j\ge 1$ and $n\ge 1$ and all properties~\eqref{fRect}--\eqref{fRecs} are satisfied.

We now show that the sequences that we constructed satisfy all the conclusions.
As observed above, \eqref{fAsplitu} follows by summing~\eqref{fRecc} in $j$ from $0$ to $\ell $.
Similarly, \eqref{expan} follows by summing~\eqref{fRect}.
Note that by~\eqref{fAsplitu}
$ \sum_{j = 1}^\ell \| \psi^j \|_{H^1 }^2 \le a$ so that, letting $\ell \to \infty $,
$ \sum_{j = 1}^\infty  \| \psi^j \|_{H^1 }^2 \le a$.
Applying~\eqref{fRecs}, we deduce that
\begin{equation*}
 \sum_{j = 1}^\infty
A_j ^{ \frac {\Dim -2\Lwr ^2} {\Lwr (1-\Lwr )}}  \le \frac {1} {\nu ^2}
  a^{ \frac {\Dim -\Lwr -\Lwr ^2} {\Lwr (1-\Lwr )}} <\infty ,
\end{equation*}
so that
\begin{equation} \label{fFinu}
A_\ell \goto  _{ \ell \to \infty  }0.
\end{equation}
Note that
$ \| f \| _{ L^\Awr } \le  \| f \| _{ L^\Qwr }^\theta  \|f\| _{ L^\infty  }^{1-\theta }$
with $\theta = \frac {2[4- (\Dim -2) \Pmu ]} {\Dim \Pmu^2}\in (0,1)$; and so
\begin{equation}  \label{fFind}
\begin{split}
 \| \Es {\cdot } W _n^{\ell } \| _{ L^\Awr ((0,\infty ), L^{\Rwr } )}
& \le  \| \Es {\cdot } W _n^{\ell } \| _{ L^\Qwr ((0,\infty ), L^{\Rwr } )}^\theta
 \| \Es {\cdot } W _n^{\ell } \| _{ L^\infty  ((0,\infty ), L^{\Rwr }) }^{1-\theta } \\
 & \le C  \|  W _n^{\ell } \| _{ H^1}^\theta
 \| \Es {\cdot } W _n^{\ell } \| _{ L^\infty  ((0,\infty ), L^{\Rwr }) }^{1-\theta },
\end{split}
\end{equation}
by Strichartz estimate~\eqref{fStru}. Since $\limsup  _{ n\to \infty  }  \| W_n^\ell \| _{ H^1 } \le a$ by~\eqref{fAsplitu},  we deduce from estimates~\eqref{fFind} and~\eqref{fFinu}
that~\eqref{fAsmall} holds.
The last statement of Lemma~\ref{eAuxd} shows that if $\psi ^j=0$ for some $j\ge 1$, then $\psi ^\ell =0$ for all $\ell \ge j$; and so there exists $J\in \N \cup \{\infty \}$ such that $\psi ^j\not = 0$ for all $j<J$ and $\psi ^j=0$ for all $j\ge J$.

We next prove~\eqref{div}.
We suppose $J\ge 3$ (otherwise there is nothing to be proved) and we argue by induction.
We note that by~\eqref{fRecq},
\begin{equation*}
\Es{t_n^1} W_n ^{1} (\cdot +x_n^1) \weakcv  _{ n\to \infty  } 0,
\quad \Es{t_n^2} W_n ^{1} (\cdot +x_n^2) \weakcv  _{ n\to \infty  } \psi ^2.
\end{equation*}
Applying Lemma~\ref{eAuxt} (second statement) with $z_n= \Es{t_n^1} W_n ^{1} (\cdot +x_n^1)$,
we deduce that $ |t_n^2-t_n^1| +  |x_n^2-x_n^1| \to \infty $.
Suppose now $J\ge 4$ and~\eqref{div} has been proved for all $1\le j\not = k\le \ell -1$ for some $\ell <J$. Given $1\le j\le \ell -1$, it follows from~\eqref{fRect} that
\begin{equation*}
W_n^{\ell -1}- W_n^{j-1}= - \sum_{ k= j }^{\ell -1}
\Esm{ t_n^k} \psi ^k (\cdot - x_n^k),
\end{equation*}
so that
\begin{multline} \label{fDivu}
\Es{t_n^j}W_n^{\ell -1}(\cdot +x_n^j)-  \Es{t_n^j}W_n^{j-1} (\cdot +x_n^j) \\
=- \psi ^j - \sum_{ k= j+1 }^{\ell -1}
\Es{(t_n^j - t_n^k)} \psi ^k (\cdot + x_n^j -x_n^k).
\end{multline}
Applying~\eqref{div}, it follows from Lemma~\ref{eAuxt} (first statement) that the right-hand side of~\eqref{fDivu} weakly converges to $- \psi ^j$. Using also~\eqref{fRecq}, we deduce from~\eqref{fDivu} that
\begin{equation*}
\Es{t_n^j}W_n^{\ell -1}(\cdot +x_n^j)
\weakcv  _{ n\to \infty  } 0.
\end{equation*}
On the other hand, it follows from~\eqref{fRecq} (with $j=\ell $) that
\begin{equation*}
\Es{t_n^\ell} W_n ^{\ell-1} (\cdot +x_n^\ell ) \weakcv  _{ n\to \infty  } \psi ^\ell.
\end{equation*}
Applying Lemma~\ref{eAuxt} (second statement), we deduce as above that  $ |t_n^\ell-t_n^j| +  |x_n^\ell-x_n^j| \to \infty $.
Thus we see that~\eqref{div} holds for all $1\le j\not = k\le \ell$.

Finally, we deduce from~\eqref{fRecc} (with $\lambda =1$) and~\eqref{fReccbu} that
\begin{equation*}
 E(W_n^{j -1})  -   E(\Esm{t_n^j} \psi^j(\cdot - x_n^j) ) -  E(W_n^j )    \goto _{ n\to \infty  }0,
 \end{equation*}
and~\eqref{fEsplit} follows by summing up the above estimate from $j=1$ to $j=\ell$.
\end{proof}

\section{Existence of a critical solution} \label{sCritSol}

This section is devoted to the following proposition, which is an essential step in the proof of Theorem~\ref{main}.

\begin{prop}\label{critical}
Let $\Nonad $ be defined by~\eqref{fProofd}, $\Inv  _{ \omega  }$ be defined by~\eqref{fProoft},
and let $\omega _0\in (0,1]$ be defined by~\eqref{fProofq}. If $\omega _0<1$, then there exists
$\Fcr \in \Inv$ such that $\Fcr \not \in \Nonad$.
Moreover, if $\Ucr $ is the corresponding solution of~\eqref{SCH} then
 there exists a function $x \in C(\R, \R^\Dim )$ such that
$\{ \Ucr (t, \cdot - x(t));\, t\ge 0\}$
is relatively compact in $H^1(\R^\Dim )$.
\end{prop}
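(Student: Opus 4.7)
The plan is the Kenig--Merle concentration-compactness strategy, as adapted to this setting in~\cite{DHR}, and proceeds in three steps: a minimizing sequence, a profile decomposition with nonlinear profile extraction and perturbation, and a compactness argument for time-translates.

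\emph{Step 1: minimizing sequence and profile decomposition.} By the definition of $\omega _0$, for each $n$ there is $\DI _n \in \Inv \setminus \Nonad$ with $E(\DI _n) M(\DI _n)^\Siwr \to \omega _0 E(Q)M(Q)^\Siwr$. The quantities $E\cdot M^{\Siwr}$ and $\|\nabla u\|_{L^2}\|u\|_{L^2}^\Siwr$, as well as the sets $\Inv$ and $\Nonad$, are invariant under the scaling~\eqref{scaling}; so after rescaling to $M(\DI _n)=M(Q)$ the sequence stays in $\Inv \setminus \Nonad$ and is bounded in $H^1$ by Lemma~\ref{Cdelta}. Applying Theorem~\ref{ProfileE} yields profiles $\psi ^j$, parameters $(t_n^j,\overline{t}^j,x_n^j)$, and remainders $W_n^\ell$. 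To each nontrivial $\psi ^j$ I attach a \emph{nonlinear profile} $U^j\in C(\R, H^1(\R^\Dim))$: if $\overline{t}^j\in [0,\infty)$, $U^j$ is the solution of~\eqref{SCH} with initial datum $\Esm{\overline{t}^j}\psi ^j$; if $\overline{t}^j=\infty $, Proposition~\ref{WaveO} (whose hypothesis~\eqref{WaveO:u} is satisfied thanks to the decay $\|\Esm{t}\psi\|_{L^\Rwr}\to 0$ together with~\eqref{fEsplit}) produces such a $U^j$. In either case $\|U^j(-t_n^j) - \Esm{t_n^j}\psi ^j\|_{H^1}\to 0$, and the orthogonality identities~\eqref{fAsplitu}--\eqref{fEsplit} combined with Lemma~\ref{Cdelta} give
\[
\sum _j E(U^j) M(U^j)^\Siwr + \lim _n E(W_n^\ell) M(W_n^\ell)^\Siwr \le \omega _0 E(Q)M(Q)^\Siwr + o_\ell(1),
\]
together with an analogous inequality for the scale-invariant kinetic quantity.

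\emph{Step 2: extraction of the critical element via perturbation.} Set $\omega _c(v):=E(v)M(v)^\Siwr /(E(Q)M(Q)^\Siwr)$, and suppose for contradiction that $\omega _c(U^j) < \omega _0$ for every nontrivial profile. Then by the very definition of $\omega _0$, each $U^j\in \Nonad $ with a uniform bound in $L^\Awr((0,\infty), L^\Rwr)$; and for $j$ large, estimate~\eqref{fAuxd:c} makes $\|\psi^j\|_{\dot H^\Swr}$ small so that Proposition~\ref{small} controls the tail. I form the approximation
\[
\widetilde u_n(t) = \sum _{ j=1 }^\ell U^j(t-t_n^j,\cdot -x_n^j) + \Es{t} W_n^\ell.
\]
The divergence property~\eqref{div}, through Corollary~\ref{eCritu} and a spacetime version of Lemma~\ref{eRCpu}, kills all cross terms asymptotically; \eqref{fAsmall} controls $\|\Es{\cdot}(\DI_n-\widetilde u_n(0))\|_{L^\Awr L^\Rwr}$; and the nonlinear error $e_n=i\partial_t\widetilde u_n+\Delta\widetilde u_n+|\widetilde u_n|^\Pmu \widetilde u_n$, estimated via the pointwise bound~\eqref{fLocl} and~\eqref{KATO1}, tends to $0$ in $L^{\Bwr'}((0,\infty),L^{\Rwr'})$ as $\ell\to\infty$ and then $n\to\infty$. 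Proposition~\ref{LTPT} then yields $u_n\in L^\Awr((0,\infty),L^\Rwr)$ uniformly, contradicting $\DI_n\notin\Nonad$. Hence some profile $U^1$ satisfies $\omega_c(U^1)=\omega_0$; subadditivity then forces every other $\omega^j$ to vanish, so $\psi^j=0$ for $j\ge 2$ and $W_n^1\to 0$ strongly in $H^1$. Setting $\Fcr:=U^1(0)$ (after an obvious time shift if needed) and $\Ucr:=\Sol(\cdot)\Fcr$, one obtains $\Fcr\in\Inv_{\omega_0}\setminus\Nonad$. This step is the main technical obstacle: the approximation lemma must deliver smallness of $e_n$ in the non-admissible norm $L^{\Bwr'}L^{\Rwr'}$, which requires combining the spacetime orthogonality from~\eqref{div}, the summability $\sum_j\|\psi^j\|_{H^1}^2<\infty$ for the tail of profiles, and a careful use of~\eqref{KATO1} inside the Gronwall-type perturbation Proposition~\ref{LTPT}.

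\emph{Step 3: precompactness of the orbit.} For any sequence $t_n\ge 0$, the data $\Ucr(t_n)$ is again critical: it lies in $\Inv_{\omega_0}$ by conservation of mass and energy, and it does not scatter forward, since otherwise $\Ucr$ itself would scatter by Proposition~\ref{eBasic}. Rerunning the argument of Steps~1--2 on $(\Ucr(t_n))$, exactly one profile survives with critical size, the remainder vanishes in $H^1$, and the associated time parameter must be finite (otherwise Proposition~\ref{WaveO} would yield a scattering solution at the critical size, contradicting the non-scattering of $\Ucr$). Consequently $\Ucr(t_n,\cdot-x_n)\to\widetilde\psi$ strongly in $H^1$ along a subsequence, for suitable $x_n\in\R^\Dim$, which gives the relative compactness of $\{\Ucr(t,\cdot-x(t))\}_{t\ge 0}$ in $H^1$; continuity of $t\mapsto x(t)$ then follows from the continuity of the flow by a standard selection argument.
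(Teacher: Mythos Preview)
Your overall strategy coincides with the paper's: profile decomposition of a minimizing sequence, nonlinear profiles plus the perturbation lemma to force a single profile, then re-application to $(\Ucr(t_n))$ for precompactness. Two points need repair.

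First, a minor one in Step~2. After isolating the single profile $U^1$ with $\omega_c(U^1)=\omega_0$, you must still argue $\Fcr\notin\Nonad$. The perturbation argument only gives $\|\Sol(\cdot)\widetilde\psi\|_{L^\Awr(\R,L^\Rwr)}=\infty$ (if it were finite, Proposition~\ref{LTPT} would force $\phi_n\in\Nonad$). Thus either the forward or the backward norm is infinite; in the latter case one sets $\Fcr=\overline{\widetilde\psi}$ via~\eqref{fFlowd}. This is a conjugation, not a ``time shift''.

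Second, and this is a genuine gap, your justification in Step~3 for $\overline\tau<\infty$ does not work as written. Proposition~\ref{WaveO} produces a nonlinear profile $\widetilde\psi$ at the critical level that scatters at $-\infty$; this does not by itself contradict $\Ucr\notin\Nonad$, since the latter concerns the \emph{forward} $L^\Awr L^\Rwr$ norm of $\Ucr$, and you have no forward information on $\widetilde\psi$. The paper closes this differently: if $\tau_k\to\infty$, then using~\eqref{fFlowd} and $\overline{\Esm{\tau_k}\psi}=\Es{\tau_k}\overline\psi$ one has
\[
\|\Es{\cdot}\,\overline{\Ucr(t_k)}\|_{L^\Awr((0,\infty),L^\Rwr)}
\le \|\Es{\cdot}\,\overline\psi\|_{L^\Awr((\tau_k,\infty),L^\Rwr)}+C\|W_k\|_{H^1}\goto_{k\to\infty}0,
\]
so Proposition~\ref{LTPT} with $\widetilde u=e=0$ yields $\|\Sol(\cdot)\overline{\Ucr(t_k)}\|_{L^\Awr((0,\infty),L^\Rwr)}\le C(0)$, i.e.\ $\|\Ucr\|_{L^\Awr((-\infty,t_k),L^\Rwr)}\le C(0)$. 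Letting $t_k\to\infty$ gives $\Ucr\in L^\Awr(\R,L^\Rwr)$, contradicting $\Fcr\notin\Nonad$. You need this conjugation-and-small-data step (or an explicit backward perturbation against $\Sol(\cdot-\tau_k)\widetilde\psi$ followed by the same limit) to rule out $\overline\tau=\infty$.
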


In the proof Proposition~\ref{critical}, we use the following lemma, in which we construct  nonlinear profiles associated to certain elements of $\Inv  _{ \omega  }$.

\begin{lem} \label{eNLP}
Let $0<\omega <1$  and $(t_n) _{ n\ge 1 }\subset (0,\infty )$. If $\Esm{t_n} \psi \in \Inv  _{ \omega  }$ for all $n\ge 1$ and $t_n\to  \overline{t} \in [0,\infty ] $, then there exists a ``nonlinear profile" $ \widetilde{\psi }  \in \Inv  _{ \omega  }$ such that
\begin{gather}
 \| \widetilde{\psi } \| _{ L^2 } =  \|\psi \| _{ L^2 }, \label{eNLP:u} \\
E(  \widetilde{\psi })= \lim  _{ n\to \infty  } E( \Esm{t_n} \psi ),   \label{eNLP:d} \\
 \| \Sol (-t)  \widetilde{\psi } - \Esm{t} \psi \| _{ H^1 } \goto  _{ t\to  \overline{t}   }0 \label{eNLP:t}.
\end{gather}
\end{lem}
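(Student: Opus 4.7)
The plan is to split the argument according to whether $\overline{t}$ is finite or equal to $+\infty$; only the second case requires real work.

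Suppose first that $\overline{t}<\infty$. By $H^1$-continuity of the linear Schr\"odinger group, $\Esm{t_n}\psi \to \Esm{\overline{t}}\psi$ in $H^1 (\R^\Dim )$, and since $\Inv_\omega$ is $H^1$-closed (Remark~\ref{eRemd}), it follows that $\Esm{\overline{t}}\psi \in \Inv_\omega$. I would then set $\widetilde{\psi} = \Sol(\overline{t})\Esm{\overline{t}}\psi$. By the flow-invariance~\eqref{fInvKo}, $\widetilde{\psi}\in\Inv_\omega$, and the group property yields $\Sol(-\overline{t})\widetilde{\psi}=\Esm{\overline{t}}\psi$. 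Property~\eqref{eNLP:t} then follows from the joint $H^1$-continuity in $t$ of $\Sol(-t)\widetilde{\psi}$ and of $\Esm{t}\psi$, while~\eqref{eNLP:u} and~\eqref{eNLP:d} follow from conservation of mass and energy along $\Sol$, combined with $H^1$-continuity of $M$ and $E$ applied on the linear side.

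Suppose now $\overline{t}=\infty$. The key observation is that the free profile $\psi$ itself satisfies the hypothesis of Proposition~\ref{WaveO}. Indeed, the linear flow preserves the $L^2$ and $\dot H^1$ norms, so $M(\Esm{t_n}\psi)=\|\psi\|_{L^2}^2$ and $\|\nabla \Esm{t_n}\psi\|_{L^2}=\|\nabla \psi\|_{L^2}$; on the other hand, the standard dispersive decay gives $\|\Esm{t_n}\psi\|_{L^\Rwr}\to 0$, and an interpolation between $L^2$ and $L^\Rwr$ yields $\|\Esm{t_n}\psi\|_{L^\Ppu}\to 0$, so
\begin{equation*}
E(\Esm{t_n}\psi)\goto _{ n\to\infty } \frac{1}{2}\|\nabla\psi\|_{L^2}^2.
\end{equation*}
Passing to the limit in the inequality $E(\Esm{t_n}\psi)M(\Esm{t_n}\psi)^{\Siwr}\le \omega E(Q)M(Q)^{\Siwr}$ gives exactly condition~\eqref{WaveO:u} for $\psi$. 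Proposition~\ref{WaveO} then supplies $\widetilde{\psi}\in\Inv_\omega$ satisfying~\eqref{eNLP:t}. Conservation of the $L^2$ norm along $\Sol$ together with the $H^1$-convergence in~\eqref{eNLP:t} (passed to $t\to\infty$) yields~\eqref{eNLP:u}, and conservation of energy along $\Sol$ together with the limit displayed above yields $E(\widetilde{\psi})=\tfrac12\|\nabla\psi\|_{L^2}^2=\lim_n E(\Esm{t_n}\psi)$, which is~\eqref{eNLP:d}.

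The main (and essentially only) subtle point is the $\overline{t}=\infty$ case, in which one must exploit the dispersive decay of $\|\Esm{t}\psi\|_{L^\Ppu}$ to pass to the limit in the potential-well condition and verify~\eqref{WaveO:u} for the free profile $\psi$. Once this reduction to Proposition~\ref{WaveO} is secured, the wave operator construction carried out there produces the desired nonlinear profile and all three conclusions of the lemma follow without further difficulty.
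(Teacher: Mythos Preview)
Your proof is correct and follows essentially the same approach as the paper: split into the cases $\overline{t}<\infty$ (where $\widetilde\psi=\Sol(\overline t)\Esm{\overline t}\psi$ works by continuity and flow-invariance) and $\overline{t}=\infty$ (where dispersive decay of $\|\Esm{t_n}\psi\|_{L^{\Ppu}}$ reduces the potential-well condition to~\eqref{WaveO:u}, and Proposition~\ref{WaveO} supplies the nonlinear profile). One harmless redundancy: since $\Rwr=\Ppu$, your interpolation step between $L^2$ and $L^{\Rwr}$ is unnecessary.
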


\begin{proof}
If $ \overline{t} = \infty $, then $ \| \Esm{t_n} \psi \| _{ L^\Rwr } \to 0$ as $n\to \infty $, so that
\begin{equation*}
E(\Esm{t_n} \psi) - \frac {1} {2}  \|\nabla   \psi \| _{ L^2 }^2
= E(\Esm{t_n} \psi ) - \frac {1} {2}  \|\nabla \Esm{t_n } \psi \| _{ L^2 }^2
\goto  _{ n\to \infty  }0.
\end{equation*}
Since $\psi \in \Inv  _{ \omega  }$, we deduce that
\begin{equation} \label{fNLPu}
\frac {1} {2} \|\nabla \psi \| _{ L^2 }^2 M(\psi )^\Siwr
\le  \ \omega  E(Q) M(Q)^\Siwr.
\end{equation}
Thus we may apply Proposition~\ref{WaveO}  and obtain $ \widetilde{\psi }  \in \Inv  _{ \omega  }$ such that~\eqref{eNLP:t} holds.
If $ \overline{t} <\infty $, we set
$ \widetilde{\psi } = \Sol ( \overline{t}) [ \Esm {\overline{t}} \psi ]$.
Since $\Esm {\overline{t}} \psi  \in \Inv  _{ \omega  }$, we deduce from~\eqref{fInvKo} that $ \widetilde{\psi }  \in \Inv  _{ \omega  }$, and~\eqref{eNLP:t} follows from the continuity of the flow.
Finally, \eqref{eNLP:t} together with conservation of charge (for both the linear and nonlinear flows) and energy (for the nonlinear flow) yield~\eqref{eNLP:u} and~\eqref{eNLP:d}.
\end{proof}

The main step in the proof of Proposition~\ref{critical} is the following lemma, which says that, under appropriate assumptions, the profile decomposition of Theorem~\ref{ProfileE} contains at most one nonzero element.

\begin{lem} \label{ePrinu}
Suppose $\omega _0<1$. Let $(\omega _n) _{ n\ge 1 }\subset (0,1)$ satisfy $\omega _n \to \omega _0$ as $n\to \infty $. Let $(\phi _n) _{ n\ge 1 } \subset H^1 (\R^\Dim ) $ and suppose
$\phi _n \in \Inv  _{ \omega _n }$ and $M(\phi _n)=1$, $\phi _n\not \in \Nonad$  for all $n\ge 1$.
It follows that there exist a subsequence, which we still denote by $(\phi _n ) _{ n\ge 1 }$,
$\psi \in H^1 (\R^\Dim ) $, $(W_n) _{ n\ge 1 } \subset H^1 (\R^\Dim ) $,
$(x_n) _{ n\ge 1 }\subset \R^\Dim $, $(\tau _n) _{ n\ge 1 }\subset [0,\infty )$ and $0\le  \overline{\tau }\le \infty  $ such that $M(\psi )=1$ and
\begin{gather}
\phi _n= \Esm{\tau _n} \psi (\cdot -x_n) + W_n,\quad n\ge 1\label{ePrinu:u} \\
\tau _n \goto _{ n\to \infty  } \overline{\tau }, \label{ePrinu:d}  \\
 \|W_n\| _{ H^1 } \goto _{ n\to \infty  }0. \label{ePrinu:t}
\end{gather}
\end{lem}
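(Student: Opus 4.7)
The plan is to apply the profile decomposition of Theorem~\ref{ProfileE} to $(\phi_n)$ and analyze its structure case by case; I will show that after extraction there is a single nonlinear profile and the remainder vanishes in $H^1$. Let $J\in\N\cup\{\infty\}$ be the number of non-zero profiles, so
\begin{equation*}
\phi_n=\sum_{j=1}^\ell \Esm{t_n^j}\psi^j(\cdot-x_n^j)+W_n^\ell,
\end{equation*}
with $\psi^j\neq 0$ for $1\le j<J$ and $\psi^j=0$ for $j\ge J$. The four cases $J=1$; $J=2$ with $\widetilde\psi^1\in\Nonad$; $J\ge 3$; and $J=2$ with $\widetilde\psi^1\notin\Nonad$ are treated separately. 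In the first case, the final statement of Lemma~\ref{eAuxd} gives $\|\Es{\cdot}\phi_n\|_{L^\infty((0,\infty),L^\Rwr)}\to 0$, and interpolating with a $L^\Qwr L^\Rwr$-Strichartz estimate as in \eqref{fFind} yields $\|\Es{\cdot}\phi_n\|_{L^\Awr L^\Rwr}\to 0$; then, as in the proof of Proposition~\ref{small}, $\phi_n\in\Nonad$ for large $n$, contradicting the assumption.

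If $J\ge 2$, I lift each non-zero $\psi^j$ to a nonlinear profile $\widetilde\psi^j\in\Inv_{\omega_0}$ using Lemma~\ref{eNLP} (with $m_j:=M(\widetilde\psi^j)=M(\psi^j)$) and set $U^j(t)=\Sol(t)\widetilde\psi^j$. The $L^2$- and $\dot H^1$-splittings of \eqref{fAsplitu}, combined with the bound $\|\nabla\phi_n\|_{L^2}\|\phi_n\|_{L^2}^\Siwr\le \omega_n^{1/2}\|\nabla Q\|_{L^2}\|Q\|_{L^2}^\Siwr$ from Lemma~\ref{Cdelta}\eqref{fCdeltad}, show that each profile $\Esm{t_n^j}\psi^j(\cdot-x_n^j)$ and each remainder $W_n^\ell$ eventually satisfies the gradient-mass inequality, hence has nonnegative energy by Lemma~\ref{Cdelta}\eqref{fCdeltau}. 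Using \eqref{eNLP:d} and the energy identity \eqref{fEsplit}, this yields $E(\widetilde\psi^j)\le\omega_0 E(Q)M(Q)^\Siwr$ for every~$j$. When $J\ge 3$, $\sum_j m_j\le 1$ forces $m_j<1$ strictly for each non-zero profile, so $E(\widetilde\psi^j)m_j^\Siwr<\omega_0 E(Q)M(Q)^\Siwr$, placing $\widetilde\psi^j$ in $\Inv_{\omega'}$ for some $\omega'<\omega_0$ and hence in $\Nonad$ by definition of $\omega_0$. The same conclusion holds trivially in the case $J=2$ with $\widetilde\psi^1\in\Nonad$.

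In these subcases the approximate solution
\begin{equation*}
\widetilde u_n^\ell(t,x)=\sum_{j=1}^\ell U^j(t-t_n^j,x-x_n^j)+\Es{t}W_n^\ell
\end{equation*}
satisfies $\|\widetilde u_n^\ell(0)-\phi_n\|_{H^1}\to 0$ as $n\to\infty$ by \eqref{eNLP:t}. The parameter divergence \eqref{div}, together with Lemma~\ref{eRCpu}, forces the cross terms in the nonlinearity to vanish asymptotically in $L^{\Bwr'}L^{\Rwr'}$, while \eqref{fAsmall} makes the remainder's contribution to $\|\widetilde u_n^\ell\|_{L^\Awr L^\Rwr}$ arbitrarily small for $\ell$ large. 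Choosing $\ell$ large and then $n$ large, Proposition~\ref{LTPT} forces $\phi_n\in\Nonad$, a contradiction. In the remaining case $J=2$ with $\widetilde\psi^1\notin\Nonad$, the definition of $\omega_0$ rules out $\widetilde\psi^1\in\Inv_{\omega'}$ for any $\omega'<\omega_0$, so $E(\widetilde\psi^1)m_1^\Siwr\ge\omega_0 E(Q)M(Q)^\Siwr$; combined with $E(\widetilde\psi^1)\le\omega_0 E(Q)M(Q)^\Siwr$ and $m_1\le 1$ from the previous paragraph, equality must hold, forcing $m_1=1$, $E(\widetilde\psi^1)=\omega_0 E(Q)M(Q)^\Siwr$, and $\lim_n E(W_n^1)=0$ via \eqref{fEsplit}. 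The $L^2$-splitting then gives $\|W_n^1\|_{L^2}\to 0$, Gagliardo-Nirenberg \eqref{GN} gives $\|W_n^1\|_{L^\Ppu}^\Ppu\to 0$, and the definition of $E$ yields $\|\nabla W_n^1\|_{L^2}\to 0$. Setting $\psi=\psi^1$, $\tau_n=t_n^1$, $x_n=x_n^1$, $W_n=W_n^1$, $\overline\tau=\overline{t}^1$ completes the proof.

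The main technical obstacle is the perturbation step: verifying the PDE error for $\widetilde u_n^\ell$ is small in $L^{\Bwr'}((0,\infty),L^{\Rwr'})$ requires an asymptotic analysis of interferences between time- and space-shifted nonlinear profiles via the pointwise inequality \eqref{fGerd} and the orthogonality \eqref{div}. The double limit (first $\ell\to\infty$ to make the remainder small, then $n\to\infty$ to kill cross terms) must be arranged so that Proposition~\ref{LTPT}'s threshold $\varepsilon(A)$ is met for the uniform $L^\Awr L^\Rwr$ bound $A$ coming from the nonlinear profiles.
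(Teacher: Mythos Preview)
Your proposal is correct and follows essentially the same approach as the paper: apply the profile decomposition, lift to nonlinear profiles via Lemma~\ref{eNLP}, use the mass/energy splittings and Lemma~\ref{Cdelta} to show each profile lies in $\Inv_{\omega_0}$, and then run the perturbation argument (Proposition~\ref{LTPT}) to rule out the case of more than one profile, finishing with the $H^1$-vanishing of the remainder. The paper packages your case analysis into a single quantity $\widetilde\omega=\sup_j E(\widetilde\psi^j)M(\widetilde\psi^j)^\Siwr/E(Q)M(Q)^\Siwr$ and argues $\widetilde\omega=\omega_0$ by contradiction (which simultaneously handles your cases $J=1$, $J\ge 3$, and $J=2$ with $\widetilde\psi^1\in\Nonad$), and it places the linear remainder $W_n^\ell$ in the initial-data error rather than in the approximate solution itself; but these are organizational variants of the same argument.
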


\begin{proof}
Since $\phi _n \in \Inv  _{ \omega _n }$ and $M(\phi _n)=1$, we deduce from~\eqref{fCdeltad} that
\begin{equation}  \label{fPrinu}
\|\nabla \phi _n\| _{ L^2 }   \le\omega _n^{ \frac {1} {2} }  \|\nabla Q\| _{ L^2 }  \|Q\| _{ L^2 }^\Siwr.
\end{equation}
Applying Theorem~\ref{ProfileE}  to $ (\phi _n) _{ n\ge 1 }$, we write
(after extracting a subsequence)
\begin{equation} \label{fCritq}
\phi_n = \sum_{j=1}^{\ell} \Esm{t_n^j} \psi^j(\cdot - x_n^j) + W_n^ \ell,
\end{equation}
 where the various sequences satisfy properties~\eqref{fLimtj} through~\eqref{fAsmall}.
 Since $M(\phi _n)=1$,
it follows in particular from~\eqref{fAsplitu}  that for every $\ell \ge 1$,
$\sum_{j = 1}^\ell M( \psi^j ) \le 1$. Thus
\begin{equation} \label{fCritp}
 \sum_{j = 1}^\infty  M( \psi^j ) \le 1,
\end{equation}
and we set
\begin{equation} \label{fCritpbu}
 \widetilde{M}= \sup _{ j\ge 1 } M( \psi^j ) \le 1.
\end{equation}
Similarly, we deduce from~\eqref{fAsplitu} and~\eqref{fPrinu}  that
\begin{equation*}
 \sum_{j = 1}^\ell  \|\nabla \psi^j \| _{ L^2 }^2 \le \limsup _{ n\to \infty  } \|\nabla \phi _n\| _{ L^2 }^2
 \le \omega _0  \|\nabla Q\| _{ L^2 }^2  \|Q\| _{ L^2 }^{2 \Siwr} ,
\end{equation*}
so that
\begin{equation}  \label{fCritn}
 \sum_{j = 1}^\infty   \|\nabla \psi^j \| _{ L^2 }^2 \le  \omega _0  \|\nabla Q\| _{ L^2 }^2  \|Q\| _{ L^2 }^{2 \Siwr} .
\end{equation}
Applying~\eqref{fCritp} and~\eqref{fCritn}, we see that  for every $j,n\ge 1$
\begin{equation} \label{fCrituz}
  \|\nabla \Esm{t_n^j} \psi^j  \| _{ L^2 }  \| \Esm{t_n^j} \psi^j \| _{ L^2 }^{ \Siwr}  =
  \|\nabla \psi ^j\| _{ L^2 }  \|\psi ^j\| _{ L^2 }^{ \Siwr}  \le
   \omega _0  ^{\frac {1} {2}}  \|\nabla Q\| _{ L^2 }  \|Q\| _{ L^2 }^{ \Siwr}
\end{equation}
It follows from~\eqref{fCrituz} and~\eqref{fCdeltau} that
\begin{equation}  \label{fCrituubu}
E(\Esm{t_n^j} \psi^j) \ge  \frac {\Dim \Pmu -4} {2 \Dim \Pmu} \|\nabla \psi ^j\| _{ L^2 }^2 \ge 0,
\end{equation}
for all $n,j\ge 1$.
Similarly as above, we deduce from~\eqref{fAsplitu} (with $\lambda =0$ and $\lambda =1$) that, given any $\ell\ge 1$,
\begin{equation*}
\limsup _{ n\to \infty  }  \|W_n ^\ell \| _{ L^2 }\le 1,
\quad \limsup _{ n\to \infty  }  \|\nabla W_n ^\ell \| _{ L^2 }^2 \le \omega _0  \|\nabla Q\| _{ L^2 }^2  \|Q\| _{ L^2 }^{2 \Siwr},
\end{equation*}
so that
\begin{equation}  \label{fCritud}
E(W_n^\ell ) \ge  \frac {\Dim \Pmu -4} {2 \Dim \Pmu} \|\nabla W_n^\ell \| _{ L^2 }^2 \ge 0,
\end{equation}
for all sufficiently large $n$ (depending on $\ell$).
Since $\phi _n\in \Inv  _{ \omega _n }$ and $M(\phi _n)=1$, it follows from~\eqref{fEsplit} and~\eqref{fCritud} that, given any $j\ge 1$,
\begin{equation} \label{fYAM}
\limsup _{ n\to \infty  } E (\Esm{t_n^j} \psi^j) \le \limsup _{ n\to \infty  } E(\phi _n) \le \omega _0
E(Q) M(Q)^\Siwr,
\end{equation}
and we set
\begin{equation} \label{fSupld}
 \widetilde{E}= \sup _{ j\ge 1 } \Bigl[  \limsup _{ n\to \infty  } E (\Esm{t_n^j} \psi^j) \Bigr]
 \le \omega _0 E(Q) M(Q)^\Siwr.
\end{equation}
Given any $\omega _0<\omega <1$, it follows from~\eqref{fCritpbu} and~\eqref{fSupld} that
for every $j\ge 1$, $\Esm{t_n^j} \psi^j\in  \Inv _{ \omega  }$ for all large $n$.
We now apply Lemma~\ref{eNLP} and obtain the nonlinear profiles
\begin{equation} \label{fNLProfd}
\widetilde{\psi }^j  \in \Inv  _{ \omega  },
\end{equation}
such that
\begin{equation} \label{fNLProfu}
 \| \Sol (-t)  \widetilde{\psi }^j - \Esm{t} \psi^j \| _{ H^1 } \goto  _{ t\to  \overline{t}^j   }0.
\end{equation}
We set
\begin{equation} \label{fDefoz}
 \widetilde{\omega }= \frac { \widetilde{E}  \widetilde{M}^\Siwr  } {E(Q) M(Q)^\Siwr}\le \omega _0.
\end{equation}
 Note that by~\eqref{fNLProfd}, \eqref{fCritpbu}, \eqref{fSupld} and~\eqref{eNLP:d},
\begin{equation} \label{fFinubu}
 \widetilde{\psi }^j \in \Inv  _{  \widetilde{\omega }  },\quad j\ge 1.
\end{equation}
We now prove by contradiction that
\begin{equation} \label{fAbsu}
 \widetilde{\omega } = \omega _0.
\end{equation}
The idea of the proof is now to approximate
\begin{equation} \label{fAppru}
\Sol (t) \phi _n \approx  \sum_{ j=1 }^\ell  \Sol (t-t_n^j)  \widetilde{\psi }^j (\cdot -x_n^j)
\end{equation}
If~\eqref{fAbsu} fails, then $\widetilde{\omega } < \omega _0$, so  all the terms on the right hand side of the approximation~\eqref{fAppru} scatter.
Furthermore, for $\ell $ and $n$ sufficiently large, it follows from the divergence property~\eqref{div} that the remainder in~\eqref{fAppru} converges to $0$ as $t\to \infty $. It follows that $\Sol (t) \phi _n$ scatters, which yields a contradiction.
We now go into the details, so we assume
$\widetilde{\omega }<\omega _0$.
Since $\Inv _{ \widetilde{\omega}   }$ is invariant by complex conjugation,
we deduce from~\eqref{fFinubu}  that
 $ \widetilde{\psi }^j,  \overline{ \widetilde{\psi }^j} \in \Inv _{ \widetilde{\omega}   }$, so that by~\eqref{fFlowd} and definition of $\omega _0$,
\begin{equation} \label{fEstlwru}
 \| \Sol (\cdot ) \widetilde{\psi }^j\|  _{ L^\Awr (\R, L^\Rwr) } <\infty ,
\end{equation}
for all $j\ge 1$.
Let
\begin{gather}
u_n(t)= \Sol (t) \phi _n, \label{fCritSu}  \\
v_n^j (t)=   \Sol (t-t_n^j)  \widetilde{\psi }^j (\cdot -x_n^j) , \label{fCritSubu} \\
u_n^\ell (t)=  \sum_{ j=1 }^\ell  v_n^j (t), \label{fCritSd} \\
\widetilde{W}_n^\ell = \sum_{j=1}^\ell  [\Esm{t_n^j}\psi^j(\cdot - x_n^j) -
\Sol(-t_n^j)\widetilde{\psi}^j(\cdot - x_n^j)] + W_n^\ell.  \label{fCritSt}
\end{gather}
It follows from~\eqref{fCritq} and~\eqref{fCritSu}--\eqref{fCritSt}  that
\begin{equation} \label{tilde}
i\partial_t u_n^\ell  + \Delta  u_n^\ell + |u_n^\ell |^{\Pmu } u_n^\ell  = e_n^\ell ,
\end{equation}
where
\begin{equation} \label{fCritSc}
e_n^\ell  =|u_n^\ell |^{\Pmu } u_n^\ell  -
\sum_{j=1}^\ell |v_n^j|^{\Pmu } v_n^j,
\end{equation}
and that
\begin{equation} \label{fCritSq}
u_n (0)-  u_n^\ell (0)= \widetilde{W}_n^\ell.
\end{equation}
We want to apply Proposition~\ref{LTPT}, and we begin by estimating $ \| u_n^\ell \| _{ L^\Awr ((0,\infty ), L^\Rwr) }$. It is not convenient to estimate this norm directly, so we estimate
$ \| u_n^\ell \| _{ L^\Gwr ((0,\infty ), L^\Gwr) }$ and $ \| u_n^\ell \| _{ L^\infty  ((0,\infty ), H^1) }$, the desired estimate resulting by using~\eqref{fHolu}.
We note that by~\eqref{fCritp}  and~\eqref{fCritn},
\begin{equation} \label{fCritSbu}
 \sum_{ j\ge 1 } \|\psi ^j\| _{ H^1 }^2 =: C_1< \infty .
\end{equation}
In particular, there exists $\ell _0\ge 1$ such that
\begin{equation} \label{fCritSbd}
 \|\psi ^j\| _{ H^1 }\le \frac {\Dsd } {2},\quad j\ge \ell _0,
\end{equation}
where $\Dsd $ is given by Proposition ~\ref{small}.
Applying~\eqref{fNLProfu}, we deduce from~\eqref{fCritSbd} that
for all $\ell \ge \ell _0$ there exists $n_1(\ell )\ge 1$ such that
\begin{equation} \label{fCritSbt}
 \|\Sol(-t_n^j)\widetilde{\psi}^j \| _{ H^1 } \le  \Dsd \le 1 ,\quad \ell _0 \le j\le \ell
 , n\ge n_1(\ell ).
\end{equation}
Furthermore, it follows from~\eqref{fCritSbu} and~\eqref{fNLProfu} that given any
$\ell \ge 1$ there exists $n_2(\ell) \ge 1$ such that
\begin{equation} \label{fCritSbq}
\sum_{ j= 1 }^\ell  \|\Sol(-t_n^j)\widetilde{\psi}^j \| _{ H^1 }^2 \le 2C_1,\quad n\ge n_2(\ell).
\end{equation}
It follows from~\eqref{fGerd} that, given any $\ell \ge \ell _0$,
\begin{equation} \label{fCritSs}
 \| \sum_{ \ell_0 }^\ell  v_n^j \| _{ L^\Gwr(\R^{\Dim +1}_+) }^\Gwr  \le
\sum_{ \ell_0 }^\ell   \|  v_n^j \| _{ L^\Gwr(\R^{\Dim +1}_+ ) }^\Gwr
+ C_\ell  \sum_{\ell_0\le  j\not = k \le \ell}   \int  _{\R^{\Dim +1}_+ }  |v_n^j|  |v_n^k|  |v_n^k|^{\Gwr -2}  ,
\end{equation}
with $\R^{\Dim +1}_+= (0,\infty )\times \R^\Dim $.
Applying~\eqref{fCritSubu}, \eqref{fsmallu}, \eqref{fCritSbt}, and~\eqref{fCritSbq}, we see that
\begin{equation} \label{fCritSp}
\begin{split}
\sum_{ \ell_0 }^\ell   \|  v_n^j \| _{ L^\Gwr(\R^{\Dim +1}_+ ) }^\Gwr  &
\le \Csd  \sum_{ \ell_0 }^\ell   \|\Sol(-t_n^j)\widetilde{\psi}^j \| _{ H^1 }^\Gwr
\\ & \le \Csd \sum_{ \ell_0 }^\ell   \|\Sol(-t_n^j)\widetilde{\psi}^j \| _{ H^1 }^2
\le 2 C_1 \Csd ,
\end{split}
\end{equation}
for $n\ge \max\{n_1(\ell ), n_2(\ell)  \}$.
Next, observe that, given $\ell _0 \le j\not = k\le \ell$,
\begin{equation*}
 \int  _{\R^{\Dim +1}_+ }  |v_n^j|  |v_n^k|  |v_n^k|^{\Gwr -2} \le
  \| v _n^k\| _{ L^\Gwr (\R^{\Dim +1}_+ ) }^{\Gwr -2}  \Bigl( \int  _{\R^{\Dim +1}_+ }  |v_n^j|^{\frac {\Gwr} {2}}  |v_n^k|^{\frac {\Gwr} {2}}  \Bigr)^{\frac {2} {\Gwr} }.
\end{equation*}
Applying~\eqref{fsmallu}, we obtain
\begin{multline*}
 \int  _{\R^{\Dim +1}_+ }  |v_n^j|  |v_n^k|  |v_n^k|^{\Gwr -2} \le
  \|\Sol(-t_n^k)  \widetilde{\psi }^k \| _{ H^1 }^{\Gwr -2}
  \\ \times
   \Bigl( \int  _{\R^{\Dim +1} }  | \Sol(t-t_n^j)\widetilde{\psi}^j(x - x_n^j)|^{\frac {\Gwr} {2}}  |\Sol(t -t_n^k)\widetilde{\psi}^k(x - x_n^j)|^{\frac {\Gwr} {2}}  \Bigr)^{\frac {2} {\Gwr} }.
\end{multline*}
Since $\Sol (t)  \widetilde{\psi }^j $ and $\Sol (t)  \widetilde{\psi }^k $ are two given functions of $L^\Gwr (\R^{\Dim +1} )$ and $ |t_n^k-t_n^j|+  |x_n^k-x_n^j|\to \infty $ as $n\to \infty $
by~\eqref{div}  if $ \widetilde{\psi }^j $ and $ \widetilde{\psi }^k $ are both nonzero, we
deduce from Lemma~\ref{eRCpu} that the right hand side of the above inequality converges to $0$ as $n\to \infty $; and so,
\begin{equation} \label{fCritSh}
 \int  _{\R^{\Dim +1}_+ }  |v_n^j|  |v_n^k|  |v_n^k|^{\Gwr -2} \goto  _{ n\to \infty  }0.
\end{equation}
We deduce from~\eqref{fCritSs}, \eqref{fCritSp} and~\eqref{fCritSh} that, given any $\ell \ge \ell _0$, there exists $n_3(\ell) \ge 1$ such that
\begin{equation} \label{fCritSn}
\| \sum_{ \ell_0 }^\ell  v_n^j \| _{ L^\Gwr(\R^{\Dim +1}_+) }^\Gwr  \le 4 C_1 \Csd  ,\quad
\ell \ge \ell_0, n\ge n_3(\ell).
\end{equation}
We now estimate the $H^1$ norm.
Note that
\begin{equation} \label{fCritSuz}
 \Bigl\| \sum_{  j=\ell_0 }^\ell v_n^j (t)  \Bigr\| _{ H^1 }^2=
 \sum_{  j=\ell_0 }^\ell  \| v_n^j (t)\|_{ H^1 }^2
 + 2\sum_{ \ell_0 \le j\not = k\le \ell }( v_n^j(t), v_n^k(t)) _{ H^1 }.
\end{equation}
Applying~\eqref{fCritSubu}, \eqref{fsmallu}, \eqref{fCritSbt}, and~\eqref{fCritSbq}, we see that
\begin{equation} \label{fCritSuu}
 \sum_{  j=\ell_0 }^\ell  \| v_n^j (t)\|_{ H^1 }^2  \le \Csd   \sum_{ \ell_0 }^\ell   \|\Sol(-t_n^j)\widetilde{\psi}^j \| _{ H^1 }^2
\le 2 C_1 \Csd  ,
\end{equation}
for $n\ge \max\{n_1(\ell ), n_2(\ell)  \}$.
Next, given any $j\not = k\ge \ell _0$ it follows from~\eqref{fCritSbt} and Corollary~\ref{eCritu}
(recall that $ |t_n^k-t_n^j|+  |x_n^k-x_n^j|\to \infty $ as $n\to \infty $
by~\eqref{div}  if $ \widetilde{\psi }^j $ and $ \widetilde{\psi }^k $ are both nonzero)
that
\begin{equation} \label{fCritSud}
\sup  _{ t\in \R } |( v_n^j(t), v_n^k(t)) _{ H^1 }| \goto _{ n\to \infty  }0.
\end{equation}
We deduce from~\eqref{fCritSuz}, \eqref{fCritSuu} and~\eqref{fCritSud} that, given any $\ell \ge \ell _0$, there exists $n_4(\ell) \ge 1$ such that
\begin{equation} \label{fCritSut}
\| \sum_{ \ell_0 }^\ell  v_n^j \| _{ L^\infty ((0,\infty ), H^1)}^2  \le 4 C_1 \Csd  ,\quad
\ell \ge \ell_0, n\ge n_4(\ell).
\end{equation}
Now if $n_5(\ell)= \max\{ n_3(\ell), n_4(\ell)\}$, it follows from~\eqref{fCritSn}, \eqref{fCritSut} and~\eqref{fHolu}  that there is a constant $A_1$ independent of $\ell \ge \ell _0$ such that
\begin{equation} \label{fCritSuq}
\| \sum_{ \ell_0 }^\ell  v_n^j \| _{ L^\Awr ((0,\infty ), L^\Rwr)} \le A_1, \quad \ell\ge \ell _0, n\ge n_5(\ell).
\end{equation}
On the other hand, applying~\eqref{fEstlwru} we see that there exists a constant $A_2$ such that
\begin{equation}  \label{fCritSuc}
\| \sum_{ j=1 }^{\ell _0} v_n^j \| _{ L^\Awr ((0,\infty ), L^\Rwr)} \le A_2,
\end{equation}
for all $n\ge 1$. Setting $A= A_1+A_2$, we deduce from~\eqref{fCritSuq}-\eqref{fCritSuc} that
\begin{equation} \label{fCritSus}
\| u_n^\ell \| _{ L^\Awr ((0,\infty ), L^\Rwr)} \le A, \quad \ n\ge n_5(\ell).
\end{equation}
We now fix $\ell _1$ sufficiently large so that
\begin{equation} \label{fCritSup}
\limsup _{ n\to \infty  } \| \Es{\cdot }  W_n^{\ell _1} \|_{L^{\Awr}((0,\infty ), L^{\Rwr })} \le \frac {\varepsilon (A)} {2},
\end{equation}
where $A$ is gven by~\eqref{fCritSus} and
 $\varepsilon (A)$ is given by Proposition~\ref{LTPT}. (Such an $\ell _1$ exists by~\eqref{fAsmall}.)
Applying~\eqref{fCritSt}, we obtain
\begin{multline*}
\|\Es{\cdot }  \widetilde{W}_n^{\ell _1}\|_{L^{\Awr}((0,\infty ), L^{\Rwr })}   \le
\sum_{j=1}^{\ell _1}\| \Esm{t_n^j}\psi^j  -
\Sol(-t_n^j) \widetilde{\psi}^j \|_{H^1} \\  +  \| \Es{\cdot }  W_n^{\ell _1} \|_{L^{\Awr}((0,\infty ), L^{\Rwr })},
\end{multline*}
so that, applying~\eqref{fNLProfu} and~\eqref{fCritSup}, there exists $n_6\ge 1$ such that
\begin{equation} \label{fCritSuh}
 \| \Es{\cdot }  \widetilde{W}_n^{\ell _1} \|_{L^{\Awr}((0,\infty ), L^{\Rwr })} \le  \varepsilon (A),\quad n\ge n_6.
\end{equation}
Finally, it is not difficult to show that for every $\ell \ge 2$ there exists a constant $C_\ell $ such that
for all $(z_j) _{ 1\le j\le \ell }\subset \mathbb{C}^\ell$
\begin{equation} \label{fGert}
 \Bigl| \,  \Bigl| \sum_{ j=1 }^\ell z_j  \Bigr|^{\Pmu } \sum_{ j=1 }^\ell z_j - \sum_{ j=1 }^\ell  |z_j|^{
 \Pmu } z_j\Bigr| \le C_\ell \sum_{ 1\le j\not = k\le \ell }  |z_j|^{\Pmu }  |z_k|.
\end{equation}
It follows from~\eqref{fGert} that
\begin{equation} \label{fFunald}
\| e_n ^{\ell _1}\|_{L^{\Bwr '}((0,\infty ), L^{\Rwr '})}  \le C _{ \ell _1 }
\sum_{ 1\le j\not = k\le \ell _1} \| \, |v_n^j|^{\Pmu }
|v_n^k| \,\|_{L^{\Bwr '}((0,\infty ), L^{\Rwr '})} .
\end{equation}
Fix $1\le j\not = k\le \ell _1$ and
note that by~\eqref{fEstlwru}
$S(\cdot ) \widetilde{\psi }^j, S(\cdot ) \widetilde{\psi }^k\in  L^\Awr (\R, L^\Rwr (\R^\Dim )) $.
Recall that $(\Pwr) \Bwr ' =\Awr$ and $(\Pwr) \Rwr '= \Rwr$.
Thus, approximating $S(\cdot ) \widetilde{\psi }^k$ in $L^\Awr (\R, L^\Rwr (\R^\Dim ))$ by functions of $C^\infty _\Comp (\R^{\Dim +1} )$ and using~\eqref{div} , it is easy to see that
\begin{equation} \label{fFunalt}
 \| \, |v_n^j|^{\Pmu } |v_n^k| \,\|_{L^{\Bwr '}((0,\infty ), L^{\Rwr '})}\goto  _{ n\to \infty  }0.
\end{equation}
Applying~\eqref{fFunald}-\eqref{fFunalt}, we deduce that there exists $n_7$ such that
\begin{equation} \label{fFunalq}
\| e_n ^{\ell _1}\|_{L^{\Bwr '}((0,\infty ), L^{\Rwr '})}  \le \varepsilon (A),\quad n\ge n_7.
\end{equation}
Using now~\eqref{fCritSus}, \eqref{fCritSuh} (together with~\eqref{fCritSq})
 and~\eqref{fFunalq}  we see that for every $n\ge \max\{
n_5(\ell _1), n_6, n_7 \}$, we have that
$\| u_n^{\ell _1}\| _{ L^\Awr ((0,\infty ), L^\Rwr)} \le A$,
$\| \Es{\cdot }  (u_n (0)-  u_n^{\ell _1}(0)) \|_{L^{\Awr}((0,\infty ), L^{\Rwr })} \le  \varepsilon (A)$ and $\| e_n ^{\ell _1}\|_{L^{\Bwr '}((0,\infty ), L^{\Rwr '})}  \le \varepsilon (A)$.
Applying Proposition~\ref{LTPT}, we conclude that  $\phi _n \in \Nonad$, which is absurd.

Thus we see that $ \widetilde{\omega } =\omega _0$.
It follows in particular from~\eqref{fDefoz} and~\eqref{fSupld} that $ \widetilde{M} =1$.
Applying~\eqref{fCritp}, we deduce that $\psi ^j=0$ for all $j\ge 2$, i.e. $J=2$ with the notation of Theorem~\ref{ProfileE}. Therefore, setting $\psi =\psi ^1$, $W_n= W_n^1$,
 $\tau _n=t_n^1$, $ \overline{\tau } = \overline{\tau }^1 $ and $x_n= x_n^1$ we see that  $M(\psi )=1$, \eqref{ePrinu:d} holds,  and~\eqref{ePrinu:u} follows from~\eqref{fCritq}.
Since $M(\phi _n)= M(\psi )=1$, it follows from~\eqref{fAsplitu} with $\lambda =0$ that
$ \|W_n \| _{ L^2 } \to 0$ as $n\to \infty $.
Next, since $ \widetilde{M} =1$ and $ \widetilde{\omega } =\omega _0$, we deduce from~\eqref{fDefoz} that $ \widetilde{E} = \omega _0 E(Q) M(Q)^\Siwr$;  and so by~\eqref{fSupld}
$\limsup  E( \Esm{\tau _n} \psi )= \omega _0 E(Q) M(Q)^\Siwr$ as $n\to \infty $.
Note that by~\eqref{eNLP:d} the lim sup is a limit, so that $E( \Esm{\tau _n} \psi )\to  \omega _0 E(Q) M(Q)^\Siwr$. Applying now~\eqref{fEsplit} and~\eqref{fYAM}  we obtain
$\limsup   E( W_n )=0$, and we deduce from~\eqref{fCritud} that $ \|\nabla W_n \| _{ L^2 } \to 0$ as $n\to \infty $. Therefore, $ \|W_n\| _{ H^1 } \to 0$. This completes the proof.
\end{proof}

\begin{proof}[Proof of Proposition~$\ref{critical}$]
We first show the existence of the critical solution.
By definition of $\omega _0$, there exist a sequence $(\omega _n) _{ n\ge 1 }\subset (0,1)$ and a sequence $(\phi _n) _{ n\ge 1 }$ such that 
$\phi _n\in \Inv  _{ \omega _n }$ and $\phi _n \not \in \Nonad$.
Note that the quantities $E(u) M(u)^\Siwr$ and $\| \nabla  u
\|_{L^2} \| u \|_{L^2}^ \Siwr $ are both invariant under the scaling
$u\mapsto \lambda ^{\frac {2} {\Pmu }} u(\lambda  \cdot )$.
Since~\eqref{SCH} is invariant under the scaling~\eqref{scaling}, we may assume
that $M(\phi _n)= 1$, and so we may apply Lemma~\ref{ePrinu}.
It follows from~\eqref{ePrinu:u} that
$ \phi_n =  \Esm{\tau _n} \psi (\cdot - x_n) + W_n$ for all $n \ge 1$.
Furthermore, we deduce from~\eqref{ePrinu:t} and~\eqref{fStrt} that
\begin{equation} \label{fCCpu}
 \| \Es{\cdot }  W_n \|_{L^\Awr ((0,\infty ), L^\Rwr )} \goto _{n \to \infty  }0.
\end{equation}
Let $ \widetilde{\psi } $ be the nonlinear profile associated to $\psi $ by Lemma~\ref{eNLP}.
Note that $M(\psi )=1$ so that by~\eqref{eNLP:t}, $M( \widetilde{\psi }) =1$.
Suppose
\begin{equation} \label{fCCpt}
 \| \Sol (\cdot )\widetilde{\psi }\| _{ L^\Awr (\R, L^\Rwr) }<\infty .
\end{equation}
We observe that
\begin{equation*}
\phi _n - \Sol(-\tau _n)  \widetilde{\psi } (\cdot -x_n)=
\Esm{\tau _n} \psi (\cdot - x_n ) - \Sol(-\tau _n)  \widetilde{\psi } (\cdot -x_n) + W_n,
\end{equation*}
so that applying~\eqref{eNLP:t}, \eqref{fStrt} and~\eqref{fCCpu}
\begin{equation} \label{fCCpq}
\| \Es{\cdot }  (\phi _n - \Sol(-\tau _n)  \widetilde{\psi } (\cdot -x_n) ) \|_{L^\Awr ((0,\infty ), L^\Rwr )}  \goto _{ n\to \infty  }0.
\end{equation}
It follows from~\eqref{fCCpt}, \eqref{fCCpq} and Proposition~\ref{LTPT} that $\phi _n\in \Nonad$  for all large $n$, which is absurd. (Recall that $\phi _n\not \in \Nonad$ by construction).
Thus~\eqref{fCCpt} fails, so that $\| \Sol (\cdot )\widetilde{\psi }\| _{ L^\Awr ((0,\infty ), L^\Rwr) }= \infty $ or $ \| \Sol (\cdot ) \overline{\widetilde{\psi }} \| _{ L^\Awr ((0,\infty ), L^\Rwr) }= \| \Sol (\cdot )\widetilde{\psi }\| _{ L^\Awr ((-\infty ,0), L^\Rwr) }= \infty $ (by~\eqref{fFlowd}).
In the first case we let $\Fcr =  \widetilde{\psi } $, and in the second case we let $\Fcr =   \overline{\widetilde{\psi }}  $. In both cases, the conclusions immediately follow.
(Note that $M(\Fcr)=1$ by Lemma~\ref{ePrinu}.)

We now prove the compactness property and we begin with the following claim.

\begin{claim} \label{fClaim}
For every sequence $(n_k) _{ k\ge 1 }\subset \N$, $n_k \to \infty $, there is a subsequence, which we still denote by $(n_k) _{ k\ge 1 }$, $(x_k) _{ k\ge 1 }\subset \R^\Dim $ and $v\in H^1 (\R^\Dim ) $ such that $\Ucr (n_k, \cdot -x_k)\to v$ in $H^1 (\R^\Dim ) $.
\end{claim}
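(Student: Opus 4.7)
The plan is to apply Lemma~\ref{ePrinu} to $\phi_k := \Ucr(n_k)$. Flow invariance \eqref{fInvKo}, mass conservation, and the fact that $\Fcr \notin \Nonad$ (together with local well-posedness on $[0,n_k]$, which rules out $\phi_k \in \Nonad$) give $\phi_k \in \Inv_{\omega_0}$, $M(\phi_k)=1$, and $\phi_k \notin \Nonad$. Lemma~\ref{ePrinu} (with the constant sequence $\omega_n \equiv \omega_0$) therefore yields, after extraction, $\psi \in H^1$ with $M(\psi)=1$, sequences $(y_k)\subset \R^\Dim$ and $(\tau_k)\subset [0,\infty)$ with $\tau_k\to\overline{\tau}\in[0,\infty]$, and remainders $W_k$ with $\|W_k\|_{H^1}\to 0$, such that
\begin{equation*}
\phi_k = \Esm{\tau_k}\psi(\cdot - y_k) + W_k.
\end{equation*}
If $\overline{\tau}<\infty$, continuity of $(\Esm{t})_{t\in\R}$ on $H^1$ immediately gives $\Ucr(n_k,\cdot+y_k) \to \Esm{\overline{\tau}}\psi$ in $H^1$, so we may take $x_k=-y_k$ and $v=\Esm{\overline{\tau}}\psi$.

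The entire difficulty is to rule out $\overline{\tau}=\infty$. Let $\widetilde{\psi}$ be the nonlinear profile furnished by Lemma~\ref{eNLP} applied to $\psi$ and the sequence $(\tau_k)$ (its hypothesis $\Esm{\tau_k}\psi \in \Inv_\omega$ for $\omega<1$ close to $\omega_0$ and $k$ large follows from the energy and gradient identities of Theorem~\ref{ProfileE} together with $\|W_k\|_{H^1}\to 0$). A direct forward perturbation of $\Sol(\cdot)\phi_k$ by $\Sol(\cdot-\tau_k)\widetilde{\psi}(\cdot-y_k)$ is obstructed because nothing forces $\widetilde{\psi}$ itself to lie in $\Nonad$. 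The key observation is that it is the \emph{conjugate} $\overline{\widetilde{\psi}}$ that must scatter forward. Indeed, \eqref{fFlowd} together with the identity $\overline{\Esm{t}\psi}=\Es{t}\overline{\psi}$ converts the backward wave-operator relation for $\widetilde{\psi}$ into
\begin{equation*}
\|\Sol(t)\overline{\widetilde{\psi}} - \Es{t}\overline{\psi}\|_{H^1} \goto_{t\to\infty} 0.
\end{equation*}
Since $\|\Es{\cdot}\overline{\psi}\|_{L^\Awr((T,\infty),L^\Rwr)} \to 0$ as $T\to\infty$ by Strichartz~\eqref{fStrt} and dominated convergence, a standard use of Proposition~\ref{LTPT} on $(T,\infty)$ for $T$ large (comparing $\Sol(\cdot)\overline{\widetilde{\psi}}$ with the free evolution $\Es{\cdot}\overline{\psi}$), combined with local well-posedness on $[0,T]$, yields $\|\Sol(\cdot)\overline{\widetilde{\psi}}\|_{L^\Awr((0,\infty),L^\Rwr)} < \infty$, i.e. $\overline{\widetilde{\psi}}\in\Nonad$.

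The contradiction is now obtained by applying Proposition~\ref{LTPT} to the conjugated flow. Set
\begin{equation*}
u(t) = \Sol(t)\overline{\phi_k} = \overline{\Ucr(n_k-t)}, \qquad \widetilde{u}(t) = \Sol(t+\tau_k)\overline{\widetilde{\psi}}(\cdot-y_k);
\end{equation*}
both are exact NLS solutions, so $e\equiv 0$. Translation invariance of $L^\Rwr$ and $\tau_k\geq 0$ give $\|\widetilde{u}\|_{L^\Awr((0,\infty),L^\Rwr)} \leq \|\Sol(\cdot)\overline{\widetilde{\psi}}\|_{L^\Awr((0,\infty),L^\Rwr)} =: B$ uniformly in $k$, while \eqref{fFlowd}, Lemma~\ref{eNLP}, and $\|W_k\|_{H^1}\to 0$ yield
\begin{equation*}
\|u(0)-\widetilde{u}(0)\|_{H^1} = \|\phi_k - \Sol(-\tau_k)\widetilde{\psi}(\cdot-y_k)\|_{H^1} \goto_{k\to\infty} 0,
\end{equation*}
so that $\|\Es{\cdot}(u(0)-\widetilde{u}(0))\|_{L^\Awr((0,\infty),L^\Rwr)}\to 0$ by Strichartz~\eqref{fStrt}. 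Proposition~\ref{LTPT} with $A=B$ therefore gives $\|u\|_{L^\Awr((0,\infty),L^\Rwr)} \leq B+C\varepsilon$ for $k$ large. Since $|u(t,x)|=|\Ucr(n_k-t,x)|$, this reads $\|\Ucr\|_{L^\Awr((-\infty,n_k),L^\Rwr)} \leq B+C\varepsilon$; letting $k\to\infty$ we obtain $\|\Ucr\|_{L^\Awr(\R,L^\Rwr)}<\infty$, hence $\Fcr\in\Nonad$, contradicting the construction of $\Fcr$. The main obstacle is exactly this passage through the conjugate profile: the linear profile $\Esm{\cdot}\psi$ does scatter, but this alone fails to bound the forward $L^\Awr L^\Rwr$ norm of $\widetilde{\psi}$; what saves the argument is that the same linear scattering makes $\overline{\widetilde{\psi}}$ a legitimate scattering approximate solution usable in Proposition~\ref{LTPT}.
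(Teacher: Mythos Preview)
Your proof is correct and follows the same overall strategy as the paper: apply Lemma~\ref{ePrinu} to $\phi_k=\Ucr(n_k)$, handle $\overline{\tau}<\infty$ directly, and rule out $\overline{\tau}=\infty$ by showing it would force $\|\Ucr\|_{L^\Awr((-\infty,n_k),L^\Rwr)}$ to be bounded, via conjugation and Proposition~\ref{LTPT}.

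The difference lies in how the case $\overline{\tau}=\infty$ is excluded. You pass through the nonlinear profile $\widetilde{\psi}$ of Lemma~\ref{eNLP}, first prove $\overline{\widetilde{\psi}}\in\Nonad$ (one use of Proposition~\ref{LTPT}, comparing with the free flow $\Es{\cdot}\overline{\psi}$), and then compare $\Sol(\cdot)\overline{\phi_k}$ with the translated nonlinear profile $\Sol(\cdot+\tau_k)\overline{\widetilde{\psi}}(\cdot-y_k)$ (a second use of Proposition~\ref{LTPT}). The paper shortcuts this: from $\overline{\phi_k}=\Es{\tau_k}\overline{\psi}(\cdot-x_k)+\overline{W_k}$ one reads off immediately that
\[
\|\Es{\cdot}\overline{\phi_k}\|_{L^\Awr((0,\infty),L^\Rwr)}\le \|\Es{\cdot}\overline{\psi}\|_{L^\Awr((\tau_k,\infty),L^\Rwr)}+C\|W_k\|_{H^1}\goto_{k\to\infty}0,
\]
and a single application of Proposition~\ref{LTPT} with $\widetilde{u}=e=0$ yields the bound on $\|\Sol(\cdot)\overline{\phi_k}\|_{L^\Awr((0,\infty),L^\Rwr)}$. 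Thus the paper never needs the nonlinear profile at this stage, nor the intermediate fact that $\overline{\widetilde{\psi}}\in\Nonad$; your argument reaches the same contradiction but with an extra, avoidable layer.
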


To prove the claim, we apply Lemma~\ref{ePrinu}  to the sequence $\phi _k= \Ucr  (n_k)$
and we obtain
\begin{gather}
\Ucr (n_k) = \Esm{\tau _k} \psi (\cdot -x_k) + W_k,\quad k\ge 1\label{fComp:u} \\
\tau _k \goto _{ k\to \infty  } \overline{\tau }, \label{fComp:d}  \\
 \|W_k\| _{ H^1 } \goto _{ k\to \infty  }0. \label{fComp:t}
\end{gather}
We first assume $ \overline{\tau } =\infty $.
Note that $ \overline{\Ucr  (n_k)}  = \Es{\tau _k}  \overline{\psi}  (\cdot -x_k) +  \overline{W_k}$,
so that
\begin{equation}
 \| \Es{\cdot }  \overline{\Ucr  (n_k)}\|  _{ L^\Awr ((0,\infty ), L^\Rwr ) }
 \le  \| \Es{\cdot }  \overline{\psi } \|  _{ L^\Awr ((\tau _k,\infty ), L^\Rwr ) }
  +C  \|W_k\| _{ H^1 }.
\end{equation}
Since $\tau _k \to \infty $ and $ \|W_k\| _{ H^1 } \to 0$, we see that $ \| \Es{\cdot }  \overline{\Ucr  (n_k)}\|  _{ L^\Awr ((0,\infty ), L^\Rwr ) } \to 0$. In particular, for all sufficiently large $k$
\begin{equation}
\| \Es{\cdot }  \overline{\Ucr (n_k)}\|  _{ L^\Awr ((0,\infty ), L^\Rwr ) } \le \varepsilon (0),
\end{equation}
where $\varepsilon (\cdot )$ is given by Proposition~\ref{LTPT}.
Applying Proposition~\ref{LTPT} (with $ \widetilde{u} =e=0$ and $u= \Sol (\cdot ) \overline{\Ucr  (n_k)}$), we conclude that
\begin{equation*}
\| \Sol (\cdot ) \overline{\Ucr  (n_k)}\|  _{ L^\Awr ((0,\infty ), L^\Rwr ) } \le C (0),
\end{equation*}
for all large $k$. Note that by~\eqref{fFlowd}
$\Sol(t)  \overline{\Ucr (n_k)}=  \overline{ \Sol (-t+n_k)  \Fcr } $, so that the above inequality means
\begin{equation}
\| \Ucr  \|  _{ L^\Awr ((-\infty ,n_k), L^\Rwr ) } \le C (0).
\end{equation}
Letting $k\to \infty $ we obtain $\Ucr  \in L^\Awr (\R, L^\Rwr (\R^\Dim ))$. Thus
$\Fcr \in \Nonad$, which is absurd.
Therefore, we must have $ \overline{\tau } <\infty $ and we deduce from~\eqref{fComp:u}--\eqref{fComp:t} that  $ \Ucr (n_k, \cdot +x_k) \goto \Esm{ \overline{\tau } } \psi $, which proves our claim.

Next, we show that there exists a sequence $(y_n) _{ n\ge 1 }\subset \R^\Dim $ such that
the sequence $( \Ucr (n, \cdot -y_n)) _{ n\ge 1 }$ is relatively compact in $H^1 (\R^\Dim ) $.
We first observe there exist $R<\infty $ and $(y_n) _{ n\ge 1 }\subset \R^\Dim $ such that
\begin{equation} \label{fOu}
\int  _{  |x-y_n|<R } | \Ucr(n, \cdot -y_n)|^2 \ge \frac {3} {4} .
\end{equation}
Indeed, otherwise there exist a sequence $(n_k) _{ k\ge 1 }$ and a sequence $R_k\to \infty $ such that
\begin{equation} \label{fOd}
\sup  _{ y\in \R^\Dim  }\int  _{  |x-y|<R_k } | \Ucr (n_k, \cdot -y ) |^2 \le  \frac {3} {4} .
\end{equation}
If $n_k$ is bounded, this is absurd (since $\Ucr (n_k)$ belongs to a compact subset of $H^1 (\R^\Dim ) $), so we may assume $n_k\to \infty $.
By Claim~\ref{fClaim}, we deduce that, after possibly extracting, there exist $(x_k) _{ k\ge 1 }\subset \R^\Dim $ and $v\in H^1 (\R^\Dim ) $ such that $\Ucr (n_k, \cdot -x_k) \to v$. In particular, $ \|v\| _{ L^2 }=1$, so there exist $\rho >0$ and $z\in \R^\Dim $ such that
\begin{equation} \label{fOt}
\int  _{  |x-z|<\rho  } |v|^2 > \frac {3} {4} .
\end{equation}
It follows that
\begin{equation} \label{fOq}
\int  _{  |x-z|<\rho  } |\Ucr (n_k, \cdot -x_k) |^2 > \frac {3} {4} ,
\end{equation}
for $k$ large. This contradicts~\eqref{fOd}, thus proving~\eqref{fOu}.
We now show that the sequence $(y_n) _{ n\ge 1 }$ has the desired property.
Indeed,  consider a sequence
$n_k\to \infty $. By Claim~\ref{fClaim}, we see that (after possibly extracting) there exist $(x_k) _{ k\ge 1 }\subset \R^\Dim $ and $v\in H^1 (\R^\Dim ) $ such that $\Ucr (n_k, \cdot -x_k) \to v$.
Arguing as above, we deduce that there exists $\rho >0$ such that~\eqref{fOt} holds.
Since $M(\Fcr)=1$, \eqref{fOu} and~\eqref{fOt} show that $ |y _{ n_k }- x_k|< R+\rho $.
Thus by possibly extracting we may assume that $y _{ n_k }- x_k \to  \overline{x} \in \R^N $ as $k\to \infty $. It easily follows that $\Ucr (n_k, \cdot -y _{ n_k }) \to v(\cdot - \overline{x} )$ in $H^1 (\R^\Dim ) $ as $k\to \infty $. Thus we have proved that any subsequence of $( \Ucr (n, \cdot -y_n)) _{ n\ge 1 }$ has a converging subsequence; and so
  $( \Ucr (n, \cdot -y_n)) _{ n\ge 1 }$ is relatively compact in $H^1 (\R^\Dim ) $.

It now follows from  Remark~\ref{eRemu} that $(\Sol(1) \Ucr  (n, \cdot -y_n)) _{ n\ge 1 }$ is also relatively compact in $H^1 (\R^\Dim ) $.
Since $\Sol (1)\Ucr  (n, \cdot -y_n)= \Ucr  (n+1, \cdot -y_n)$, we see that both the sequences $( \Ucr  (n, \cdot -y_n)) _{ n\ge 1 }$ and $( \Ucr (n, \cdot -y_{n-1})) _{ n\ge 2 }$ are relatively compact.
In other words, if we set $v_n= \Ucr (n, \cdot -y_n)$, then $(v_n) _{ n\ge 1 }$ and $(v_n (\cdot -y_{n-1} +y_n)) _{ n\ge 2 }$ are both relatively compact.
Since $ \|v_n\| _{ L^2 } =  \| \Fcr \| _{ L^2 }=1$, it easily follows that
\begin{equation*}
R: = \sup _{ n\ge 2 } |y_n- y _{ n-1 }| <\infty .
\end{equation*}
Note that by Remark~\ref{eRemu},
\begin{equation*}
E= \{ \Ucr  (n+\theta , \cdot -y_n+y);\, n\ge 1, 0\le \theta \le 1,  |y|\le R \},
\end{equation*}
is relatively compact. We finally define $x\in C([0,\infty ))$ for $t\ge 0$ by $x(t)= y_1$ for $0\le t<1$ and $x(t)= y_n+ (t-n) (y _{ n+1 }-y_n)$ for $n\le t<n+1$, $n\ge 1$.
Since $\Ucr (t, \cdot -x(t)) \in E$ for all $t\ge 1$, we see that $\union  _{ t\ge 0 } \{ \Ucr  (t, \cdot -x(t)) \}$ is relatively compact, and this completes the proof.
\end{proof}

\section{Rigidity} \label{sRigidity}

The main result of this section is the following rigidity, or Liouville-type theorem, which implies in particular that the critical solution constructed in Proposition~\ref{critical} must be identically zero.
 It is similar to  Theorem~6.1 in~\cite{DHR} which concerns the 3D cubic NLS.
 The proof of~\cite{DHR} is easily adapted to the present situation, and we give it for completeness.

\begin{thm}\label{Rigidity}
Let $\DI \in \Inv$ and $u\in C([0,\infty ), H^1(\R^\Dim ))$ the corresponding solution of~\eqref{SCH}.  If there exists a function $x\in C([0,\infty ),\R^\Dim )$ such that
\begin{equation} \label{ep}
\sup  _{ t\ge 0 } \int  _{ \{  |x+x(t)|>R \} }\{  |\nabla u(t,x)|^2 +  |u(t,x)|^{\Ppu } + |u(t,x)|^2 \}
\goto  _{ R\to \infty  }0,
\end{equation}
then $\DI  = 0$.
\end{thm}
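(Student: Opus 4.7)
The approach is the Kenig--Merle rigidity scheme adapted to the mass-supercritical, energy-subcritical setting as in~\cite{DHR}: a truncated virial (Morawetz) identity combined with the quantitative coercivity estimate~\eqref{fCdeltat} from Lemma~\ref{Cdelta}. Writing
\[
\mathcal{R}(w) = 8\|\nabla w\|_{L^2}^2 - \frac{4\Dim\Pmu}{\Ppu}\|w\|_{L^{\Ppu}}^{\Ppu},
\]
conservation of mass and energy together with~\eqref{fCdeltat} yield some $\delta\in(0,1]$, depending only on $E(\DI)M(\DI)^\Siwr/[E(Q)M(Q)^\Siwr]$, such that $\mathcal{R}(u(t)) \ge 8\delta\|\nabla u(t)\|_{L^2}^2$ for every $t\ge 0$. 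Arguing by contradiction, assume $\DI\neq 0$. Then the compactness hypothesis~\eqref{ep} combined with~\eqref{fCdeltau} shows that $\|\nabla u(t)\|_{L^2}$ is uniformly bounded below by a positive constant, so $\mathcal{R}(u(t)) \ge c_0 > 0$ uniformly in $t\ge 0$.

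The first step is to reduce to the case of zero momentum. A Galilean boost $u(t,x)\mapsto e^{i(x\cdot\xi_0 - t|\xi_0|^2)}u(t, x - 2t\xi_0)$ preserves~\eqref{SCH} and the mass, and shifts the momentum linearly; moreover $E(u)M(u)^\Siwr$ is minimized over boosts precisely at $P(u)=0$, so choosing $\xi_0 = P(\DI)/(2M(\DI))$ lowers the energy and the boosted datum remains in $\Inv$. The compactness property~\eqref{ep} persists with the shifted path $x(t) + 2t\xi_0$. Assuming henceforth $P(\DI)=0$, a standard localized first-moment argument (cf.~\cite{DHR}) combining~\eqref{ep} with $P(u)\equiv 0$ yields the trajectory bound $|x(t)|/t \to 0$ as $t\to\infty$.

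Next I would set up the truncated virial. Fix a radial $\theta\in C_c^\infty(\R^\Dim)$ with $\theta(x)=|x|^2$ for $|x|\le 1$ and $\theta$ supported in $\{|x|\le 2\}$, and put $\theta_R(x)=R^2\theta(x/R)$. Define $I_R(t) = \int_{\R^\Dim} \theta_R(x)|u(t,x)|^2\,dx$. A direct computation gives $I_R'(t) = 2\im\int \nabla\theta_R\cdot \overline{u}\,\nabla u$ and $I_R''(t) = \mathcal{R}(u(t)) + \mathcal{E}_R(t)$, where
\[
|\mathcal{E}_R(t)| \le C\int_{|x|\ge R}\bigl(|\nabla u(t,x)|^2 + |u(t,x)|^{\Ppu} + R^{-2}|u(t,x)|^2\bigr)\,dx.
\]
For $R$ large compared to $\sup_{0\le t\le T}|x(t)|$, the change of variables $y = x - x(t)$ and hypothesis~\eqref{ep} give $|\mathcal{E}_R(t)|\le c_0/2$ on $[0,T]$, whence $I_R''(t)\ge c_0/2$ and therefore $I_R(T) \ge (c_0/4)T^2$. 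Since $|I_R(t)|\le CR^2 M(\DI)$ by construction of $\theta_R$, one obtains $CR^2 \ge (c_0/4)T^2$. Using $|x(t)|=o(t)$, one may choose $R = \varepsilon T$ with $\varepsilon$ arbitrarily small and $T$ large, giving $C\varepsilon^2 \ge c_0/4$, a contradiction. Therefore $\DI = 0$.

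The main obstacle is the interplay between the moving center $x(t)$ and the fixed-origin virial cutoff: if $|x(t)|$ were to grow linearly in $t$, the error $\mathcal{E}_R(t)$ would be of the same order as $\mathcal{R}(u(t))$ and no contradiction would ensue. The sublinear bound $|x(t)| = o(t)$, which rests on first reducing to $P(\DI)=0$ by Galilean invariance and then exploiting compactness via a localized center-of-mass identity, is the key technical input; the virial computation and the coercivity from Lemma~\ref{Cdelta} combine in an essentially automatic way once this bound is in place.
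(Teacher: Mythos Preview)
Your proposal is correct and follows essentially the same strategy as the paper: Galilean reduction to zero momentum, the sublinear trajectory bound $|x(t)|=o(t)$ via a localized center-of-mass (first-moment) argument, and then a contradiction from a truncated virial identity combined with the coercivity estimate~\eqref{fCdeltat}. The only cosmetic difference is that the paper integrates the virial identity once (using $|Z_R'|\le AR$) rather than twice as you do; also note that the uniform lower bound $c_0>0$ comes simply from $\|\nabla u(t)\|_{L^2}^2\ge 2E(\DI)>0$ (energy conservation plus~\eqref{fCdeltau}), so compactness is not needed at that step.
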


We will use the following local version of the Virial identity.

\begin{lem}\label{local virial}
Let $\chi \in C^\infty _\Comp ([0,\infty ))$ and $T>0$. If $u\in C([0,T], H^1
(\R^\Dim  ) )$ is a solution of~\eqref{SCH}, then the function $t\mapsto \int _{ \R^\Dim  }
\chi( r ) |u(t,x)|^2 dx $ belongs to $C^2([0,T])$. Moreover,
\begin{equation*}
\frac {d} {dt} \int _{ \R^\Dim } \chi( r) |u(t,x)|^2 dx= 2 \im \int  _{ \R^\Dim  }
\chi '( r )  \overline{u} \frac {\partial u} {\partial r}\, dx,
\end{equation*}
and
\begin{multline*}
\frac {d ^2} {dt^2} \int_{ \R^\Dim } \chi(r) |u(t,x)|^2 dx =
8\| \nabla  u \|_{L^2}^2 - \frac{4 \Dim \Pmu}{\Ppu }\| u \|_{L^{\Ppu }}^{\Ppu }
\\
+ 4 \int _{ \R^\Dim } \Bigl( \frac {\chi '(r)} {r} -2 \Bigr)  |\nabla u|^2
+ 4 \int  _{ \R^\Dim } \Bigl( \chi '' (r) - \frac {\chi '(r)} {r} \Bigr)  \Bigl| \frac {\partial u} {\partial r} \Bigr| ^2
\\ +   \frac {2 \Pmu } {\Ppu } \int _{ \R^\Dim } (2\Dim -\Delta \chi )  |u|^{\Ppu }
-  \int _{ \R^\Dim }  |u|^2 \Delta ^2\chi  ,
\end{multline*}
for all $0\le t\le T$. (In the above formulas, $\chi $ is considered either as a function of $r= |x|$ or as a function of $x$.)
\end{lem}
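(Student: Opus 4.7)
The plan is to establish the identities first for regular solutions, where differentiation under the integral and integration by parts are classically justified, and then to extend to general $H^1$-solutions via the continuous dependence of the NLS flow in $C([0,T],H^1)$.

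Assume first $\DI \in H^2(\R^\Dim )$, so that standard local theory gives $u\in C([0,T],H^2)\cap C^1([0,T],L^2)$ satisfying $iu_t = -\Delta u - |u|^\Pmu u$ pointwise a.e. Since $\chi \in C^\infty _\Comp$, differentiation under the integral is immediate and
\begin{equation*}
\frac{d}{dt}\int _{\R^\Dim } \chi(r)|u|^2\,dx = -2\im \int _{\R^\Dim } \chi(r)\,\bar u\,(\Delta u + |u|^\Pmu u)\,dx.
\end{equation*}
The nonlinear contribution vanishes because $\chi|u|^\Ppu $ is real, and one integration by parts (with no boundary contributions since $\chi$ is compactly supported) delivers the stated first-derivative formula.

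For the second derivative, I would invoke the generalized virial identity for a smooth weight $\phi(x)=\chi(|x|)$,
\begin{equation*}
\frac{d^2}{dt^2}\int \phi\,|u|^2\,dx = 4\re\int \partial_j\partial_k\phi\,\partial_j u\,\overline{\partial_k u}\,dx - \int \Delta^2\phi\,|u|^2\,dx - \frac{2\Pmu }{\Ppu }\int \Delta\phi\,|u|^\Ppu \,dx,
\end{equation*}
obtained by differentiating the first-derivative formula once more and integrating by parts repeatedly (a textbook computation; cf.\ Chapter~6 of~\cite{Css}). For a radial weight, $\partial_j\partial_k\chi = (\chi'(r)/r)\delta _{jk} + (\chi''(r)-\chi'(r)/r)\,x_j x_k/r^2$, so the Hessian term becomes $4\int (\chi'(r)/r)|\nabla u|^2 + 4\int (\chi''(r)-\chi'(r)/r)|\partial_r u|^2$. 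Adding and subtracting the standard virial combination $8\|\nabla u\|_{L^2}^2 - \frac{4\Dim \Pmu }{\Ppu }\|u\|_{L^\Ppu }^\Ppu $---which corresponds to the choice $\chi(r)=r^2$, for which $\chi'/r=2$, $\Delta\chi=2\Dim$ and $\Delta^2\chi=0$---then rearranges the identity into exactly the form stated.

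For general $\DI \in H^1(\R^\Dim )$ I would approximate $\DI $ by $\DI _n\in H^2$ in $H^1$ and invoke continuous dependence on $[0,T]$, so that $u_n\to u$ in $C([0,T],H^1)$. Because $\chi\in C^\infty _\Comp$ and $\Ppu $ is strictly below the $H^1$-Sobolev exponent (cf.~\eqref{fPwr}), every integrand appearing in the two identities depends continuously on $u$ in the $H^1$-topology, so both right-hand sides converge uniformly on $[0,T]$. Passing to the limit gives the identities and, since the resulting second derivative is continuous in $t$, one obtains $t\mapsto \int \chi(r)|u|^2\,dx \in C^2([0,T])$. The only delicate point is this last passage to the limit; its straightforwardness rests on the compact support of $\chi$ and the $H^1$-subcriticality of $\Ppu $. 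Steps~1 and~2 are routine calculations.
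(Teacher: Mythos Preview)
Your proof is correct and follows essentially the same approach as the paper's own proof: first treat $H^2$ initial data, where $u\in C([0,T],H^2)\cap C^1([0,T],L^2)$ and the identities follow from direct calculation (the paper cites Lemma~2.9 in~\cite{Kavian} and formula~(6.5.35) in~\cite{Css} for this), then pass to general $H^1$ data by approximation and continuous dependence. Your presentation simply spells out a few more details of the computation and of the limiting argument than the paper does.
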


\begin{proof}
If $\DI \in H^2( \R^\Dim )$, then $u\in C([0,T], H^2( \R^\Dim )) \cap C^1([0,T], L^2( \R^\Dim ))$
and the result follows from direct calculations. See Lemma~2.9 in~\cite{Kavian}
or formula~(6.5.35) in~\cite{Css}. The general case follows by approximating $\DI $ by smooth functions and from continuous dependence.
\end{proof}

\begin{proof}[Proof of Theorem~$\ref{Rigidity}$]
We assume by contradiction that $\DI \not = 0$ and
we first note that we may assume without loss of generality that $\DI $ has null momentum, i.e.
\begin{equation} \label{fMomu}
P(\DI )= 0.
\end{equation}
Indeed, set
\begin{equation} \label{fGalu}
y_0= - \frac {P(\DI )} {M(\DI )},
\end{equation}
and let $ \widetilde{\DI} $ be defined by
\begin{equation}  \label{fGald}
  \widetilde{\DI}  (x)= e^{i x \cdot  y_0} \DI (x).
\end{equation}
By elementary calculations,  $  \widetilde{\DI} \in H^1 (\R^\Dim )  $,
$ \|  \widetilde{\DI}  \| _{ L^2 }=  \|\DI \| _{ L^2 }$, $ \| \widetilde{\DI}  \| _{ L^{\Ppu } }=  \|\DI \| _{ L^{\Ppu } }$ and $ \|\nabla   \widetilde{\DI} \| _{ L^2 }^2 =  \|\nabla \DI \| _{ L^2 }^2 - 2 |P(\DI )|^2 M(\DI )^{-1}\le  \|\nabla \DI \| _{ L^2 }^2$. In particular, we see that $ \widetilde{\DI} \not = 0$ and $  \widetilde{\DI}  \in \Inv$. Moreover, by Galilean invariance, the corresponding solution
$ \widetilde{u} \in C([0,\infty ), H^1(\R^\Dim ))$ of~\eqref{SCH} is given by
\begin{equation*}
 \widetilde{u} (t,x)= e^{ i(x\cdot y_0 -t  |y_0|^2) } u(t,x -2ty_0).
\end{equation*}
It follows easily that there exists a constant $C$ independent of $t,x$ such that
\begin{equation*}
[ |\nabla  \widetilde{u} |^2 +  | \widetilde{u} |^{\Ppu } +  | \widetilde{u} |^2 ] (t,x)
\le C [ |\nabla u|^2 +  |u|^{\Ppu } + |u|^2] (t, x-2ty_0),
\end{equation*}
so that $ \widetilde{u} $ satisfies the assumption~\eqref{ep} with $x(t)$ replaced by $ \widetilde{x} (t)= x(t)- 2t y_0$.
Thus we see that $  \widetilde{\DI} $ satisfies all the assumptions of the theorem, along with the null momentum condition~\eqref{fMomu}.

We now proceed in two steps.

\medskip
\Step1 We show that
\begin{equation} \label{fLocal}
\frac { |x(t)|} {t} \goto  _{ t\to \infty  }0.
\end{equation}
Indeed, otherwise there exist $\delta  > 0$ and a sequence $t_n
\to \infty$  such that
\begin{equation}
|x(t_n)| \ge \delta  t_n.
\end{equation}
Without loss of generality, we may suppose $x(0) = 0$.
Let
\begin{equation} \label{fDftanu}
\tau _n = \inf \left\{ t \ge 0; |x(t)| \ge |x(t_n)| \right\}.
\end{equation}
It follows easily from~\eqref{fDftanu} that $0<\tau _n\le t_n$ and $ |x(\tau _n)|=  |x(t_n)|$, so that
\begin{align}
& \tau _n \goto _{ n\to \infty  }\infty , \label{fDftandb} \\
& |x(t)| <  |x(\tau _n)|,\quad 0\le t<\tau _n,
\label{fDftand} \\
& |x(\tau _n)| \ge   \delta \tau _n. \label{fDftant}
\end{align}
Fix a function $\theta \in C^\infty ([0,\infty ))$ such that $0\le \theta \le 1$, $ |\theta '|\le 2$ and
\begin{equation*}
\theta (r)=
\begin{cases}
1 & 0\le r\le 1\\
0 & r\ge 2.
\end{cases}
\end{equation*}
Given $R>0$, set
\begin{equation*}
\theta _R(r)= \theta  \Bigl( \frac {r} {R} \Bigr).
\end{equation*}
One verifies easily that
\begin{equation} \label{fTdeter}
 |\theta _R( |x|)-1| +  |x|\,  |\theta _R ' ( |x|)| \le 5 \times 1 _{ \{  |x|>R \} }
 \quad  \text{and}\quad  |x| \theta _R( |x|) \le 2R.
\end{equation}
Let
\begin{equation*}
z_R(t) = \int  _{ \R^\Dim  } x \theta _R(  |x|) |u(t,x)|^2 dx.
\end{equation*}
Multiplying the equation~\eqref{SCH} by $x\theta _R  \overline{u} $, we obtain by  an easy calculation that
\begin{equation*}
z_R '(t)= 2\im \int  _{ \R^\Dim  }  \Bigl\{ \theta _R   \overline{u} \nabla u  + x \theta _R'  \overline{u} \frac {\partial u} {\partial r} \Bigr\} = 2\im \int  _{ \R^\Dim  }  \Bigl\{ (\theta _R -1)  \overline{u} \nabla u  +x \theta _R'  \overline{u} \frac {\partial u} {\partial r} \Bigr\},
\end{equation*}
where we used the property $P(u(t))=0$ (see~\eqref{fMomu}) in the last identity.
Applying~\eqref{fTdeter}, we deduce that
\begin{equation}  \label{fTdeterbu}
|z_R'(t)| \le 10 \int  _{ \{  |x|>R \} }  |u|\,  |\nabla u|\le 5\int  _{ \{  |x|>R \} } \{ |\nabla u|^2 +  |u|^2 \}.
\end{equation}
On the other hand, it follows from~\eqref{ep} that
 there exists $\rho >0$ such that
\begin{equation} \label{fTdeterbd}
\int_{ \{ |x + x(t)|> \rho  \} } \{  | \nabla  u(t) |^2 +  |u(t)|^2 \} \le
\frac{ \delta  M(\DI )}{10 (1+\delta )},
\end{equation}
for all $t\ge 0$. Set
\begin{equation}  \label{fTdeterbt}
R_n =  |x(\tau _n)| + \rho .
\end{equation}
Given $0\le t\le \tau _n$ and $ |x|>R_n$, we deduce from~\eqref{fDftand} and~\eqref{fTdeterbt} that
\begin{equation} \label{fTdeterbq}
|x + x(t)|\ge R_n -  |x(t)|\ge R_n-  |x(\tau _n)| =\rho .
\end{equation}
Applying~\eqref{fTdeterbu}, \eqref{fTdeterbq} and~\eqref{fTdeterbd}, we obtain
\begin{equation}\label{pt}
\begin{split}
|z_{R_n} '(t)| &\le \frac{ \delta  M(\DI )}{2 (1+\delta )},
\end{split}
\end{equation}
for all $n\ge 1$ and $0\le t\le \tau _n$.
Next, since $R_n\ge \rho $ and $x(0)=0$,
\begin{equation}\label{fz}
\begin{split}
|z_{R_n} (0)|&\le \int_{ \{ |x|<\rho \} } |x| \theta  _{ R_n } |\DI  |^2 +
\int_{ \{ |x | > \rho \} }  |x| \theta  _{ R_n } |\DI |^2 \\
&= \int_{ \{ |x|<\rho \} } |x| \,  |\DI  |^2  +
\int_{ \{ |x +x(0)| > \rho \} }  |x| \theta  _{ R_n } |\DI |^2 \\
&\le \rho M(\DI ) +  \frac{ \delta  M(\DI )}{5 (1+\delta )} R_n,
\end{split}
\end{equation}
where we used~\eqref{fTdeter} and~\eqref{fTdeterbd} the  in the last estimate.
We now estimate $z _{ R_n }(\tau _n)$ as follows.
\begin{equation} \label{fTdeterbc}
\begin{split}
z _{ R_n }(\tau _n) &= \int_{ \{ |x + x(\tau _n)|> \rho \} }  x \theta  _{ R_n } |u(\tau _n, x)|^2
+ \int_{ \{ |x + x(\tau _n)|< \rho \} }  x \theta  _{ R_n } |u(\tau _n, x)|^2 \\ & =
\mathrm{I} + \mathrm{II}.
\end{split}
\end{equation}
Applying~\eqref{fTdeter} and~\eqref{fTdeterbd}, we see that
\begin{equation} \label{fTdeterbs}
|\mathrm{I}| \le  \frac{ \delta  M(\DI )}{5 (1+\delta )} R_n.
\end{equation}
Next, we note that if $ |x + x(\tau _n)| < \rho $, then
 $|x|\le |x + x(\tau _n)| + |x(\tau _n)|\le \rho + |x(\tau _n)|= R_n$,
 so that $\theta  _{ R_n } ( |x|)=1$; and so
\begin{multline*}
\mathrm{II}
=  \int_{ \{ |x + x(\tau _n)|< \rho \} }  x  |u(\tau _n, x)|^2 \\
= \int_{ \{ |x + x(\tau _n)| < \rho \}} (x +
x( \tau _n )) |u(\tau _n, x)|^2  - x(\tau _n)\int_{ \{ |x + x(\tau _n)| <
\rho \} }|u(\tau _n,x)|^2 \\
= -  x(\tau _n) M(\DI ) + \int_{ \{ |x + x(\tau _n)| < \rho \}} (x +
x( \tau _n )) |u(\tau _n, x)|^2
 \\ + x(\tau _n)\int_{ \{ |x + x(\tau _n)| >
\rho \} }|u(\tau _n,x)|^2 .
\end{multline*}
Applying~\eqref{fTdeterbt} and~\eqref{fTdeterbd}, we deduce that
\begin{equation} \label{fTdeterbp}
|\mathrm{II}|\ge |x(\tau _n)|M(\DI ) - \rho M(\DI )- \frac{ \delta  M(\DI )}{10 (1+\delta )} R_n.
\end{equation}
Estimates~\eqref{fTdeterbc}, \eqref{fTdeterbs} and~\eqref{fTdeterbp} yield
\begin{equation} \label{ft}
|z_{R_n} (\tau _n)| \ge  |x(\tau _n)|M(\DI ) - \rho M(\DI )- \frac{ 3 \delta  M(\DI )}{10 (1+\delta )} R_n.
\end{equation}
We deduce from~\eqref{pt}, \eqref{ft} and~\eqref{fz}  that
\begin{equation*}
\begin{split}
\frac{ \delta  M(\DI )}{2 (1+\delta )} \tau _n & \ge \int _0^{\tau _n}  |z _{ R_n }' (t)|\, dt
\ge  |z _{ R_n } (\tau _n)| -   |z _{ R_n } (0)| \\ &
\ge  |x(\tau _n)|M(\DI ) - 2 \rho M(\DI )- \frac{ \delta  M(\DI )}{2 (1+\delta )} R_n
\end{split}
\end{equation*}
Finally, applying~\eqref{fTdeterbt}, we obtain
\begin{equation*}
\frac { |x(\tau _n)|} {\tau _n} \le \frac {\delta } {2+\delta }+ \frac {\rho } {\tau _n} \frac {4+5\delta } {2+\delta },
\end{equation*}
which, together with~\eqref{fDftant} and~\eqref{fDftandb},  yields a contradiction.

\medskip
\Step2 Conclusion. Since $\DI \not = 0$, we have
\begin{equation} \label{fEnerp}
E(\DI )>0,
\end{equation}
by Lemma~\ref{Cdelta}.
Fix a function $\chi \in C^\infty ([0,\infty ))$ such that
\begin{equation*}
\chi   (r)=
\begin{cases}
r^2,& \textrm{if } r \le 1, \\
0,& \textrm{if } r\ge2.
\end{cases}
\end{equation*}
Given $R \ge 1$, set
\begin{equation*}
\chi _R(r)= R^2 \chi    \Bigl( \frac{ r} {R} \Bigr) ,
\end{equation*}
so that
\begin{equation*}
\frac {\chi _R' (r)} {r}-2= \chi ''(r)- \frac {\chi _R' (r)} {r}= 2\Dim- \Delta \chi _R ( |x|) = \Delta ^2 \chi _R
( |x|)=0 \quad  \text{for }r\le R,
\end{equation*}
and
\begin{equation*}
\frac {1} {R}   \|\chi _R'\| _{ L^\infty  }+
 \Bigl\| \frac {\chi _R' } {r}\Bigr\| _{ L^\infty  } +  \|\chi _R''\| _{ L^\infty  }+  \|\Delta \chi _R
 ( |\cdot |)\| _{ L^\infty  }
+ R^2  \|\Delta ^2\chi _R ( |\cdot |)\| _{ L^\infty  } \le C,
\end{equation*}
for some constant $C$ independent of $R>0$.
Define
\begin{equation*}
Z_R(t) = \int  _{ \R^\Dim } \chi _R(  |x| ) |u(t,x)|^2dx.
\end{equation*}
It follows from Lemma~\ref{local virial} that $Z_R \in C^2([0,\infty ))$ and that
\begin{equation}\label{zp}
|Z_R'(t)|  \le C R \|u(t)\| _{ L^2  }  \|\nabla u(t)\| _{ L^2 }
 \le A R,
\end{equation}
for some constant $A$ independent of $t\ge 0$ and $R>0$.
Moreover,
\begin{equation} \label{fLiou}
Z_R ''(t) = 8\| \nabla  u(t) \|_{L^2}^2 - \frac{4 \Dim \Pmu}{\Ppu }\| u (t) \|_{L^{\Ppu }}^{\Ppu }
+ H_R (u(t)),
\end{equation}
where
\begin{equation} \label{fLiod}
|H_R (u(t))|\le B\int_{ \{ |x|\ge R \} } \{  | \nabla  u(t) |^2 + |u(t)|^{\Ppu }+ |u(t)|^2\} ,
\end{equation}
for some constant $B$ independent of $t\ge 0$ and $R>0$.
Set
\begin{equation*}
\eta = 8  \Bigl[ 1 -   \Bigl( \frac {E(\DI )M(\DI )^\Siwr } {E(Q)M(Q)^\Siwr } \Bigr)^{ \frac {\Dim \Pmu -4} {4} }  \Bigr] >0.
\end{equation*}
We deduce from Lemma~\ref{Cdelta} that
\begin{equation} \label{fLiot}
8 \| \nabla  u (t)\|_{L^2}^2 -\frac{4\Dim \Pmu}{\Ppu }\| u (t) \|_{L^{\Ppu }}^{\Ppu } \ge
\eta \|\nabla  u (t)\|_{L^2}^2 \ge 2 \eta E(u(t)) =  2 \eta E(\DI ).
\end{equation}
Since $E(\DI ) >0$
by~\eqref{fEnerp}, it follows from~\eqref{ep} that
 there exists $\rho \ge  1$ such that
\begin{equation} \label{fLioq}
\int_{ \{ |x + x(t)|\ge \rho \} } \{  | \nabla  u(t) |^2 + |u(t)|^{\Ppu }+ |u(t)|^2 \} \le
\frac{\eta E(\DI )}{B},
\end{equation}
for all $t\ge 0$, where $B$ is the constant in~\eqref{fLiod}.
Next, we deduce from~\eqref{fLocal}  that there exists $t_0>0$ such
that
\begin{equation} \label{fLioh}
|x(t)| \le \frac{\eta E(\DI )}{4A}   t,
\end{equation}
for $t\ge t_0$. Given $\tau >t_0$, set
\begin{equation} \label{fLios}
R_\tau  = \rho  + \frac{\eta E(\DI )}{4A}  \tau
\end{equation}
It follows easily from~\eqref{fLioh} and~\eqref{fLios}
that $\{|x|\ge R _\tau \}\subset\{
|x+x(t)|\ge \rho  \}$ for $t\in [t_0, \tau ]$, thus by~\eqref{fLiod} and~\eqref{fLioq}
\begin{equation} \label{fLioc}
|H  _{ R_\tau  }(u (t))|\le \eta E(\DI ),
\end{equation}
for all $t\in [t_0, \tau ]$. We deduce from~\eqref{fLiou}, \eqref{fLiot} and~\eqref{fLioc}
\begin{equation}\label{2p}
Z_{R_\tau }''(t) \ge \eta E(\DI ),
\end{equation}
for all $t\in [t_0, \tau ]$. Integrating \eqref{2p} on $(t_0,\tau )$
and applying~\eqref{zp} and~\eqref{fLios}   yields
\begin{equation}\label{le}
\begin{split}
\eta E(\DI )(\tau  - t_0) & \le \int_{t_0}^{\tau }Z_R''(t)dt \le
|Z_{R_\tau } '(\tau ) - Z_{R_\tau }'(t_0)| \\
&\le 2AR_\tau =2A  \Bigl( \rho  + \frac{\eta E(\DI )}{4A}  \tau  \Bigr).
\end{split}
\end{equation}
Letting $\tau \to \infty $ in~\eqref{le} yields a contradiction.
\end{proof}

\section{Appendix: a Gronwall-type inequality} \label{sGron}

\begin{lem}  \label{gronwall}
Let  $1\le \beta <\gamma \le \infty $,  $0<T\le \infty $ and let  $f\in L^\rho
(0,T )$, where  $1\le \rho <\infty $ is defined by  $\frac {1} {\rho }=
\frac {1} {\beta } - \frac {1} {\gamma }$. If  $\eta \ge 0$ and $\varphi \in
L^\gamma  _{\mathrm {loc} }([0,T )) $ satisfy
\begin{equation}  \label{fHyp}
 \|\varphi \| _{ L^\gamma (0,t) } \le \eta +  \|f\varphi \| _{ L^\beta (0,t) },
\end{equation}
for all  $0<t<T$, then
\begin{equation}  \label{fConcl}
 \|\varphi \|_{ L^\gamma (0,t) } \le \eta \Phi ( \| f \| _{ L^\rho (0,t) })
\end{equation}
for all  $0<t\le T$, where $\Phi (s)\equiv  2 \Gamma (3+2s)$ and
$\Gamma $ is the Gamma function.
\end{lem}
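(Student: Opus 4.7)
The plan is to reduce the hypothesis to a classical Gronwall-type iteration via H\"older's inequality, applied on a partition of $(0,t)$ into subintervals where the $L^\rho$-norm of $f$ is small enough to be absorbed.

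First I would apply H\"older's inequality on any subinterval $(s_1,s_2)\subset (0,T)$ to obtain
$$
 \|f\varphi\|_{L^\beta (s_1,s_2)} \le  \|f\|_{L^\rho (s_1,s_2)}\,  \|\varphi\|_{L^\gamma (s_1,s_2)},
$$
using the relation $\frac{1}{\rho}=\frac{1}{\beta}-\frac{1}{\gamma}$. Combined with the triangle inequality for the $L^\beta$-norm over disjoint intervals (valid since $\beta\ge 1$), this converts the hypothesis into an estimate that can be iterated across subintervals.

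Next, fix $0<t\le T$ and, by absolute continuity of the integral, choose a partition $0=t_0<t_1<\cdots <t_N=t$ with $\|f\|_{L^\rho (t_{i-1},t_i)} \le \tfrac{1}{2}$ on each piece. The number $N$ is controlled by $\|f\|_{L^\rho (0,t)}$ alone; more precisely $N\le (2\|f\|_{L^\rho (0,t)})^\rho +1$. Setting $a_k= \|\varphi\|_{L^\gamma (0,t_k)}/\eta$, the base step $a_1\le 2$ follows by absorbing the $\tfrac12 a_1$ term into the left-hand side. For the induction, applying the hypothesis at $t=t_k$ and splitting the $L^\beta$-norm over the partition gives
$$
a_k \le 1 + \frac{1}{2}\sum_{i=1}^{k-1} a_i + \frac{1}{2} a_k,
$$
after using $ \|\varphi\|_{L^\gamma (t_{i-1},t_i)}\le  \|\varphi\|_{L^\gamma (0,t_i)}$ for $i<k$ and $\le  \|\varphi\|_{L^\gamma (0,t_k)}$ for $i=k$. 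This recursion yields $a_k\le 2^k$, so $\|\varphi\|_{L^\gamma (0,t)}\le 2^N\eta$, which is an estimate of the required form $\eta\Phi( \|f\|_{L^\rho (0,t)})$ for some continuous nondecreasing $\Phi$ depending only on $\beta,\gamma$.

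The main obstacle is calibrating the constant to match the precise form $\Phi(s)=2\Gamma(3+2s)$ in the statement rather than merely producing \emph{some} finite function of $\|f\|_{L^\rho}$; this is a routine but slightly fiddly book-keeping exercise, where one optimizes the partition (choosing each piece to carry $L^\rho$-mass exactly equal to some $\kappa<1$) and then uses Stirling-type bounds to dominate the resulting product by a shifted Gamma function. A final monotone convergence argument lets one pass to the supremum over $t<T$ to obtain the conclusion for $t=T$ in the infinite horizon case. The case $\gamma=\infty$ is handled identically after noting that the triangle inequality $ \|\varphi\|_{L^\infty (A\cup B)}=\max( \|\varphi\|_{L^\infty (A)}, \|\varphi\|_{L^\infty (B)})$ only makes the induction easier.
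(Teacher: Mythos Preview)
Your approach is essentially the same as the paper's: partition into subintervals on which $\|f\|_{L^\rho}\le \tfrac12$, apply H\"older, and iterate. The only bookkeeping difference is that the paper applies H\"older once on the whole interval $(0,\tau_k)$ (using $\|f\|_{L^\rho(0,\tau_k)}\le k/2$) to obtain the recursion $a_{k+1}\le 2\eta + k\,a_k$, hence $a_k\le 2\eta\,(k+1)!$, rather than splitting over all subintervals as you do to get $a_k\le 2^k\eta$; this factorial is where the Gamma function in the statement comes from.

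Regarding your ``main obstacle'' of matching the exact form $\Phi(s)=2\Gamma(3+2s)$: do not worry about it. The paper's final step asserts $k-1\le 2\|f\|_{L^\rho(0,t)}$, but since $\|f\|_{L^\rho(0,\tau_{k-1})}=(k-1)^{1/\rho}/2$ this is only literally correct when $\rho=1$; for general $\rho>1$ one gets $k-1\le (2\|f\|_{L^\rho})^\rho$, the same $\rho$-dependent bound you found. In the sole application (Proposition~\ref{LTPT}) all that is required is \emph{some} finite nondecreasing $\Phi$, and both your argument and the paper's supply one.
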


\begin{proof}
Since  $f\in L^\rho (0,T)$, there exist  $\ell \le 2  \|f\| _{ L^\rho (0,T)
}$ and an increasing sequence  $(\tau _k) _{ 0\le k\le \ell }$ such
that  $\tau _0=0$,  $\tau _\ell= T$ and
\begin{equation}  \label{fDeftk}
 \|f\| _{ L^\rho (\tau  _{ k-1 },\tau _k) }=\frac {1} {2}  \text{ if } \ell \ge 2 \text{ and }   1\le k\le \ell-1 ,
 \quad  \|f\| _{ L^\rho (\tau  _{ \ell-1 },\tau _\ell) }\le \frac {1} {2}.
\end{equation}
Set  $a_0=0$ and  $a_k=  \|\varphi  \| _{ L^\gamma (0, \tau _k )}$ for
$1\le k\le \ell$. It follows from~\eqref{fHyp}, H\"older's
inequality and~\eqref{fDeftk}  that for all  $k\le \ell-1$
\begin{equation*}
\begin{split}
a  _{ k+1 } &\le \eta +  \|f\varphi \| _{ L^\beta (0,\tau _k) }+ \|f\varphi \| _{ L^\beta (\tau _k,\tau  _{ k+1 }) } \\
& \le \eta +  \|f \| _{ L^\rho (0,\tau _k) }  \|\varphi \| _{ L^\gamma (0,\tau
_k) } +
\|f \| _{ L^\rho (\tau _k,\tau  _{ k+1 }) } \|\varphi \| _{ L^\gamma (\tau _k,\tau  _{ k+1 }) } \\
& \le \eta + \frac {k} {2} a_k + \frac {1} {2} a _{ k+1 }.
\end{split}
\end{equation*}
Thus we see that  $a _{ k+1 } \le 2 \eta + ka_k$, so that  $a_k \le
2\eta (k+1)!$. Let  $0<t<T$ and   $1\le k\le \ell$ be such that
$\tau  _{ k-1 }\le t<\tau  _k$. It follows that
\begin{equation}   \label{fDeftkb}
 \|\varphi \| _{ L^\gamma  (0,t) }\le a_k \le 2\eta (k+1)!
\end{equation}
On the other hand, we deduce from~\eqref{fDeftk} that
\begin{equation*}
 \|f\| _{ L^\rho (0,t) } \ge  \|f\| _{ L^r\rho (0,\tau  _k)}
 \ge \frac {k} {2}-\frac {1} {2},
\end{equation*}
thus  $(k+1)! = \Gamma (k+2) \le \Gamma (3+ 2 \|f\| _{ L^\rho (0,t) })$.
The result now follows from~\eqref{fDeftkb}.
\end{proof}

\end{document}